\documentclass{amsart}

\usepackage{amsmath,amssymb,amsthm}
\usepackage{mathrsfs}
\usepackage{amscd}
\usepackage{enumerate}
\usepackage{pdfsync}

\usepackage{graphicx}


\setlength{\oddsidemargin}{.5in}
\setlength{\evensidemargin}{.5in}
\setlength{\textwidth}{5.6in}
\setlength{\textheight}{9in}
\setlength{\topmargin}{-0.50in}


\theoremstyle{plain}
\newtheorem{theorem}{Theorem}[section]
\theoremstyle{remark}
\newtheorem{remark}[theorem]{Remark}

\newtheorem{definition}[theorem]{Definition}
\theoremstyle{plain}
\newtheorem{corollary}[theorem]{Corollary}
\newtheorem{lemma}[theorem]{Lemma}
\newtheorem{proposition}[theorem]{Proposition}

\numberwithin{equation}{section}


\def\N{{\mathbb N}}

\def\R{{\mathbb R}}
\def\C{{\mathbb C}}

\def\mH{\mathcal{H}}
\def\mT{SIO}
\def\CZK{SK}
\def\mB{\mathcal{B}}
\def\mM{\mathcal{M}}

\usepackage{color}

\definecolor{gr}{rgb}   {0.,   0.8,   0. }
\definecolor{bl}{rgb}   {0.,   0.5,   1. }
\definecolor{mg}{rgb}   {0.7,  0.,    0.7}

\newcommand{\Bk}{\color{black}}

\makeatletter
\@namedef{subjclassname@2010}{
  \textup{2010} Mathematics Subject Classification}
\makeatother

\title{Singular integral operators on tent spaces}

\author{Pascal Auscher, Christoph Kriegler, Sylvie Monniaux, Pierre Portal}

\date{revised, March 15, 2012}

\subjclass
{47D06,  47A60,  35K22, 42B35, 42B20}
\thanks{The second author acknowledges financial support from the Karlsruhe House of Young Scientists (KHYS)}
\keywords{maximal regularity, tent spaces, singular integral operators, off-diagonal estimates}


\begin{document}

\maketitle

\begin{abstract}
We extend the recent results  concerning boundedness of the maximal regularity operator on tent spaces. This leads us to develop a singular integral operator theory on tent spaces. Such operators have operator-valued kernels.  A seemingly appropriate condition on the kernel is time-space decay measured by off-diagonal estimates {with various exponents}.
\end{abstract}

\section{Introduction}

Let $-L$ be a densely defined closed linear operator acting on $L^{2}(\R^{n})$ and generating
a bounded analytic semigroup $(e^{-tL})_{t \geq 0}$.
Consider the maximal regularity operator originally defined for $f \in L^2(\R_{+},dt; D(L))$, $\R_{+}=(0,+\infty)$, by the Bochner  integral
\begin{equation}
\label{eq:maxop}
\mathcal{M}_{L}f(t) = \int \limits _{0} ^{t} Le^{-(t-s)L}f(s)\, ds.
\end{equation}
This is an example of a singular integral operator with operator-valued kernel.
The bounded extension of this operator to $L^{2}(\R_{+},dt; L^2(\R^n))=L^2(\R^{n+1}_{+}, dtdx)$, $ \R^{n+1}_{+}= \R_{+}\times \R^n$, was established by
de Simon in \cite{desimon}. {The maximal regularity operator plays a crucial role in evolution equations, where its boundedness is used to deduce a priori estimates, which, in turn, can be used to solve non-autonomous and/or non linear problems (see the lecture notes \cite{kuw}). It has thus been the source of intense study, especially in the past 10 years, in $L^p$, and in Besov spaces. As part of the recent development of an evolution equation approach to boundary value problems
on the upper half-space with data in $L^2(\R^n)$, based on the functional calculus of appropriate Dirac operators, a weighted version of de Simon's theorem is proven in
\cite{aa} and \cite[Theorem~1.3]{aa2}, but can be essentially attributed to the earlier work \cite{hk} (see below).

\begin{theorem} \label{AA} With $L$ as above,    $\mathcal{M}_{L}$ extends to a bounded operator on $ L^{2}( \R^{n+1}_{+}, t^{\beta}dt dx)$ for all $\beta \in (-\infty, 1)$.
\end{theorem}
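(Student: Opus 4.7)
The plan is to split the integral defining $\mathcal{M}_{L}f(t)$ at $s=t/2$ into a \emph{far-diagonal} piece $I_{1}(t)=\int_{0}^{t/2}Le^{-(t-s)L}f(s)\,ds$ and a \emph{near-diagonal} piece $I_{2}(t)=\int_{t/2}^{t}Le^{-(t-s)L}f(s)\,ds$, and to estimate each in $L^{2}(\R^{n+1}_{+},t^{\beta}dtdx)$ by a different mechanism. For $I_{1}$, the analyticity bound $\|Le^{-(t-s)L}\|_{L^{2}\to L^{2}}\leq C/(t-s)\leq 2C/t$ on $s\in(0,t/2)$ gives, via Minkowski's inequality, $\|I_{1}(t)\|_{L^{2}}\leq (2C/t)\int_{0}^{t/2}\|f(s)\|_{L^{2}}\,ds$, and the desired weighted estimate then follows from the classical Hardy inequality
\[
\int_{0}^{\infty}t^{\beta-2}\Bigl(\int_{0}^{t}h(s)\,ds\Bigr)^{2}dt\leq C_{\beta}\int_{0}^{\infty}h(s)^{2}s^{\beta}\,ds,
\]
which holds precisely when $\beta<1$ (with the constant blowing up as $\beta\uparrow 1$). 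This is the source of the sharp threshold in the theorem.

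For $I_{2}$ the key observation is weight comparability on $(t/2,t)$. Setting $g(s):=s^{\beta/2}f(s)$ turns the weighted $L^{2}$-norm of $f$ into the unweighted norm of $g$, and the weighted estimate for $I_{2}$ becomes the $L^{2}(dt;L^{2}(\R^{n}))$-boundedness of
\[
Tg(t):=\int_{t/2}^{t}(t/s)^{\beta/2}Le^{-(t-s)L}g(s)\,ds.
\]
I would decompose $(t/s)^{\beta/2}=1+[(t/s)^{\beta/2}-1]$. The unit piece contributes $\mathcal{M}_{L}g(t)-\int_{0}^{t/2}Le^{-(t-s)L}g(s)\,ds$, and both terms are $L^{2}(dt;L^{2})$-bounded by de Simon's theorem and the $\beta=0$ instance of the argument for $I_{1}$ above. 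The remainder is an integral operator whose kernel, thanks to the mean-value estimate $|(t/s)^{\beta/2}-1|\leq C_{\beta}(t-s)/s$ valid for $s\in(t/2,t)$, has operator norm bounded by $C'_{\beta}/s$ on the strip $t/2<s<t$; since both $\int_{t/2}^{t}ds/s$ and $\int_{s}^{2s}dt/s$ are $O(1)$, Schur's test delivers $L^{2}(dt;L^{2})$-boundedness.

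The main obstacle is the remainder piece of $I_{2}$: neither a crude operator-norm Schur estimate (the bare $1/(t-s)$ is non-integrable across the diagonal) nor a direct reduction to de Simon's unweighted theorem (the weight factor $(t/s)^{\beta/2}$ genuinely depends on both $t$ and $s$) works by itself. The crucial mechanism is that $(t/s)^{\beta/2}-1$ vanishes to first order at $s=t$, exactly cancelling the $1/(t-s)$ singularity of $Le^{-(t-s)L}$ and turning the potentially singular integral operator into an absolutely convergent one. In this way only the unweighted de Simon estimate is actually invoked, and the full range $\beta\in(-\infty,1)$ is dictated solely by the Hardy inequality on the far-diagonal part.
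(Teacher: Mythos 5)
Your proof is correct. The paper establishes the result in the abstract form of Theorem \ref{AAabs}, valid for any $T\in\mT^+$ with kernel bound $\|K(t,s)\|\le C|t-s|^{-1}$, via a commutator argument: setting $\alpha=\beta/2$, one writes $t^{\alpha}Tf(t)=T(s^{\alpha}f)(t)+\int_0^t K(t,s)\,(t^{\alpha}-s^{\alpha})f(s)\,ds$, handles the first term by the unweighted ($\mH_0$) boundedness, and controls the commutator term by a single weighted Schur test on the scalar kernel $|t-s|^{-1}\,|t^{\alpha}-s^{\alpha}|\,s^{-\alpha}$ over all of $\{0<s<t\}$, which works precisely for $\alpha<1/2$, i.e.\ $\beta<1$. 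Your route uses the same two essential mechanisms---reduction to de Simon's unweighted theorem, and first-order vanishing of the weight discrepancy at $s=t$ cancelling the $|t-s|^{-1}$ singularity---but packages them differently: you split at $s=t/2$, dispose of the far-diagonal piece by the Muckenhoupt--Hardy inequality (which is where $\beta<1$ enters for you), and treat the diagonal strip by multiplicative conjugation with $s^{\beta/2}$ followed by a Taylor expansion of $(t/s)^{\beta/2}$, whereas the paper absorbs both regimes into one Schur test for the additive commutator (reserving the Hardy inequality for the separate assertion about Bochner integrability of the tail $T_2f$). Your version is a bit longer but more modular and makes transparent that the sharp threshold $\beta<1$ comes only from the Hardy inequality on the tail; like the paper's, it uses nothing about $L$ beyond analyticity of the semigroup (i.e.\ $\|Le^{-\tau L}\|\lesssim\tau^{-1}$) and the unweighted bound. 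The only point left implicit is the routine one of first proving the estimates on a dense class (say, compactly supported $D(L)$-valued $f$, where the integral representation is unambiguous) and then extending by density, which the paper spells out.
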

This was proven before in \cite{ps} for  $\beta \in [0,1)$ and a more general class of operators akin to the ones we introduce next, and then for  $\beta\in (-1, 1)$} in  \cite[Theorem 1.13]{hk} when $L$ has dense range. The range of $\beta$ is shown in \cite{aa} to be optimal. Values such as $\beta=-1$ and also an endpoint result for $\beta=1$ were central for applications to the boundary value problems in \cite{aa}.  It should be noted, however, that while the statement of \cite[Theorem 1.13]{hk} does not include the case $\beta=-1$, its proof via their Proposition 1.14 actually gives Theorem \ref{AA}.
%

The articles \cite{hk} and \cite{ps} actually prove weighted $L^p$ estimates for $1<p<\infty$ and show that weighted maximal regularity is equivalent to the unweighted one. However, the $L^p$ analogue of Theorem \ref{AA} needed in the applications we have in mind does not involve weighted $L^{p}( \R^{n+1}_{+})$ spaces for $p \neq 2$, but more appropriate spaces of functions on the upper half space $\R^{n+1}_{+}$.
Let us explain this fact.

Traditionally, an evolution problem of the form $u_{t}+Lu=g$, with initial value $u_{0}=f \in L^{p}(\R^{n})$, is seen as an ordinary differential equation for $L^{p}(\R^{n})$-valued functions. One assumes that $-L$ generates an analytic semigroup on $L^{p}(\R^{n})$, and looks for maximal regularity in spaces such as $L^{p}(\R_{+};L^{p}(\R^{n}))$.
However, if $L=-\text{div}A\nabla$ is a second order, divergence form elliptic operator on $\R^{n}$ with bounded measurable complex valued coefficients, $-L$ only generates an analytic semigroup on $L^{p}(\R^{n})$ for $p$ in an interval $(p_{-}(L),p_{+}(L))$ including $2$, but not always equal to $(1,\infty)$ (see \cite{memoir}).
In this range, maximal regularity results can be proven using the extrapolation method pioneered by Blunck and Kunstmann in \cite{bk}, and developed in \cite{memoir}. Outside of that range, however, maximal regularity in $L^{p}(\R^{n+1}_{+})$ spaces, weighted or not, cannot hold.
In this paper, we prove maximal regularity results on the (unweighted) tent space $T^{p,2,2}$ for all $p \in (\frac{n}{n+1},\infty]$ (see Proposition \ref{prop:divagrad} below),
even though, for small $p$, $-L$ does not even generate a $C_{0}$-semigroup on $L^{p}(\R^{n})$.

Moreover, even when $L=-\Delta$, the free evolution $(t,y) \mapsto e^{t \Delta} f(y)$ does not belong to $L^{p}(\R^{n+1}_{+})$  when $f\in L^p(\R^n)$. This can be compensated by assuming more regularity on $f$, or by using a weighted $L^{p}(\R^{n+1}_{+})$ space with an appropriate weight. However, when dealing with $L^p$ initial data (in boundary value problems, or evolution problems with rough data, for instance), it is desirable to use a norm of the heat extension $(t,y) \mapsto e^{t \Delta} f(y)$ that is equivalent to the $L^p$ norm of $f$ for $p \in (1,\infty)$, and to its $H^p$ norm for $p\in (0,1]$. Weighted $L^p(\R^{n+1}_{+})$ norms do not have this property, but classical harmonic analysis gives many different norms that do.


The one which is of interest to us is given by the following area integral:
$$
\|f\|_{p}\eqsim   \bigg(\ \int \limits _{\R^{n}}
\Bigl( \ \ \iint \limits  _{\R^{n+1}_{+}} \frac{1_{B(x,t^{\frac{1}{2}})}(y)}{t^{\frac{n}{2}}} \,
\bigl|\Delta e^{t\Delta}f(y)\bigr|^{2} \,tdtdy \Bigr)^{\frac{p}{2}}dx \bigg)^{\frac{1}{p}}.
$$
Such a characterisation of the $L^p$ (or $H^p$) norm of a function in terms of its heat extension originates from the work of Fefferman-Stein \cite{fs}.
In more {recent terminology}, this says that $\Delta e^{t\Delta}f$ belongs to a parabolic version of {one of the tent spaces introduced}  by Coifman-Meyer-Stein \cite{cms}.

Now, if one considers the ``mild solution'' $u$ of  $u_{t}-\Delta u=g$ and $u_{0}=0$, given formally  by the
integral formula $\int\limits_{0}^t e^{(t-s)\Delta} g(s)\, ds$, one is led to consider the  boundedness  of the maximal regularity operator $\mathcal{M}_{-\Delta}$ in the norm above. Having such a priori estimates in the same space as the free evolution {$(t,y) \mapsto e^{t\Delta}f(y)$} is a first step towards solving, for example, non-linear problems with $L^p$ data.  Remark that this solution space has, a priori, nothing to do with the space of continuous functions of $t$ with values in $L^p$.
{We thus depart from the tradition of looking at evolution problems for functions on $\R^{n+1}_{+}$ as Banach space valued ODE, and work on spaces where the time and space variables are intrinsically linked.
}
We refer to   { \cite{anp}, and the forthcoming \cite{amp2},}  for more on the PDE aspect of such questions via a tent space approach. We just mention here that this idea goes back (at least) to Koch-Tataru's work on Navier-Stokes equations \cite{kt}.

We introduce the alluded variants of the tent spaces as follows. For $0<p<\infty$, $m\in \N^*$, $\beta\in \R$, define the tent space $T^{p,2,m}(t^\beta\, dtdy)$ as the space of all locally square integrable functions on $ \R^{n+1}_+$  such that
$$
\|g\|_{T^{p,2,m}(t^{\beta}dt dy)} = \bigg(\ \int \limits _{\R^{n}}
\Bigl( \ \ \iint \limits  _{\R^{n+1}_{+}} \frac{1_{B(x,t^{\frac{1}{m}})}(y)}{t^{\frac{n}{m}}} \,
\bigl|g(t,y)\bigr|^{2} t^{\beta}\,dtdy \Bigr)^{\frac{p}{2}}dx \bigg)^{\frac{1}{p}}<\infty.
$$
The classical case is $\beta=-1$, $m=1$, in which case, the space is simply denoted by $T^{p,2}$.
{Since $\|g\|_{T^{p,2,m}(t^{\beta}dt dy)} = \|\tilde{g}\|_{T^{p,2}}$, where $\tilde{g}(s,y) = \sqrt m\, g(s^{m},y)s^{\frac{m(\beta+1)}{2}}$,
${T^{p,2,m}(t^{\beta}dt dy)}$ is  isometric to $T^{p,2}$.}
{However, the parameter $m$ is needed to handle different homogeneities (corresponding to differential operators of different orders), and the parameter $\beta$ is used to handle different applications (e.g. different degree of smoothness for initial data in evolution problems).}
We also remark that a simple use of Fubini's theorem shows that
$\|g\|_{T^{2,2,m}(t^{\beta}dt dy)}^2 = b_{n} \|g\|^2_{L^{2}( \R^{n+1}_+, t^{\beta}dt dy)}$,  whatever the parameter $m$ is, with $b_{n}$ being the  volume of the Euclidean unit ball.  Therefore, for $p=2$, tent spaces agree with  weighted $L^2$ spaces. But it is easy to show that it is not true when $p\ne 2$.

The nature of the norm (a quasi-norm when $p<1$), makes local square integrability {a requirement}. As already showed in \cite{amr} (and subsequently in \cite{hnp}) for different types of operators, a pertinent notion for boundedness of the maximal regularity operator on tent spaces  is a measure of decay of the semigroup called ($L^2-L^2$) off-diagonal estimates.

\begin{definition}\label{def:offdiagonal}
A family of bounded linear operators $(T_{t})_{t \geq 0} \subset B(L^{2}(\R ^{n}))$ is said to
satisfy off-diagonal estimates of order $M$, with homogeneity $m\in \N^*$, if, for all Borel sets
$E,F \subset \R^{n}$, all $t>0$, and all $f \in L^{2}(\R^{n})$:
$$
\|1_{F}T_{t}1_{E}f\|_{2} \lesssim \Big(1+\frac{dist(E,F)^{m}}{t}\Big)^{-M}\|1_{E}f\|_{2}.
$$
Here, and in what follows, $\|\cdot \|_{2}$ denotes the norm in $L^2(\R^n)$.
\end{definition}
{
This property is a replacement for pointwise kernel estimates, which is satisfied, for instance, by heat semigroups generated by elliptic operators with rough coefficients. {Note that we allow a polynomial decay.}}

With  the definition above, the following result  was proved in \cite{amp}.

\begin{theorem}
\label{thm:main}
Let $m \in \N^*$, $\beta \in (-\infty,1)$,
 $p \in \bigl(\frac{2n}{n+m(1-\beta)},\infty\bigr) \cap (1,\infty)$, and $\tau=\min(p,2)$. If
 $(tLe^{-tL})_{t \geq 0}$ satisfies off-diagonal estimates of order
$M>\frac{n}{m\tau}$, with homogeneity $m$,  then $\mathcal{M}_{L}$ extends to a bounded operator on
$T^{p,2,m}(t^{\beta}dt dy)$.
\end{theorem}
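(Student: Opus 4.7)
The plan combines a tent space atomic argument for $p \leq 2$ with interpolation (or duality) for $p > 2$, anchored at the $L^2$ endpoint. At $p = 2$, the Fubini remark following the definition of $T^{p,2,m}(t^\beta dt dy)$ identifies this space with $L^2(\R^{n+1}_+, t^\beta dt dy)$ up to a constant, so Theorem \ref{AA} already provides the required boundedness. The isometry $g \mapsto \tilde g$ recorded just after the definition reduces, when convenient, to the classical tent space $T^{p,2}$ via $s \mapsto s^m$, conjugating $\mathcal{M}_L$ into an operator whose kernel inherits off-diagonal bounds of the same order with the weight parameter transformed in an explicit way.

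\textbf{Atomic estimate for $p \leq 2$.} For $p \in \bigl(\frac{2n}{n+m(1-\beta)}, 2\bigr]$ I would invoke the Coifman--Meyer--Stein atomic decomposition of tent spaces: any $f \in T^{p,2,m}(t^\beta dt dy)$ splits as $f = \sum_j \lambda_j a_j$ with $\sum_j |\lambda_j|^p \lesssim \|f\|_{T^{p,2,m}(t^\beta dt dy)}^p$, each $a_j$ being supported in a parabolic tent over a ball $B_j$ and normalised by $\|a_j\|_{L^2(t^\beta dt dy)} \lesssim |B_j|^{1/2 - 1/p}$. It thus suffices to establish $\|\mathcal{M}_L a\|_{T^{p,2,m}(t^\beta dt dy)} \lesssim 1$ uniformly for atoms $a$ over $B = B(x_B, r_B)$. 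I would decompose $\mathcal{M}_L a = \sum_{k \geq 0} 1_{C_k} \mathcal{M}_L a$ with $C_0 = 4B$ and $C_k = 2^{k+2}B \setminus 2^{k+1} B$ for $k \geq 1$, then show that each piece is a scalar multiple $c_k$ of a $T^{p,2,m}(t^\beta dt dy)$-atom over $2^{k+2}B$. The piece $k = 0$ is handled by the $L^2$-boundedness of $\mathcal{M}_L$ from Theorem \ref{AA}. For $k \geq 1$, writing $L e^{-(t-s) L} = (t-s)^{-1} \cdot (t-s) L e^{-(t-s) L}$ and invoking the off-diagonal estimate of order $M$ gives
\begin{equation*}
\|1_{C_k} \mathcal{M}_L a(t, \cdot)\|_{L^2(\R^n)} \lesssim \int_0^{\min(t, r_B^m)} \frac{1}{t-s} \Big(1 + \frac{(2^k r_B)^m}{t - s}\Big)^{-M} \|a(s, \cdot)\|_{L^2(\R^n)}\, ds,
\end{equation*}
after which time integration against $t^\beta dt$ (using $\beta < 1$ to tame both the singularity at $s=t$ and the polynomial tail as $t \to \infty$) combined with Cauchy--Schwarz in $s$ and the atom normalisation produces a bound of the form $c_k \lesssim 2^{-km(M - \alpha)}$ with an explicit $\alpha = \alpha(n,m,p,\beta)$. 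The series $\sum_k c_k$ converges precisely under the stated hypotheses $M > n/(m\tau)$ and $p > 2n/(n+m(1-\beta))$.

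\textbf{Range $p > 2$ and main obstacle.} For $p > 2$ the lower bound on $p$ is automatic (since $\beta < 1$ forces $\frac{2n}{n+m(1-\beta)} < 2$) and $\tau = 2$ yields the weakest requirement $M > n/(2m)$. Boundedness in this range is obtained either by complex interpolation between the $L^2$ endpoint and a tent--BMO endpoint at $p = \infty$, or by duality with the small-$p$ range applied to a formally adjoint operator sharing the same off-diagonal structure. The main technical obstacle of the proof is the annular step for $p \leq 2$: one has to simultaneously control the spatial decay $2^{-kmM}$ from the off-diagonal estimates, the volume growth $2^{kn}$ of the annuli $C_k$, and the time integration of the weight $t^\beta$ on the natural scale $t \sim (2^k r_B)^m$. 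The precise lower bound $p > 2n/(n+m(1-\beta))$ in the statement encodes exactly the balance between these three effects, and carrying it through sharply---in particular for $m > 1$ and for $\beta$ close to $1$---is what drives the technicalities.
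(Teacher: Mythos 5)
Your overall architecture (anchor at $p=2$ via Theorem \ref{AA}, atoms plus off-diagonal decay for small $p$, a Carleson/$p=\infty$ endpoint or duality for $p>2$) is the one the paper uses for its stronger Theorem \ref{thm:lql2abs} (Theorem \ref{thm:main} itself is only quoted from \cite{amp}, where it is proved by change of aperture). But there is a genuine gap in the range $1<p<2$. Atomic decompositions of tent spaces exist only for $p\le 1$, as recalled in Section 3, so you cannot write a general $f\in T^{p,2,m}(t^\beta dtdy)$ with $1<p\le 2$ as $\sum\lambda_j a_j$ with atoms $a_j$. The standard remedy --- prove a uniform atomic estimate at some $p_0\le 1$ and interpolate with $p=2$ --- is not available here in general: the critical index $\tilde p_c=\frac{2n}{n+m(1-\beta)}$ is $\ge 1$ as soon as $n\ge m(1-\beta)$ (e.g.\ $n=3$, $m=1$, $\beta=0$ gives $\tilde p_c=3/2$), and in that regime the atomic estimate genuinely fails for every $p_0\le 1$, so no interpolation from below $1$ can reach $(\tilde p_c,2)$. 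The paper's substitute is Stein interpolation for the analytic family $\mathcal{J}_\alpha f(t,y)=\int_0^{t/2}(s/t)^\alpha (K(t,s)f(s,\cdot))(y)\,ds$, bounded on $T^{1,2,m}(t^\beta dtdy)$ for $\Re e\,\alpha$ large and on $T^{2,2,m}(t^\beta dtdy)$ for $\Re e\,\alpha>\frac{\beta-1}{2}$; this idea (or the change-of-aperture machinery of \cite{amp}) is the missing ingredient, and it is not a routine fix.

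A second, related problem is that your pieces $1_{C_k}\mathcal{M}_L a$ are not multiples of atoms: an atom over $2^{k+2}B$ must be supported in the tent $(0,(2^{k+2}r_B)^m]\times 2^{k+2}B$, whereas $\mathcal{M}_L a(t,\cdot)=\int_0^{\min(t,r_B^m)}Le^{-(t-s)L}a(s)\,ds$ does not vanish for large $t$, so a cut in time is also required (this is exactly the double indexation $f_j$ in the proofs of Lemmas \ref{lem:m1} and \ref{lem:m2}). Moreover, on the large-time blocks $2^{km}\le t<2^{(k+1)m}$, $|y|<2^{k+1}r_B$, the off-diagonal decay gives nothing (the spatial distance can be zero) and the only available bound $\|K(t,s)\|\lesssim t^{-1}$ integrated against $t^\beta dt$ yields $2^{-km(1-\beta)}$ per block; matching this against the atom normalisation $2^{-kn(\frac2p-1)}$ is what produces the lower bound $p>\frac{2n}{n+m(1-\beta)}$. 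Your closing paragraph attributes the constraint to the balance of $2^{-kmM}$ against the volume growth $2^{kn}$ in the spatial annuli, which locates the difficulty in the wrong place: as the paper remarks after Theorem \ref{thm:lql2abs}, the restrictive conditions come from the tail operator $T_2$ (small $s$, large $t$), not from the singular annular part. The $p\ge 2$ half of your plan (a direct $T^{\infty,2,m}$ estimate as in Theorem \ref{thm:infini} plus interpolation, or duality) is sound.
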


The surprise is to obtain results for $p<2$. {
This is particularly true in applications to stochastic parabolic PDEs. Results in this context have been developed in parallel to this article in \cite{anp}, which contains lighter versions of some of the material presented here. In the present paper}
we concentrate more on the abstract theory, and try to weaken  assumptions as much as possible.  This is important even for maximal regularity operators, see Section \ref{sec:max}.

An end-point result, for $p=\infty$, was also obtained in \cite{amp}. In this context, the appropriate tent space consists of
functions such that $|g(t,y)|^{2}\frac{dtdy}{t}$ is a Carleson measure, and is defined as the space of all locally square integrable functions  such that
$$
\|g\|_{T^{\infty,2}}^2=
\underset{(x,r) \in \R^{n} \times \R_{+}}{\sup}
r^{-n} \int \limits _{B(x,r)} \int \limits _{0} ^{r} |g(t,y)|^{2} \frac{ dtdy}{t}<\infty.
$$
The weighted version {(defined through a change of variable as above) is given by}
$$
\|g\|_{T^{\infty,2,m}(t^{\beta}dtdy)} ^{2} :=
\underset{(x,r) \in \R^{n} \times \R_{+}}{\sup}
r^{-{n}} \int \limits _{B(x,r)} \int \limits _{0} ^{r^m} |g(t,y)|^{2} t^{\beta} \,dtdy.
$$

\begin{theorem}
\label{thm:infini}
Let $m \in \N^*$ and $\beta \in (-\infty,1)$.
If $(tLe^{-tL})_{t \geq 0}$ satisfies off-diagonal estimates of order $M>\frac{n}{2m}$, with
homogeneity $m$, then $\mathcal{M}_{L}$
extends to a bounded operator on $T^{\infty,2,m}(t^{\beta}dtdy)$.
\end{theorem}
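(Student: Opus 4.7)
The plan is to verify the $T^{\infty,2,m}(t^\beta dtdy)$ bound directly via a Carleson-box test. Fix a ball $B_0 = B(x_0, r) \subset \R^n$; the goal is
\[
r^{-n} \int_{B_0} \int_0^{r^m} |\mathcal{M}_L f(t,y)|^2 t^\beta \, dt \, dy \lesssim \|f\|_{T^{\infty,2,m}(t^\beta dtdy)}^2,
\]
uniformly in $(x_0, r) \in \R^n \times \R_+$. Since $\mathcal{M}_L$ is of Volterra type, only the values of $f$ on $s \in (0, r^m)$ enter the right-hand side, so I may assume $f$ is supported in $\R^n \times (0, r^m)$ and decompose $f = f_0 + \sum_{j \geq 1} f_j$ with $f_0 = 1_{4 B_0} f$ and $f_j = 1_{C_j} f$ for $C_j = 2^{j+2} B_0 \setminus 2^{j+1} B_0$, treating local and non-local pieces separately.

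For the local piece $f_0$, I would apply the $L^2(\R^{n+1}_+, t^\beta dt dy)$-boundedness of $\mathcal{M}_L$, which is the $p = 2$ case of Theorem \ref{thm:main} (whose hypotheses $M > n/(2m)$ and $\beta < 1$ coincide with ours). Combined with the Carleson characterization applied over the ball $4 B_0$, which yields $\int_{4 B_0}\int_0^{(4r)^m} |f|^2 t^\beta \, dt \, dy \lesssim r^n \|f\|_{T^{\infty,2,m}(t^\beta dtdy)}^2$, this settles the $j = 0$ contribution.

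For each annular piece $f_j$ with $j \geq 1$, Minkowski's inequality together with the off-diagonal estimate of order $M$ applied to $(t-s) L e^{-(t-s)L}$ (with $E = C_j$, $F = B_0$, so $d(E,F) \gtrsim 2^j r$, and using $t-s \leq r^m \leq (2^j r)^m$) gives
\[
\|1_{B_0} \mathcal{M}_L f_j(t,\cdot)\|_{L^2} \lesssim (2^j r)^{-mM} \int_0^t (t-s)^{M-1} \|f_j(s,\cdot)\|_{L^2}\, ds.
\]
A weighted Hardy-type inequality, obtained from Cauchy--Schwarz applied to the splitting $(t-s)^{M-1} g(s) = [(t-s)^{(M-1)/2} s^{-\beta/2}]\,[(t-s)^{(M-1)/2} s^{\beta/2} g(s)]$ (the restriction $\beta<1$ enters precisely as the condition that $\int_0^t (t-s)^{M-1} s^{-\beta} ds = t^{M-\beta} B(M,1-\beta)$ be finite), then yields
\[
\int_0^{r^m} t^\beta \Big( \int_0^t (t-s)^{M-1} g(s) \, ds \Big)^2 dt \lesssim r^{2mM} \int_0^{r^m} s^\beta g(s)^2 \, ds.
\]
Plugging in $g(s) = \|f_j(s,\cdot)\|_{L^2}$ and using the Carleson characterization over $2^{j+2} B_0$ to bound $\int_0^{r^m}\int_{C_j} |f|^2 s^\beta \, ds\,dy \lesssim (2^j r)^n \|f\|_{T^{\infty,2,m}(t^\beta dtdy)}^2$ produces
\[
r^{-n} \int_{B_0} \int_0^{r^m} |\mathcal{M}_L f_j|^2 t^\beta \, dt \, dy \lesssim 2^{j(n - 2mM)} \|f\|_{T^{\infty,2,m}(t^\beta dtdy)}^2.
\]

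The resulting geometric series $\sum_j 2^{j(n - 2mM)}$ converges precisely under the hypothesis $M > n/(2m)$; summing and taking the supremum over $(x_0, r)$ completes the proof. The main technical step is the weighted Hardy-type inequality above: picking the correct Cauchy--Schwarz splitting so that the scaling $T^{2M}$ comes out cleanly and the $t^\beta$ factor on the left is balanced by $s^\beta$ on the right is the delicate point, and this is where the restriction $\beta<1$ surfaces as the natural sharp condition.
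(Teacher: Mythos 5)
Your argument is correct and follows the same route as the paper's proof of Proposition \ref{prop:adjointdual} (which contains Theorem \ref{thm:infini} as the case $q=2$): a Carleson-box test, an annular decomposition of $f$, the weighted $L^2$ bound for the local piece, and off-diagonal decay combined with $d(C_j,B_0)^m \gtrsim (2^jr)^m \geq t-s$ for the far pieces, summed under $M>\frac{n}{2m}$. The only cosmetic differences are that you absorb the $s$-integral into a single Cauchy--Schwarz/Beta-function (Hardy-type) estimate where the paper decomposes dyadically in $s$, and that the kernel representation of $1_{B_0}\mathcal{M}_L f_j$ for $j\geq 1$ should, as in the paper, be justified via Lemma \ref{lem:rep} (or density of $D(L)$), which your absolute-convergence estimates indeed permit.
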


Note that the backward maximal regularity operator
$$
\mathcal{M}_{L}^-f(t) = \int \limits _{t} ^{\infty} Le^{-(s-t)L}f(s)\, ds,
$$
can be studied on tent spaces, either by duality as $\mathcal{M}_{L}^-= (\mathcal{M}_{L^*})^*$, or directly.

Here, {we continue the development of such tent space boundedness} {results,} and we obtain three-fold improvements. The main statements are in the core of the article. We give here our motivation and extract sample {new} results as  illustrations.

The first observation is that  the conclusion of Theorem \ref{thm:main} is far from optimal in concrete situations.
For instance, for $-\Delta$ (heat semigroup), and its square root  $\sqrt{-\Delta}$ (Poisson semigroup), or even  $-\Delta +V$ with $V\in L^1_{loc}(\R^n)$, $V\ge 0$, and its square root,  or  $ -{\rm div} \,A\nabla$  a second order divergence form elliptic operator on $\R^n$ with bounded,
measurable, \emph{real}-valued coefficients, and  its square root,  the range of $p$ can be much improved.

\begin{proposition}
\label{cor:Laplace}
\begin{enumerate}
  \item $\mathcal{M}_{-\Delta +V}$ and   $\mM_{-{\rm div} \,A\nabla}$ (with real-valued coefficient {matrix} $A$) extend to bounded operators  on
 $T^{p,2,2}(dtdy)$ when $\frac{n}{n+1}<p\le \infty$.
  \item  $\mathcal{M}_{\sqrt{-\Delta +V}}$ and $\mM_{\sqrt{-{\rm div} \,A\nabla}}$ (with real-valued coefficient {matrix} $A$)  extend   to  bounded operators on
 $T^{p,2,1}(t^{-1} dtdy)$ when $\frac{n}{n+1}<p\le \infty$.

\end{enumerate}
    \end{proposition}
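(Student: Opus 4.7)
The plan is to deduce Proposition \ref{cor:Laplace} from strengthened versions of Theorems \ref{thm:main} and \ref{thm:infini}, which exploit the pointwise Gaussian or Poisson kernel bounds available for the four operators in order to push the lower endpoint on $p$ from $\frac{2n}{n+m(1-\beta)}$ down to the Hardy endpoint $\frac{n}{n+1}$.

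The first step is to record the pointwise bounds. For $L=-\Delta+V$ with $V\geq 0$ and $V\in L^1_{loc}$, the Feynman--Kac representation yields the domination $0\leq p^L_t(x,y)\leq p^{-\Delta}_t(x,y)$ of heat kernels, and Cauchy's formula for the bounded analytic extension of the semigroup transfers the Gaussian bound to the kernel of $tLe^{-tL}$. For $L=-\operatorname{div} A\nabla$ with real matrix $A$, Aronson's theorem gives the analogous Gaussian upper bound, again transferred to $tLe^{-tL}$ via contour integration in a sector. In both cases $(tLe^{-tL})_{t>0}$ satisfies $L^q$-$L^2$ off-diagonal estimates of arbitrary polynomial order, with homogeneity $m=2$, for every $q\in[1,2]$. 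For the square-root operators, the subordination formula
$$
e^{-t\sqrt{L}}f = \int_0^\infty \frac{t}{2\sqrt{\pi}\,s^{3/2}}\,e^{-t^2/(4s)}\,e^{-sL}f\,ds
$$
and its time derivative transport these Gaussian bounds into Poisson-type pointwise bounds on $e^{-t\sqrt{L}}$ and $t\sqrt{L}\,e^{-t\sqrt{L}}$, yielding off-diagonal estimates with $m=1$ of any polynomial order in the same $L^q$-$L^2$ scale.

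Next I would feed these estimates into the tent-space boundedness machinery. In the range covered by Theorem \ref{thm:main} the $L^2$-$L^2$ hypothesis suffices, while the endpoint $p=\infty$ is handled directly by Theorem \ref{thm:infini}. For the remaining range $\frac{n}{n+1}<p\leq \frac{2n}{n+m(1-\beta)}$ I would invoke the enhanced version of Theorem \ref{thm:main} developed in the body of the paper, which exploits $L^q$-$L^2$ off-diagonal estimates with $q<2$, combined with an atomic/molecular decomposition of $T^{p,2}$ for $p\leq 1$ and a uniform bound on the image of a tent atom. The parameters $(m,\beta)=(2,0)$ cover part (1) and $(m,\beta)=(1,-1)$ cover part (2).

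The main obstacle will be closing the gap below $p=1$. Reaching the Hardy endpoint $\frac{n}{n+1}$ cannot be achieved within the bare $L^2$-$L^2$ theory of Theorem \ref{thm:main}, and genuinely requires both the pointwise Gaussian/Poisson bounds established in the first step and a careful verification that $\mathcal{M}_L$ maps $T^{p,2}$-atoms to summable molecules, with constants uniform across the four operators and across the two homogeneities $m=1,2$. This is precisely the task that the refined tent-space theorems in the body of the paper are designed to handle, and reducing Proposition \ref{cor:Laplace} to a checking of their hypotheses is what the above steps accomplish.
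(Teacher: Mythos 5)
Your overall strategy (pointwise Gaussian/Poisson kernel bounds $\Rightarrow$ $L^q$--$L^2$ off-diagonal decay $\Rightarrow$ the refined tent-space theorems of the paper) is the same as the paper's, and for part (1) it works: the heat kernels have Gaussian decay, hence $L^1$--$L^2$ and $L^2$--$L^\infty$ off-diagonal estimates of order $\infty$ with $m=2$, and with $q=1$, $\beta=0$ one computes $\tilde p_{c}=\frac{2n}{2n+2}=\frac{n}{n+1}$ in Theorem \ref{thm:lql2abs}(2), while Proposition \ref{prop:adjointdual} (or Theorem \ref{thm:infini}) gives $2\le p\le\infty$.

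There is, however, a genuine error in your treatment of part (2). You assert that subordination transports the Gaussian bounds into off-diagonal estimates ``of any polynomial order'' for $e^{-t\sqrt L}$ and $t\sqrt L\,e^{-t\sqrt L}$. This is false: the Poisson-type kernel $t(t^2+|x-y|^2)^{-\frac{n+1}{2}}$ yields only a \emph{finite} polynomial order, namely $M=\frac n2+1$ in the $L^1$--$L^2$ (and $L^2$--$L^\infty$) scale with $m=1$, and only $M=1$ in the $L^2$--$L^2$ scale. The entire difficulty of part (2) — and the reason the paper tracks the two competing exponents $p_{M}$ and $\tilde p_{c}$ — is that this finite order is \emph{just barely} sufficient: with $M=\frac n2+1$, $m=1$, $q=1$, $\beta=-1$ one finds $p_{M}=\frac{n}{n+1}=\tilde p_{c}$, so $\sup(p_M,\tilde p_c)=\frac{n}{n+1}$. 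By claiming arbitrary order you have assumed away the main point to be checked. Relatedly, your plan to handle $p=\infty$ ``directly by Theorem \ref{thm:infini}'' fails for part (2): that theorem requires $L^2$--$L^2$ off-diagonal estimates of order $M>\frac{n}{2m}=\frac n2$, whereas the Poisson semigroup only has $M=1$ there, so for $n\ge 2$ its hypothesis is not met. The range $2\le p\le\infty$ in part (2) requires the dual-scale $L^2$--$L^q$ decay with $q\ge 2$ (here $q=\infty$, order $\frac n2+1>\frac n2$) fed into Proposition \ref{prop:adjointdual}; your proposal never invokes the $L^2$--$L^q$ scale at all. With these two corrections — quantifying the Poisson decay as $M=\frac n2+1$ and verifying $p_M=\tilde p_c=\frac{n}{n+1}$, and using the $L^2$--$L^\infty$ estimates together with Proposition \ref{prop:adjointdual} for $p\ge 2$ — your argument becomes the paper's proof.
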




 {The range of $p$ is a consequence of the pointwise decay of the corresponding heat kernels.}
 However, not all semigroups obey pointwise decay.  In that case, one can use intermediate conditions  between pointwise decay and $L^2-L^2$ off-diagonal estimates such as     $L^q-L^r$ off-diagonal estimates with $q\le r$ and $q=2$ or $r=2$    (see Definition \ref{def:qroff}). This information can {then} be used to quantify the range of $p$ {for tent space boundedness}.
This is the case for  $ -{\rm div} \,A\nabla$ with  complex-valued coefficients.  Here, the decay is Gaussian but the range of $q$ or $r$ may be limited as dimension increases.

\begin{proposition}\label{prop:divagrad}  For a complex-valued coefficient {matrix} $A$,   $\mM_{-{\rm div} \,A\nabla}$   extends to a bounded operator on
 $T^{p,2,2}(dtdy)$ when $\frac{1}{2}<p\le \infty$ if $n=1$,  $\frac{2}{3}<p\le \infty$ if $n=2$,  $\frac{6}{7}-\varepsilon<p\le \infty$ if $n=3$, and $2-\frac{4 }{n} -\varepsilon<p\le \infty$ if $n\ge 4$.
 The $\varepsilon >0$ depends on the operator but the lower bound is at least $\frac{n}{n+1}$.
\end{proposition}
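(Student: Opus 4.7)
The plan is to apply the abstract tent-space boundedness criterion developed later in the paper---a quantitative refinement of Theorem \ref{thm:main} that takes $L^q$-$L^2$ off-diagonal estimates of $(tLe^{-tL})_{t\geq 0}$ as input in place of the $L^2$-$L^2$ ones---to $L = -\operatorname{div}\,A\nabla$ with complex coefficient matrix $A$.

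First I would recall, from \cite{memoir}, that there is an open interval $\mathcal{J}(L) = (p_-(L), p_+(L))$ containing $2$ such that $(e^{-tL})_{t\geq 0}$ (hence also $(tLe^{-tL})_{t\geq 0}$, by analyticity) satisfies $L^q$-$L^r$ off-diagonal estimates of arbitrary polynomial order, with homogeneity $m = 2$, for all $q, r \in \mathcal{J}(L)$ with $q \leq r$. For uniformly elliptic complex-coefficient divergence-form operators, the memoir gives $\mathcal{J}(L) = (1, \infty)$ in dimensions $n = 1, 2$ (by arguments specific to those dimensions), $\mathcal{J}(L) \supset (6/5 - \varepsilon, 3 + \varepsilon)$ in dimension $n = 3$, and $\mathcal{J}(L) \supset (2n/(n+2) - \varepsilon, 2n/(n-2) + \varepsilon)$ for some $\varepsilon > 0$ in dimensions $n \geq 4$ (by Sobolev embedding together with Blunck--Kunstmann extrapolation).

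Next I would feed a value of $q$ just above $p_-(L)$ into the abstract theorem; for $m = 2$, $\beta = 0$, the resulting threshold on $p$ matches, case by case, the stated lower endpoints $n/(n+1)$ (with $q = 1$ when $n \leq 2$), $6/7 - \varepsilon$ (with $q = 6/5 - \varepsilon$ when $n = 3$), and $2 - 4/n - \varepsilon$ (with the appropriate choice of $q$ when $n \geq 4$). The endpoint $p = \infty$ is handled separately by Theorem \ref{thm:infini}, whose only hypothesis is the $L^2$-$L^2$ Gaussian off-diagonal estimates, which hold for free.

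The main obstacle is not in this derivation itself but in its two ingredients: on the one hand, proving the quantitative refinement of Theorem \ref{thm:main} (i.e., tracking the precise dependence of the tent-space threshold on $q$ in an atomic-decomposition argument where the off-diagonal exponent now controls the far-diagonal contribution), which is the main technical content of the paper; on the other hand, verifying that \cite{memoir} supplies $L^q$-$L^r$ off-diagonal estimates throughout $\mathcal{J}(L)$, and not merely $L^p$-boundedness of the semigroup, which goes through by standard Blunck--Kunstmann extrapolation. Given those two inputs, Proposition \ref{prop:divagrad} follows essentially by bookkeeping.
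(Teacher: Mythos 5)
Your proposal is correct and follows essentially the same route as the paper: the inputs are the $L^q$--$L^r$ off-diagonal estimates on the interval $(p_-(L),p_+(L))\supset\bigl(\tfrac{2n}{n+2}-\varepsilon,\tfrac{2n}{n-2}+\varepsilon\bigr)$ from \cite{memoir} (pointwise Gaussian bounds when $n\le 2$), fed with $q$ just below $\tfrac{2n}{n+2}$ into the quantitative $L^q$--$L^2$ tent space theorem, and the numerology does reproduce the stated thresholds. The only (harmless) deviations are that the paper routes $n\ge 3$ through Corollary \ref{cor:MLtp2} and the range $2\le p\le\infty$ through Proposition \ref{prop:adjointdual}, whereas you apply Theorem \ref{thm:lql2abs} and Theorem \ref{thm:infini} directly---legitimate here since the Gaussian decay has order $\infty$, so the constraints involving $M$ and $p_M$ never bind---but you should add a word on $2<p<\infty$ (interpolation between $T^{2,2,2}$ and $T^{\infty,2,2}$, or use Proposition \ref{prop:adjointdual} as the paper does), and note that the upper endpoint of $\mathcal J(L)$ for $n=3$ is $6+\varepsilon$, not $3+\varepsilon$ (irrelevant, as only the lower endpoint enters).
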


{These two propositions (see Section 5 for  their proofs) follow from general statements (proved in Sections 3 and 4) in which one  requires a lower bound  on the polynomial decay exponent $M$ of  Definition \ref{def:qroff}. Note that this lower bound increases with dimension. As the decay here is exponential,  the exponent $M$ can be as large as one wants, and the results apply.

   We now consider the case of polynomial decay. This is our second point. In this case, the value of $M$ is to be compared with the lower bound in our statements for applicability.   For example, one has  $M=1$ in the  $L^2-L^2$ off-diagonal estimates with homogeneity $m=1$ for $\sqrt{-{\rm div} \,A\nabla}$  (even for $\sqrt{-\Delta}$).
 {Theorem \ref{thm:main} requires $M>n/\tau$, but}
one can take advantage of the fact that the exponent $M$ in the $L^q-L^2$ off-diagonal estimates grows linearly in $1/q$ (see  Proposition \ref{prop:qr}).  However, the range of $q$ may be limited as well,  which is the case for ${{-{\rm div} \,A\nabla}}$ operators with  complex-valued coefficients, and again we may not have a large enough value of $M$.

On the other hand, with no decay at all, the $p=2$ boundedness follows from Theorem \ref{AA}. So it seems reasonable to expect a range  of $p$ near 2 depending on $q$ and $M$, when $q\sim 2$ and $M>0$ is small, by some kind of interpolation procedure.  We will obtain (see Section 5) a general result in this direction, which  gives, for this  particular operator, the proposition below.}

\begin{proposition}\label{prop:sqrtdivagrad}  For a complex-valued  coefficient matrix $A$,
  $\mM_{\sqrt{-{\rm div} \,A\nabla}}$   extends to a bounded operator   on
 $T^{p,2,1}(t^{-1}dtdy)$ when $\frac{1}{2}<p\le \infty$ if $n=1$,  $\frac{2}{3}<p\le \infty$ if $n=2$,  $\frac{6}{7}-\varepsilon<p\le \infty$ if $n=3$,  $1 -\varepsilon<p\le \infty$ if $n= 4$ and  $2-\frac{4 }{n} -\varepsilon < p < \frac{2n-4}{n-4}+\varepsilon'$ if $n\ge 5$. The $\varepsilon, \varepsilon' >0$ depend on the operator but the lower bound is at least $\frac{n}{n+1} $. \end{proposition}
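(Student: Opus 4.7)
The plan is to combine the $p=2$ endpoint provided by Theorem~\ref{AA} (which requires no decay at all, only that $-\sqrt{L}$ generate a bounded analytic semigroup on $L^{2}$) with the general $L^{q}$–$L^{r}$ off-diagonal tent-space extension theorems proved in Sections 3 and 4 of the paper, and interpolate. The required off-diagonal estimates for the Poisson family $(t\sqrt{L}\,e^{-t\sqrt{L}})_{t\ge 0}$ will be obtained by subordination from the heat semigroup of $L=-\mathrm{div}\,A\nabla$, and the permitted range of $(q,r)$ is determined by the $L^{p}$-analyticity interval $(p_{-}(L),p_{+}(L))$ of $(e^{-sL})_{s\ge 0}$ computed in \cite{memoir}.

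First I would derive the $L^{q}$–$L^{r}$ off-diagonal bounds for $(t\sqrt{L}\,e^{-t\sqrt{L}})$ by subordination. From
$$
e^{-t\sqrt{L}} \;=\; \int_{0}^{\infty}\frac{t}{2\sqrt{\pi}\,s^{3/2}}\,e^{-t^{2}/(4s)}\,e^{-sL}\,ds,
$$
one differentiates in $t$ to write $t\sqrt{L}\,e^{-t\sqrt{L}}$ as an average of the family $(sLe^{-sL})$ against a kernel of mass $O(1)$ concentrated around $s\sim t^{2}$. By \cite{memoir}, $(sLe^{-sL})$ satisfies $L^{q}$–$L^{r}$ off-diagonal estimates of Gaussian (hence of arbitrarily high polynomial) order with homogeneity $m=2$, for every $q\le r$ in the interior of $(p_{-}(L),p_{+}(L))$. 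Integrating these heat bounds against the subordination kernel translates them into $L^{q}$–$L^{r}$ off-diagonal estimates for $(t\sqrt{L}\,e^{-t\sqrt{L}})$ with homogeneity $m=1$ and polynomial order
$$
M \;=\; 1+n\Bigl(\tfrac{1}{q}-\tfrac{1}{r}\Bigr),
$$
the extra unit of decay coming from the factor $t\sqrt{L}$ together with the Gaussian tail of the subordination density.

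Next I would feed these estimates into the general extension theorems from Sections 3 and 4 and interpolate with the unconditional $T^{2,2,1}(t^{-1}dtdy)$ boundedness of Theorem~\ref{AA} (which coincides with the $L^{2}(\R^{n+1}_{+},t^{-1}dtdy)$ statement by Fubini). For $p<2$, taking $r=2$ and $q$ just above $p_{-}(L)$, the triple $(q,2,M)$ satisfies the hypotheses of the $L^{q}$–$L^{2}$ refinement of Theorem~\ref{thm:main} precisely on a half-line $p>p_{0}(q,n)$; substituting the dimension-by-dimension values of $p_{-}(L)$ from \cite{memoir} produces the lower thresholds $\tfrac{1}{2},\,\tfrac{2}{3},\,\tfrac{6}{7}-\varepsilon,\,1-\varepsilon,\,2-\tfrac{4}{n}-\varepsilon$ of the proposition; the Hardy–tent floor $n/(n+1)$ caps the range from below. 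For $p>2$, an analogous $L^{2}$–$L^{r}$ argument with $r$ slightly below $p_{+}(L)$, combined with the Carleson endpoint of Theorem~\ref{thm:infini}, reaches $p=\infty$ whenever $p_{+}(L)=\infty$ (dimensions $n\le 4$), and gives the ceiling $p<\tfrac{2n-4}{n-4}+\varepsilon'$ for $n\ge 5$. The latter can equivalently be obtained by duality: $\bigl(\mathcal{M}_{\sqrt{L}}\bigr)^{*}=\mathcal{M}_{\sqrt{L^{*}}}^{-}$ on the relevant tent-space dualities, so the lower threshold $2-\tfrac{4}{n}-\varepsilon$ applied to $L^{*}$ conjugates to the stated upper bound for $L$.

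The main obstacle will be the tightness of the polynomial order $M$. Because the Poisson semigroup has only polynomial decay, at $q=r=2$ one has merely $M=1$, which is strictly insufficient to apply Theorems~\ref{thm:main} or~\ref{thm:infini} directly in any dimension $n\ge 2$. The whole argument rests on genuinely exploiting the linear gain $n(\tfrac{1}{q}-\tfrac{1}{r})$ in $M$ by pushing $q$ strictly below $2$ (and symmetrically $r$ strictly above $2$) inside the analyticity interval $(p_{-}(L),p_{+}(L))$, and checking in each dimension that the resulting parameters meet the hypotheses of the $L^{q}$–$L^{r}$ theorems of Sections 3 and 4. The dimension-dependent thresholds in the proposition are exactly the breakpoints at which this gain, constrained by the analyticity interval of $L$, ceases to be enough; the $\varepsilon,\varepsilon'$ losses reflect the fact that $p_{\pm}(L)$ are only known to lie strictly inside their respective Sobolev ranges.
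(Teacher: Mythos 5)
Your outline matches the paper's strategy for $n\le 4$: there one can indeed take $q$ just above $p_{-}(L)<\tfrac{2n}{n+2}$, observe that $M_{q}=1+n(\tfrac1q-\tfrac12)$ then exceeds $\tfrac n2$ (since $\tfrac{2n}{n+2}\le\tfrac{n}{n-1}$ exactly when $n\le4$), and apply Theorem \ref{thm:lql2abs} to $\mathcal M_{\sqrt L}$ directly, which yields the thresholds $\tfrac67-\varepsilon$ and $1-\varepsilon$; similarly $q>\tfrac{2n}{n-2}\ge n$ gives $mq>n$ and hence the $p=\infty$ endpoint via Proposition \ref{prop:adjointdual} (you do not need $p_{+}(L)=\infty$ for this, only $p_{+}(L)>n$).

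For $n\ge5$, however, there is a genuine gap. You correctly identify that everything hinges on the linear gain $n(\tfrac1q-\tfrac12)$ in $M$, but for $n\ge5$ one has $\tfrac{n}{n-1}<\tfrac{2n}{n+2}$, and since $p_{-}(L)<\tfrac{2n}{n+2}$ is sharp for this class (by \cite{hmm}), the admissible $q$ need not satisfy $q<\tfrac{n}{n-1}$. Consequently $M_{q}=1+n(\tfrac1q-\tfrac12)$ may fail to exceed the threshold $\tfrac{n}{2m}=\tfrac n2$ required by Theorem \ref{thm:lql2abs}, so there is \emph{no} $p_{0}<2$ endpoint for $\mathcal M_{\sqrt L}$ itself to interpolate with the $p=2$ bound; interpolation in $p$ alone cannot start. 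The paper's missing ingredient is to embed $\mathcal M_{\sqrt L}=\mathcal M_{1}$ into the analytic family $\mathcal M_{\alpha}$ with kernel $(t-s)^{\alpha-1}L^{\alpha/2}e^{-(t-s)\sqrt L}$, whose decay order $M_{q}(\alpha)=\Re e\,\alpha+n(\tfrac1q-\tfrac12)$ (Proposition \ref{prop:qr}, obtained from the resolvent hypothesis (H2) rather than from subordination, precisely so that all powers $\alpha$ are accessible) grows with $\Re e\,\alpha$. One then has a $T^{1,2,1}(t^{-1}dtdy)$ bound for $\Re e\,\alpha$ large and a $T^{2,2,1}(t^{-1}dtdy)$ bound for $\Re e\,\alpha>0$, and Stein interpolation \emph{in the parameter $\alpha$} at $\alpha=1$ produces the stated range $2-\tfrac4n-\varepsilon<p$ (Corollary \ref{cor:MLtp2}, A] case 3). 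The same device is needed for the upper bound $\tfrac{2n-4}{n-4}+\varepsilon'$ when $n\ge5$ (Corollary \ref{cor:MLtp2}, B] with $mq\le n$); your duality reduction is legitimate but merely transfers the problem to the lower bound for $\mathcal M^{-}_{\sqrt{L^{*}}}$, where the same obstruction reappears. Without the analytic family and the Stein interpolation in $\alpha$, your argument does not reach any $p<2$ (nor any $p>2$ below the claimed ceiling) in dimensions $n\ge5$.
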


{To do this interpolation procedure,  we view the maximal regularity operator within a family of operators {of the same nature}.  Thus,  and this is the third point,  it becomes interesting and convenient  to  develop an abstract formulation that is not restricted to the maximal regularity operator. We introduce, in the next section, a class of  singular integral operators  in the context of tent spaces.  Sufficient conditions for their boundedness are given in Sections 3 and 4. We remark that, in contrast to the usual $L^p$ theory for  Calder\'on-Zygmund  operators, no regularity of the kernel is necessary.  In a sense, despite the fact that tent spaces, for $1<p<\infty$, can be seen as subspaces of Hilbert-valued L$^p$ spaces (\cite{htv}), Calder\'on-Zygmund theory does not seem to be an appropriate machinery to study singular integral operators in this context.
We depart from the usual treatment of  maximal regularity {through} a singular integral operator acting on some Banach-valued functions. Here, we start from the ``easy'' Hilbert  ($L^2$) space  theory, and then move on to tent spaces, using the notion of $L^q-L^r$ off-diagonal decay, which extends the notion of  $L^2-L^2$ off-diagonal decay defined above.}

{
\begin{remark} Our results can, nevertheless, be extended to the context of tent spaces of Banach space-valued functions (provided the Banach space $X$ is UMD, and $1<p<\infty$). This is done by adapting the arguments of \cite{hnp} to take advantage of $L^q-L^2$ (for $q \leq 2$, resp. $L^2-L^q$ for $q \geq 2$), rather than $L^2-L^2$, off-diagonal estimates, in the same way it is done in this paper. However, the obvious adaptation does not seem to produce optimal relationships between $p,q,M$, and the geometry of $X$. We choose not to attempt to address this issue here.
\end{remark}
}

\section{Singular integral operators}

\subsection{Abstract setup}

Consider a separable complex Hilbert space $H$. For $\beta\in \R$, set $\mH_{\beta}= L^2(\R_{+}, t^\beta dt;  H)$ . We consider the following classes of operators $\mT^\pm \subset \mB(\mH_{0})$.

\begin{definition}
\begin{enumerate}
\item We say $T \in \mT^+$ if
$T \in  \mB(\mH_{0})$ and there exist  a {strongly} measurable family of operators $K(t,s)\in \mB(H)$, $t,s\in \R_{+}$ and a constant $C<\infty$ such that
$\|K(t,s)\| \le C |t-s|^{-1}$ and
\begin{equation}
\label{eq:rep+}
 Tf(t) = \int\limits_{0}^t K(t,s)f(s)\, ds
\end{equation}
 for all $f\in \mH_{0}$ with bounded support in $\R_{+}$ and  almost all $t\in \R_{+}$ not in the support of $f$.
 \item  We say $T \in \mT^-$ if
$T \in  \mB(\mH_{0})$ and there exist  a {strongly} measurable family of operators   $K(t,s)\in \mB(H)$, $t,s\in \R_{+}$  and a constant $C<\infty$  such that
$\|K(t,s)\| \le C |t-s|^{-1}$ and $T$ has the representation
 \begin{equation}
\label{eq:rep-}
Tf(t) = \int\limits _{t}^\infty K(t,s)f(s)\, ds
\end{equation}
 for all $f\in \mH_{0}$ with bounded support  in $\R_{+}$ and  almost all $t\in \R_{+}$ not in the support of $f$.
\end{enumerate}

\end{definition}   We remark that $K(t,s)$ need only be defined on $s<t$ for $T\in \mT^+$ and on $t<s$ for $T\in\mT^-$ and the value at $t=s$ is irrelevant.  With this precaution, we say that $T\in \mT^\pm$ is associated to the operator-valued kernel $K(t,s)$ {and that such kernels belong to the class  $\CZK^\pm$ of singular kernels}.

 Our terminology follows in part  that of singular integrals (here with operator-valued kernels) but we assume a sign condition on $s-t$.

 Let us make a few remarks.

 The representation   \eqref{eq:rep+}  of $Tf$ above is {a Bochner integral} and the equality holds in $H$.  It  is  clearly equivalent to
 \begin{equation}
\label{eq:rep+2}
\langle Tf, g\rangle = \iint\limits_{s<t} \langle K(t,s)f(s), g(t)\rangle dsdt
\end{equation}
 for $f,g\in \mH_{0}$ having bounded {disjoint} support.
 The inner product on the left is the canonical one in $\mH_{0}$, and on the right the canonical one in $H$.

  It is clear that  $T\in \mT^+$ if and only if $T^*\in \mT^-$, with associated kernel $K(s,t)^*$. Hence, similar comments apply to \eqref{eq:rep-}.

 The basic examples are of course $\mM_{L}\in \mT^+$ and $ \mM_{L}^-\in \mT^-$.  For $\mM_{L}$ the boundedness on $\mH_{0}$ is given by de Simon's theorem. Then the formula \eqref{eq:rep+2}  holds for all $f\in L^2(\R_{+}, dt; D(L)) $ and all $g\in \mH_{0}$ with continuous kernel $K(t,s)=Le^{-(t-s)L}$ on $s<t$. If now, $f,g$ have disjoints supports, one can argue by density  of $D(L)$ in $H$.  For $\mM_{L}^-$, we simply use $\mM_{L}^-=(\mM_{L^*})^*$.

There is a natural splitting of  operators $T \in \mT^+$ into an integral part plus a singular part.    Let $K$ be the associated kernel. Using that $t-s \sim t$ when $s<t/2$ and Hardy inequality, one has
 $$
 \int\limits_{0}^\infty \Bigg( \int\limits_{0}^{\frac t 2}\|K(t,s)\| \|f(s)\|\,ds \Bigg)^2\, dt \lesssim \int\limits_{0}^\infty \Bigg( \frac{1}{t}\int\limits_{0}^{\frac t 2} \|f(s)\|\,ds \Bigg)^2\, dt \lesssim \|f\|^2_{\mH_{0}}.
 $$
 Hence, the integral part of $T$ is the  operator defined for  $f\in \mH_{0}$  for almost all $t>0$ by the  {Bochner} integral
 $$(T_{2}f)(t)= \int \limits _{0} ^{\frac{t}{2}} K(t,s)f(s)\, ds,$$
and $T_{2} \in \mT^+$ as well. The singular part is  $T_{1}:=T-T_{2} \in \mT^+$,  and  carries  the singularity at $s=t$. Its associated kernel is $K(t,s) 1_{t/2<s<t}$.
Note that, for the integral part, the integral representation is  {valid without restriction on $f$ and $t$}.

For $T\in \mT^-$, one has the same splitting with  $T_{2}f(t)= \int \limits^\infty _{2t} K(t,s)f(s)\, ds$ as the integral part, and $T_{1}=T-T_{2}$ as the singular part.

Theorem \ref{AA} and its proof carry to this abstraction.

 \begin{theorem} \label{AAabs}  Let  $\beta \in (-\infty, 1)$. Any operator in $T\in\mT^+$  extends to a bounded operator on $\mH_{\beta}$ which is denoted by $T$ as well. Furthermore, for any kernel $K\in \CZK^+$ and  $f\in \mH_{\beta}$,  $ \int \limits _{0} ^{\frac{t}{2}} \|K(t,s)\|\|f(s)\|\, ds $ is an element of $L^2(\R_{+}, t^\beta dt)$, so that  for almost all $t>0$,
  $ \int \limits _{0} ^{\frac{t}{2}} K(t,s)f(s)\, ds $   is a Bochner integral   in $H$. {If, in particular, $K$ is the kernel of $T$ then this integral   agrees} with $(T_{2}f)(t)$.

 The same statement holds for $T\in \mT^{-}$ and $-\beta\in (-\infty,1)$.
\end{theorem}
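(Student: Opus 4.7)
The proof follows the strategy of Theorem \ref{AA}: split $T = T_{1} + T_{2}$ as in the preliminary discussion, where $T_{2} f(t) = \int_{0}^{t/2} K(t,s) f(s)\, ds$ is the integral part and $T_{1} = T - T_{2}$ carries the singularity on $s \in (t/2, t)$. The key analytic input is the weighted Hardy inequality
$$
\int_{0}^{\infty}\Bigl(\tfrac{1}{t}\int_{0}^{t} \phi(s)\, ds\Bigr)^{2} t^{\beta}\, dt \le C_{\beta} \int_{0}^{\infty} \phi(s)^{2} s^{\beta}\, ds,
$$
valid for $\beta < 1$ (by a standard weighted Cauchy--Schwarz computation). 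Compactly supported functions in $\R_{+}$ lie in $\mH_{0} \cap \mH_{\beta}$ and are dense in $\mH_{\beta}$, so it suffices to establish boundedness on this subspace and extend by continuity.

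To handle any $K \in \CZK^{+}$ and the Bochner integral claim, observe first that for $f \in \mH_{\beta}$, Cauchy--Schwarz gives $\int_{0}^{t/2} \|f(s)\|\, ds \le (1-\beta)^{-1/2} t^{(1-\beta)/2} \|f\|_{\mH_{\beta}} < \infty$ for each $t>0$ (using $\beta<1$). Combining the kernel bound $\|K(t,s)\| \le 2C/t$ on $s<t/2$ with the weighted Hardy inequality applied to $\phi(s) = \|f(s)\|$ yields $\int_{0}^{t/2}\|K(t,s)\|\|f(s)\|\,ds \in L^{2}(\R_{+}, t^{\beta}\,dt)$. Hence the Bochner integral exists in $H$ for a.e.\ $t$ and defines a bounded operator $U$ on $\mH_{\beta}$. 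When $K$ is the kernel of $T$, $U$ coincides with $T_{2}$ on $\mH_{0} \cap \mH_{\beta}$, so it is the desired extension.

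For $T_{1}$, the $\mH_{0}$-boundedness is automatic since $T$ and $T_{2}$ are bounded on $\mH_{0}$. Introduce the isometry $U_{\beta}: \mH_{\beta} \to \mH_{0}$, $U_{\beta} f(t) = t^{\beta/2} f(t)$, and compute, for $g \in \mH_{0}$ of compact support in $\R_{+}$,
$$
U_{\beta} T_{1} U_{\beta}^{-1} g(t) = \int_{t/2}^{t} (t/s)^{\beta/2} K(t,s) g(s)\, ds = T_{1} g(t) + R g(t),
$$
where $R g(t) = \int_{t/2}^{t} [(t/s)^{\beta/2} - 1] K(t,s) g(s)\, ds$. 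A mean value estimate gives $|(t/s)^{\beta/2} - 1| \le C_{\beta} (t-s)/s \le 2 C_{\beta} (t-s)/t$ on $s \in (t/2, t)$, and coupled with $\|K(t,s)\| \le C/(t-s)$ this produces $\|Rg(t)\| \lesssim t^{-1} \int_{0}^{t} \|g(s)\|\, ds$. Unweighted Hardy shows $R$ bounded on $\mH_{0}$, hence $U_{\beta} T_{1} U_{\beta}^{-1}$ is bounded on $\mH_{0}$, and $T_{1}$ extends to a bounded operator on $\mH_{\beta}$. Together with the previous step, $T$ extends boundedly to $\mH_{\beta}$.

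The $\mT^{-}$ statement is then obtained by duality: for $T \in \mT^{-}$ and $\beta > -1$, the $\mH_{0}$-adjoint $T^{*}$ lies in $\mT^{+}$ (with kernel $K(s,t)^{*}$ satisfying the same size bound), hence extends to a bounded operator on $\mH_{-\beta}$ by the first part. Under the pairing $\langle f, g\rangle = \int_{0}^{\infty}\langle f(t), g(t)\rangle_{H}\, dt$ which identifies $\mH_{\beta}^{*} = \mH_{-\beta}$, the operator $(T^{*})^{*}$ is bounded on $\mH_{\beta}$ and agrees with $T$ on $\mH_{0} \cap \mH_{\beta}$. The main technical subtlety is the handling of $T_{1}$: one cannot pull the weight $t^{\beta/2}$ through a singular kernel without risking non-integrability near $s=t$; the point is that localisation to $(t/2,t)$ keeps $(t/s)^{\beta/2}$ bounded, and its deviation from $1$ produces precisely the factor $(t-s)$ that tames the kernel singularity and reduces matters to ordinary Hardy.
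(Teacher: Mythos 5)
Your proof is correct and rests on the same mechanism as the paper's: conjugating by the weight $t^{\beta/2}$ produces a perturbation whose kernel gains a factor vanishing at $s=t$, which cancels the $|t-s|^{-1}$ singularity and reduces everything to Hardy/Schur estimates plus the assumed $\mH_{0}$-boundedness of $T$. The paper runs this as a single Schur-test bound for the commutator kernel $(t^{\beta/2}-s^{\beta/2})K(t,s)$ over all of $0<s<t$ (with the weighted Hardy inequality reserved for the Bochner-integral claim), whereas you split off $T_{2}$ first and handle $\mT^{-}$ by duality rather than by symmetry; these are organisational differences only, and your identification of $U_{\beta}T_{1}U_{\beta}^{-1}$ with $T_{1}+R$ is justified to the same (slightly informal, via the off-diagonal kernel representation) degree as the paper's corresponding commutator identity.
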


We include a quick argument. For $\alpha=\beta/2<1/2$, we have that
 $$
\int\limits_{0}^\infty \bigg(\int\limits_{0}^t \|K(t,s)\| \, |t^\alpha-s^\alpha|\,  {s^{-\alpha}}\|{s^{\alpha}}f(s)\|\, ds\bigg)^2 \, dt \lesssim \|f\|_{\mH_{\beta}}^2
$$
using the Schur test and the bound on $K$. Hence, the integral
operator { $f\mapsto [t \mapsto \int\limits_{0}^t K(t,s) (t^\alpha-s^\alpha) f(s)\, ds]$  is
bounded from $\mH_{\beta}$ to $\mH_{0}$. For $f\in \mH_{\beta}$ with compact support in $\R_{+}$, it agrees with
$t^\alpha({T}f)(t)- (T(s^\alpha f))(t)$. Since $T\in B(\mH_{0})$, this readily gives the result by density.}

The second part follows from the weighted Hardy inequalities {\cite{mu}} when $\beta<1$
$$
\int\limits_{0}^\infty \Bigg( \frac{1}{t}\int\limits_{0}^{\frac t 2} \|f(s)\|\,ds \Bigg)^2\, t^\beta dt \lesssim \|f\|^2_{\mH_{\beta}}.
$$

The proof for $\mT^-$ is left to the reader.

\subsection{Concrete situation}

Now, in order to get tent space results, we specialise to $H=L^2(\R^n)$, and  introduce subclasses.  First recall that $\mH_{\beta}$ can be identified with $L^2( \R^{n+1}_+ ,t^\beta dtdy)$. Hence, we now write $f(s)$ as $f$ or  $f(s,\cdot)$ if we want to specialise the $s$ variable.  Using that we have a spatial variable, we extend \eqref{eq:rep+}   as follows.

 \begin{lemma}\label{lem:rep} Let $\beta<1$ and $T\in \mT^+$. Let $E,F$ be two  Borel sets of $\R^n$ and $I,J$ two open intervals in $\R_{+}$. Assume that  $f \mapsto [ (t,y) \mapsto \int\limits_{0}^t |(K(t,s)f(s, \cdot))(y)|\, ds]$ is bounded from the space of functions  $f\in L^2(s^\beta dsdx)$ with  support in $I\times E$  into $L^2(J\times F, t^\beta dtdy)$.
 Then the representation $Tf(t,y)= \int\limits_{0}^t (K(t,s)f(s,\cdot))(y)\, ds$ holds  for all such $f$ {with equality in $L^2(J \times F, t^\beta dtdy)$.}

 The corresponding statement holds for $T\in \mT^-$ and $-\beta<1$.
\end{lemma}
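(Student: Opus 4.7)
The plan is to combine a density and continuity reduction with the defining integral representation of $\mT^+$, and then deal with the possible overlap of $I$ and $J$ via a partition argument. First, using that $T$ is bounded on $\mH_\beta$ (Theorem~\ref{AAabs}) and that, by the hypothesis, the signed operator $A: f \mapsto [(t,y) \mapsto \int_0^t (K(t,s)f(s,\cdot))(y)\, ds]$ is bounded from $L^2(I \times E, s^\beta dsdx)$ to $L^2(J \times F, t^\beta dtdy)$ (its pointwise modulus being dominated by the hypothesized absolute-value operator), I would reduce by density to the case where $f$ is bounded with compact $s$-support in $I \times E$ and $I$ is bounded away from $0$. Then $f \in \mH_0$ has bounded support in $\R_+$, so the defining property \eqref{eq:rep+} of $T \in \mT^+$ yields the Bochner-integral identity $Tf(t) = \int_0^t K(t,s)f(s)\, ds$ in $H$ for almost every $t$ outside the closed $s$-support of $f$; combined with the absolute integrability from the hypothesis, this gives the pointwise equality on $(J \setminus \overline{I}) \times F$.

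When $I \cap J$ has Lebesgue measure zero, which is the usual situation in applications (with $I$ and $J$ arising from a dyadic decomposition at separated scales), this already yields the desired $L^2$ equality on $J \times F$. For the general case of positive-measure overlap, I would partition $I \cap J$ into small open sub-intervals $\{I_k\}$, set $f_k = f \cdot \mathbf{1}_{I_k \times E}$, and observe that on each $I_k \times F$ the difference $Tf - Af$ equals $Tf_k - Af_k$, since all the cross-terms $Tf_j - Af_j$ with $j \ne k$ vanish on $I_k$ by the defining property applied to $f_j$ (whose $s$-support $\overline{I_j}$ does not meet $I_k$). One then passes to the limit as the partition is refined.

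The main obstacle I expect is precisely this last step: the raw $L^2$ boundedness of $T$ on $\mH_\beta$ and of $A$ only yields a uniform bound $\|Tf-Af\|_{L^2(J \times F, t^\beta dtdy)}^2 \lesssim \|f\|_{L^2(I \cap J,\, s^\beta)}^2$, not a vanishing one. To force the sum to zero, one must use the absolute integrability of $K(t,s)f(s,\cdot)$ in the hypothesis in an essential way---beyond the mere $L^2$ boundedness of $A$---typically via the absolute continuity (with respect to the domain of integration) of the bounded positive operator $f \mapsto [(t,y) \mapsto \int_0^t |(K(t,s)f(s,\cdot))(y)|\, ds]$, which shows that its restriction to thin strips $\{s \in I_k\}$ has vanishing operator norm as $|I_k| \to 0$. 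The corresponding statement for $T \in \mT^-$ follows by an analogous argument reversing the roles of $s$ and $t$, or by duality using $T = (T^*)^*$ together with the already-established statement applied to $T^*$.
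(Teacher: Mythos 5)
Your off-diagonal bookkeeping is correct: for $f_j$ supported in $\overline{I_j}\times E$ with $j\ne k$ and almost every $t\in I_k$, the defining property \eqref{eq:rep+} gives $Tf_j(t)=\int_0^tK(t,s)f_j(s)\,ds$ as a Bochner integral, so on $(I_k\cap J)\times F$ the difference $Tf-Af$ does reduce to $Tf_k-Af_k$. The gap is exactly where you place it, but your proposed repair does not close it. Split the remaining quantity as $\sum_k\|1_{I_k\times F}Af_k\|^2+\sum_k\|1_{I_k\times F}Tf_k\|^2$. The first sum can indeed be handled by the hypothesis, but only in its strong (fixed-$f$) form: for $t\in I_k$ one has $|Af_k(t,y)|\le\int_{\max(0,t-\delta)}^t|(K(t,s)f(s,\cdot))(y)|\,ds=:G_\delta(t,y)$ with $\delta$ the mesh, and $G_\delta\downarrow0$ a.e.\ on $J\times F$ (the full integral is finite a.e.\ by hypothesis) while $G_\delta\le G_\infty\in L^2(J\times F,t^\beta dtdy)$, so $\|G_\delta\|\to0$ by dominated convergence. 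The operator-norm statement you actually invoke --- that compressions of a bounded positive integral operator to thin strips have vanishing norm --- is false: a kernel made of disjoint bumps $\epsilon_n^{-1}1_{Q_n}$ on squares $Q_n$ of side $\epsilon_n$ sitting at distance $\simeq\epsilon_n$ from the diagonal, with $\epsilon_n\to0$, obeys $k(t,s)\lesssim(t-s)^{-1}$, defines a bounded positive operator, yet its compressions to strips of width $4\epsilon_n$ have norm $\simeq1$.

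The truly irreparable part of your scheme is $\sum_k\|1_{I_k\times F}Tf_k\|^2$: since $T$ is only known as a bounded operator on $\mH_{\beta}$, all you obtain is $C^2\sum_k\|f_k\|^2=C^2\|f\|^2$, and refining the partition does not help (the singular part of $\mathcal{M}_L$ compressed to a short time interval has norm comparable to the full de Simon constant, by scaling). Structurally, your argument uses only that $T-A$ is ``local in $t$'' (it kills cross terms), and locality alone cannot force $T-A=0$: an operator $Df(t,\cdot)=B_tf(t,\cdot)$, with $(B_t)$ a bounded family in $B(L^2(\R^n))$, is local in $t$, invisible to the kernel $K$ (it vanishes for $t\notin\operatorname{supp}f$), and nonzero. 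Excluding such a diagonal contribution is precisely the content of the lemma, and your argument never engages with it. The paper takes a different route: it pairs both sides against $g\in C_0^\infty(J;L^2(F))$, uses the hypothesis and Fubini to rewrite the right-hand side as $\iint_{s<t}\langle K(t,s)f(s,\cdot),g(t,\cdot)\rangle\,dsdt$, and then, after decomposing on orthonormal bases of $L^2(E)$ and $L^2(F)$, identifies the Schwartz kernel $S_{j,k}\in\mathcal{D}'(I\times J)$ of the resulting scalar bilinear form with $\langle K(t,s)e_j,\varepsilon_k\rangle 1_{s<t}$ via \eqref{eq:rep+2}. Some input of this distributional type (or another argument beyond norm bounds and $t$-locality) is needed to finish; your duality reduction for the $\mT^-$ case is fine once the $\mT^+$ case is actually established.
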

Remark that this lemma is only needed for singular parts. For regular parts, the representation is valid without support conditions.

\begin{proof} Both terms are defined in $L^2(J\times F, t^\beta dtdy)$ by assumption so that it suffices to prove  the following claim:
$$
\langle Tf,g\rangle =\iint\limits_{J\times F} \bigg(\int\limits_{0}^t (K(t,s)f(s,\cdot))(y)\, ds\bigg)\,  \overline g(t,y) dtdy
$$
for all $f \in C^\infty_{0}(I; L^2(E))$ and $g \in C^\infty_{0}(J; L^2(F))$.  We implicitly extend $f(s,\cdot)$ by 0 outside $E$ and $g(t,\cdot)$ by 0 outside $F$. Remark that, from the assumption,
$(s,t,y) \mapsto (K(t,s)f(s,\cdot))(y) \overline g(t,y) 1_{s<t}$ is integrable with integral bounded by $\|f\|_{L^2(s^\beta dsdx)}\|g\|_{L^2(t^\beta dtdy)}$, hence, by Fubini's theorem, we only have to show
$$
\langle Tf,g\rangle=\iint\limits_{s<t} \langle K(t,s)f(s,\cdot), g(t,\cdot)\rangle \,  dsdt.
$$
Choose orthonormal bases $(e_{j})$ of $L^2(E)$ and $(\varepsilon_{k})$ of $L^2(F)$.
 By a limiting  argument for each term,
 it is enough to assume that $f(s,\cdot)$ and $g(t,\cdot)$ take values  in finite dimensional linear spans of the respective bases. Indeed, use boundedness of $T$ in the left hand side and the integrability assumption in the right hand side.  By linearity, it is enough to assume that $f(s,\cdot)=  f_{j}(s)  e_{j}$ and $g(t,\cdot)=  g_{k}(t) \varepsilon_{k}$
 for scalar test functions $f_{j}, g_{k}$.  In this case, there is a distribution $S_{j,k}\in \mathcal{D}'(I\times J)$ such that $\langle Tf,g \rangle= (S_{j,k}(t,s),  f_{j}(s)\overline {g_{k}}(t))$. It follows from \eqref{eq:rep+2} and decomposing on the orthonormal bases  that $\langle K(t,s) e_{k}, \varepsilon_{j}\rangle $ is the restriction to $0<s<t<\infty, s\in I, t\in J$ of $S_{j,k}$. Thus the desired equality
 holds for such $f,g$ and we are done.

We skip the similar proof for $T\in \mT^{-}$.
 \end{proof}

 In applications,  it suffices to show (absolute) convergence of the integral $\int\limits_{0}^t K(t,s)f(s, \cdot)\, ds
$ in the norm $L^2(J\times F, t^\beta dtdy)$  to obtain an estimate of $Tf$ in that norm, when $f$ is supported in $I\times E$.   We shall use this when $E$ and $F$ are at  positive distance {and $K(t,s)$ satisfies certain} decay estimates.

{We thus introduce subclasses of $\mT^\pm$, where the size estimate $\|K(t,s)\| \lesssim |t-s|^{-1}$ is  complemented by the following time-space estimates.}

\begin{definition}\label{def:qroff} Let $1\le q\le r\le \infty$.
An operator-valued kernel  $K=(K(t,s))_{t, s > 0} \subset B(L^{2}(\R^{n}))$ is said to
satisfy $L^q-L^r$ decay of order $M> 0$, with homogeneity $m\in \N^*$, if, for all Borel  sets
$E,F \subset \R^{n}$, all $t\ne s$, and all $f \in L^{2}(\R^{n})\cap L^q(\R^n)$:
$$
\|1_{F}K(t,s)1_{E}f\|_{r} \lesssim |t-s|^{-1-\frac{n}{m}(\frac{1}{q}- \frac{1}{r})}\,  \Big(1+\frac{dist(E,F)^{m}}{|t-s|}\Big)^{-M}\|1_{E}f\|_{q}.
$$
Here, and in what follows $\|\cdot \|_{q}$ denotes the norm in $L^q(\R^n)$. \end{definition}

{Note that, in the proofs, one only needs this property for sets of the form $E=B(x,r)$ and $F = B(x,2^{k+1}r)\backslash B(x,2^{k}r)$ (or vice versa). For this restricted property, $L^q-L^r$ decay implies $L^{\tilde{q}}-L^{\tilde{r}}$ decay for $q \leq \tilde{q} \leq \tilde{r} \leq r$ (by H\"older's inequality), but the order of decay changes. See \cite{am} for more on this issue. We do not, however, use this fact in this paper.}

We need only two specific cases: $1\le q \le 2$ and $r=2$, {and $q=2$ and $2\le r\le \infty$.
{
In certain cases, the decay is actually exponential, so the polynomial decay defined above holds for all $M>0$, in which case we say that the order is $\infty$.
In this paper, we are particularly interested in obtaining results under minimal values of polynomial decay.

\begin{definition}\label{def:SIOq} Let $1\le q \le \infty$ and $M\in \R_{+}\cup  \{\infty\}$. We say that $\mathbf{T\in \mT^\pm_{m,q,M}}$ if $T\in \mT^\pm$ and the associated operator-valued kernel $K(t,s)\in \CZK^\pm$ satisfies $L^q-L^2$ (resp. $L^2-L^q$)  decay of order $M$, with homogeneity $m$, when $q\le 2$ (resp. $q\ge 2$).

\end{definition}

The value of $m$ is dictated by the situation, and $q$ and $M$ are the most important parameters. Let us point out that all calculations work with $m$ being any positive real number, rather than just integer. We mention this for potential development towards fractal situations where fractional homogeneity can occur.}

\section{Role of $L^q-L^2$  decay}
\label{sec:result}

{
The range of $p$ below 2 for which $T^{p,2}$ boundedness results hold {can be quantified by}
$L^q-L^2$ decay. Some technical conditions are also required. In particular
the order $M$ should not be too small.
}

 \begin{theorem}\label{thm:lql2abs}   Let $T\in \mT^+_{m,q,M}$  with $1\le q \le 2$, $M>\frac{n}{2m}$ and let $p_{M}<1$ be defined by $M=\frac{n}{2m}(\frac 2 {p_{M}} -1)$.  Let $q'$ be the dual exponent to $q$ and  $\beta<1$.
 \begin{enumerate}
  \item If $q'\le \frac{2n}{m(1-\beta)}$ or equivalently $\frac{n}{2m} \ge -\frac{\beta-1}{2} + \frac{n}{m}(\frac{1}{q}-\frac{1}{2})$
   then
 $T$  extends to a bounded operator on
 $T^{p,2,m}(t^{\beta}dtdy)$ when $2\ge p>p_{c}$, where
 $$p_{c}=\frac{2\left(\frac{n}{2m}-\frac{n}{m}\left(\frac{1}{q}-\frac{1}{2}\right)\right)}{\frac{n}{2m}-\frac{n}{m}\left(\frac{1}{q}-\frac{1}{2}\right)+ \frac{1-\beta}{2}}= \frac{4n}{2n + m(1-\beta)q'}\ge 1\ .
$$
 \item
 If $q'> \frac{2n}{m(1-\beta)}$ or equivalently  $ -\frac{\beta-1}{2} + \frac{n}{m}(\frac{1}{q}-\frac{1}{2})> \frac{n}{2m}$
 then
 $T$  extends to a bounded operator on
 $T^{p,2,m}(t^{\beta}dtdy)$ when $2\ge p>\sup(p_{M},\tilde p_{c})$, where
$$
\tilde p_{c}= \frac{2n}{\frac{2n}{q}+ m(1-\beta)}<1\ .
$$
  \end{enumerate}
\end{theorem}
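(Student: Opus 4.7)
The plan is to combine the $p=2$ boundedness from Theorem \ref{AAabs} with an atomic estimate and interpolation. Using the isometry $T^{p,2,m}(t^\beta dtdy) \simeq T^{p,2}$, the Coifman--Meyer--Stein atomic theory of tent spaces transfers: for $p \le 1$ it suffices to prove a uniform bound $\|Ta\|_{T^{p,2,m}(t^\beta dtdy)} \lesssim 1$ on atoms $a$ supported in a parabolic tent $\widehat{B} = \{(s,y) : s^{1/m} \le r, \ y \in B\}$ over a ball $B$ of radius $r$, with $\|a\|_{L^2(t^\beta dtdy)} \lesssim r^{-n(1/p - 1/2)}$. Complex interpolation with $p = 2$ then fills in the intermediate range.

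First I would split $T = T_1 + T_2$ as in the introduction. The integral part $T_2$, whose kernel is supported in $s < t/2$, is controlled for all $p > 0$ via Hardy's inequality and the Fefferman--Stein maximal inequality; this is harmless. So we focus on the singular part $T_1$. Given an atom $a$ as above, decompose $\R^n$ into $C_0 = 4B$ and the annuli $C_k = 2^{k+2}B \setminus 2^{k+1}B$ for $k \ge 1$, and estimate the $T^{p,2,m}(t^\beta dtdy)$-norm of $T_1 a$ over the tent with base in each $C_k$. For $k = 0$, Cauchy--Schwarz in space and the $L^2$-boundedness of $T_1$ from Theorem \ref{AAabs} suffice. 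For $k \ge 1$, Lemma \ref{lem:rep} legitimates the integral representation, and I would apply the $L^q-L^2$ decay
$$
\|1_{C_k} K(t,s) 1_B a(s, \cdot)\|_2 \lesssim (t-s)^{-1-\frac{n}{m}(\frac{1}{q} - \frac{1}{2})} \Bigl(\frac{(2^k r)^m}{t - s}\Bigr)^{-M} \|a(s, \cdot)\|_q,
$$
followed by Cauchy--Schwarz in $s \in (t/2, t)$ and the H\"older bound $\|a(s, \cdot)\|_q \lesssim |B|^{1/q - 1/2} \|a(s, \cdot)\|_2$.

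The resulting estimate features a geometric factor of order $(2^k r)^{-mM + n(1/q - 1/2)}$ together with a time integral whose convergence near $s = t$ is governed either by the weight $t^\beta$ or by the kernel exponent $-1 - (n/m)(1/q - 1/2)$. Summing over $k$ requires $M > n/(2m)$, which holds by hypothesis, and integrating in $t$ while balancing against the atomic normalization $r^{-n(1/p - 1/2)}$ pins down the permitted $p$. The two cases of the theorem correspond precisely to this dichotomy: when $q' \le 2n/(m(1-\beta))$, the singular time integral is controlled by $t^\beta$ and the threshold $p_c = 4n/(2n + m(1-\beta)q')$ emerges; when $q' > 2n/(m(1-\beta))$, the time integrability is limited by the kernel's scaling exponent, giving $\tilde{p}_c = 2n/(2n/q + m(1-\beta))$, and the extra constraint $p > p_M$ is what ensures convergence of the $k$-sum against the atomic normalization in that regime.

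The main obstacle will be the bookkeeping: tracking powers of $r$, $2^k r$, $t$, and $s$ across both regimes so that the two thresholds $p_c$ and $\tilde{p}_c$ emerge from the same scheme, handling carefully the change of variables $t = s^m$ implicit in the isometry with $T^{p,2}$, and verifying that $T_1 a$ outside the tent is a well-defined locally square-integrable function that can be reassembled into the tent-space norm through the annular decomposition.
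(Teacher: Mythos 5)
Your overall framework (splitting $T=T_{1}+T_{2}$, atomic estimates for $p\le 1$, interpolation with the $p=2$ case) matches the paper's, but the proposal inverts where the difficulty lies, and this is a genuine gap rather than a bookkeeping issue. You dismiss the tail part $T_{2}$ (kernel supported in $s<t/2$) as ``harmless'' and controllable for all $p>0$ by Hardy's inequality; Hardy's inequality only gives the weighted $L^{2}$, i.e.\ $p=2$, bound. When $T_{2}$ acts on an atom $A$ supported in $(0,r^{m}]\times B$, the output $T_{2}A(t,\cdot)$ is nonzero for \emph{every} $t>0$, so its mass is spread over all time scales, and controlling the resulting tent-space norm is exactly where the $L^{q}$--$L^{2}$ decay and the thresholds $p_{c}$, $\tilde p_{c}$ enter. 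In the paper it is $T_{2}$, not $T_{1}$, that produces both critical exponents (see the remark following the proof: the most restrictive conditions come from the tail operator, contrary to the usual intuition for singular integrals). The singular part $T_{1}$, which you analyze in detail, only yields the constraint $p>p_{M}$ from summing the annular pieces: your annulus-by-annulus estimate with Cauchy--Schwarz and $L^{q}$--$L^{2}$ decay is essentially the paper's Lemma \ref{lem:m1}, and it cannot produce $p_{c}$ or $\tilde p_{c}$, because for $T_{1}$ the time variable stays in $(0,2r^{m}]$ and the exponents $\beta$ and $\frac{1}{q}-\frac{1}{2}$ never combine into the quantity $\frac{n}{m}(\frac{1}{q}-\frac{1}{2})+\frac{1-\beta}{2}$ that defines both thresholds; that combination is generated by the integral over $s$ near $0$.

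Moreover, even a correct direct atomic estimate on $T_{2}$ only reaches $p>\sup(p_{M},\tilde p_{c})$, i.e.\ case (2). Case (1), where $\tilde p_{c}$ would be $\ge 1$ and the atomic route stalls, requires an additional idea absent from your proposal: the paper embeds $T_{2}$ into the analytic family $\mathcal{J}_{\alpha}f(t,y)=\int_{0}^{t/2}(s/t)^{\alpha}(K(t,s)f(s,\cdot))(y)\,ds$, proves $T^{1,2,m}$ bounds for $\Re e\,\alpha$ large enough, $T^{2,2,m}$ bounds for $\Re e\,\alpha>\frac{\beta-1}{2}$, and recovers $\mathcal{J}_{0}=T_{2}$ on $T^{p,2,m}(t^{\beta}dtdy)$ for $p>p_{c}$ by Stein interpolation for analytic families on tent spaces. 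Without that device you cannot obtain the exponent $p_{c}=4n/(2n+m(1-\beta)q')$ of case (1).
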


Let us say a word on the exponents $p_{c}, \tilde p_{c}$. In the first case, $p_{c}\ge 1$. In the second case, $\tilde p_{c}< 1$. It is consistent as
$$
\tilde p_{c} = p_{c} \Longleftrightarrow \tilde p_{c} = 1 \Longleftrightarrow p_{c} = 1   \Longleftrightarrow  \frac{n}{2m} = -\frac{\beta-1}{2} + \frac{n}{m}\Big(\frac{1}{q}-\frac{1}{2}\Big)\ .
$$

{When $q$ is small, we thus get results for $p$ below $1$ provided $M$ is not too small (e.g. in the case of exponential decay).}

As a function of $q$, the exponents $ p_{c},\tilde p_{c}$ are increasing. When $q=2$,
$\tilde p_{c}= \frac{2n}{n+m(1-\beta)}$ which is the exponent found in Theorem \ref{thm:main}.
{ Remark that we improve over the lower bound: $M>\frac n{2m}$ suffices here  instead of $M> \frac n{pm}$ when $p\le 2$. }

{ In \cite{amp}, Theorem \ref{thm:main} was proved using comparison of tent space norms under change of apertures, i.e. $B(x,t^{\frac{1}{m}})$ changed to $B(x,c t^{\frac{1}{m}})$ for $c>1$. The sharp behavior of these comparisons was obtained in \cite{a} using atomic decompositions and interpolation. It is thus natural to use atoms here as well to prove our results. }{Furthermore, it simplifies the proofs greatly.}

Recall  that for $0<p\le 1$, the tent space $T^{p,2}$ has an atomic decomposition \cite{cms}: A $T^{p,2}$ atom is a function $a(t,y)$ supported\footnote{The support is a relatively closed subset of $ \R^{n+1}_+$.} in a region $(0,r]\times B$ where $B$ is a (closed) ball on $\R^n$ of radius $r$, satisfying
$
 \int \limits _{B} \int \limits _{0} ^{r} |a(t,y)|^{2} \frac{ dtdy}{t} \le r^{-n(\frac{2}{p}-1)}.
 $
Any $T^{p,2}$ function $g$ can be represented as a  series $g=\sum \lambda_{j }a_{j}$ where $a_{j}$ is a $T^{p,2}$ atom and $\sum |\lambda_{j}|^p \sim \|g\|_{T^{p,2}}^p$. Here the series converges in the tent space quasi-norm, and, in particular, in $L^2_{loc}(\R^{n+1}_{+})$.
Translating this to our setting, $T^{p,2,m}(t^{\beta}dtdy)$ atoms are functions $A(t,y)$ with support in $(0,r^m]\times B$, where $B$ is a (closed) ball in $\R^n$ of radius $r$,  satisfying
$
 \int \limits _{B} \int \limits _{0} ^{r^m} |A(t,y)|^{2} {t^\beta  dtdy}{} \le r^{-n(\frac{2}{p}-1)},
 $
 and the decomposition theorem holds in $T^{p,2,m}(t^{\beta}dtdy)$. Remark that atoms are also special elements of $L^2( \R^{n+1}_+, t^\beta dtdy)=T^{2,2,m}(t^{\beta}dtdy)$ which is helpful for representation purposes of $\mT^\pm$ operators acting on them.

{
\begin{remark}
\label{rk:iso}
Recall that the map $j:  T^{p,2,m}(t^{\beta}dtdy) \to T^{p,2,1}(t^{-1}dtdy)$ defined by   $j(f)(t,y) =\sqrt m  t^{\frac{m(1+\beta)}{2}}f(t^{m},y)$ is an isometry; it also sends
$T^{p,2,m}(t^{\beta}dtdy)$ atoms to $T^{p,2,1}(t^{-1}dtdy)$ atoms.
\end{remark}
}
\begin{lemma}\label{lem:atomic}
Let $p\le 1$ and $T$ a linear  operator bounded on $T^{2,2,m}(t^{\beta}dtdy)$. Then
$T$ has a bounded extension from $T^{p,2,m}(t^{\beta}dtdy) \cap T^{2,2,m}(t^{\beta}dtdy)$ to
$T^{p,2,m}(t^{\beta}dtdy)$ if it is uniformly bounded on $T^{p,2,m}(t^{\beta}dtdy)$ atoms.
\end{lemma}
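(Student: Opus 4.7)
The strategy is the classical atomic argument for $p \le 1$, leveraging the $p$-triangle inequality $\|u+v\|_{T^{p,2,m}}^p \le \|u\|_{T^{p,2,m}}^p + \|v\|_{T^{p,2,m}}^p$ and completeness of the quasi-Banach space $T^{p,2,m}(t^{\beta}dtdy)$. The plan is to decompose $f$ atomically, apply $T$ to each atom via the $T^{2,2,m}$ boundedness, recombine via $p$-subadditivity, and finally identify the resulting sum with $Tf$; density then promotes the estimate on $T^{p,2,m} \cap T^{2,2,m}$ to an extension on all of $T^{p,2,m}$.

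Fix $f$ in the intersection and write $f = \sum_j \lambda_j a_j$ via the atomic decomposition, so each $a_j$ is a $T^{p,2,m}(t^{\beta}dtdy)$ atom, $\sum_j |\lambda_j|^p \sim \|f\|_{T^{p,2,m}}^p$, and the series converges in $T^{p,2,m}$. Each $a_j$ is supported in a tent $(0,r^m] \times B$ with an $L^2(t^{\beta}dtdy)$ bound, hence belongs to $T^{2,2,m}$; consequently $Ta_j$ is unambiguously defined and satisfies $\|Ta_j\|_{T^{p,2,m}} \le C$ by hypothesis. Setting $g_N = \sum_{j \le N} \lambda_j Ta_j$, the $p$-triangle inequality yields
$$\|g_M - g_N\|_{T^{p,2,m}}^p \le C^p \sum_{N < j \le M} |\lambda_j|^p,$$
so $(g_N)$ is Cauchy and converges in $T^{p,2,m}$ to some $g$ with $\|g\|_{T^{p,2,m}}^p \le C^p \sum_j |\lambda_j|^p \lesssim \|f\|_{T^{p,2,m}}^p$.

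The main obstacle is to identify $g$ with $Tf$, where $Tf$ is defined by the given $T^{2,2,m}$ boundedness. The idea is to exploit the structure of the CMS construction: its atoms arise from a disjoint stopping-time decomposition of $\R^{n+1}_+$ into tent pieces, so the partial sums $S_N = \sum_{j \le N} \lambda_j a_j$ satisfy $|S_N| \le |f|$ almost everywhere; since $f \in T^{2,2,m}$, dominated convergence then gives $S_N \to f$ in $L^2(\R^{n+1}_+, t^{\beta}dtdy)$, hence in $T^{2,2,m}$. By $T^{2,2,m}$-boundedness of $T$, $TS_N = g_N \to Tf$ in $T^{2,2,m}$, while by construction $g_N \to g$ in $T^{p,2,m}$; since both modes of convergence imply convergence in $L^2_{loc}(\R^{n+1}_+)$, we conclude $g = Tf$. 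Finally, the promised bounded extension to all of $T^{p,2,m}(t^{\beta}dtdy)$ is obtained by density, since finite sums of atoms lie in $T^{p,2,m} \cap T^{2,2,m}$ and are dense in $T^{p,2,m}$.
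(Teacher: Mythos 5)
Your proof is correct and follows essentially the same route as the paper, which simply invokes Step 3 of the proof of Theorem 4.9 in \cite{amr}: decompose $f$ atomically, use the disjoint-support structure of the Coifman--Meyer--Stein construction plus dominated convergence to get convergence of the partial sums in $T^{2,2,m}(t^{\beta}dtdy)$, identify $\sum_j\lambda_j Ta_j$ with $Tf$ via $L^2_{loc}$ limits, and extend by density and completeness. No gaps.
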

 \begin{proof}
Adapt to $p\le 1$ the argument in Step 3 of the proof of Theorem 4.9 in \cite{amr} done for
{
$T^{p,2,1}(t^{-1}dtdy)$ (without loss of generality, one can take $m=1$, and $\beta=-1$ by Remark \ref{rk:iso})}. This argument also furnishes the extension procedure.
\end{proof}
Theorem \ref{thm:lql2abs} follows immediately from the two lemmas below applied to the decomposition of $T \in  \mT^+_{m,q,M}$ into its singular part $T_{1}$ plus   its integral part  $T_{2}$. Recall that $M>\frac{n}{2m}$.

 \begin{lemma}\label{lem:m1}
 The operator $T_{1}$  extends   to
 $T^{p,2,m}(t^{\beta}dtdy)$ for $p>p_{M}$.
\end{lemma}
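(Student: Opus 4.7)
By Lemma \ref{lem:atomic}, it suffices to bound $\|T_1 A\|_{T^{p,2,m}(t^\beta dtdy)}$ uniformly over $T^{p,2,m}(t^\beta dtdy)$-atoms $A$ for $p \in (p_M,1]$; the range $p\in (1,2]$ then follows by complex interpolation with the $L^2$-bound provided by Theorem \ref{AAabs}. So fix an atom $A$ with support in $B\times (0,r^m]$, $B=B(x_0,r)$, satisfying $\|A\|_{L^2(t^\beta dtdy)} \le r^{-n(1/p-1/2)}$. Since the kernel of $T_1$ is supported in $\{t/2<s<t\}$, $T_1 A$ is time-supported in $(0,2r^m]$. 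Set $D_0 = B(x_0,2r)$, $D_k = B(x_0,2^{k+1}r)\setminus B(x_0,2^k r)$ for $k\ge 1$, and $\phi_k = 1_{D_k} T_1 A$. By the $p$-quasi-triangle for tent quasi-norms,
$$\|T_1 A\|_{T^{p,2,m}(t^\beta dtdy)}^p \le \sum_{k\ge 0} \|\phi_k\|_{T^{p,2,m}(t^\beta dtdy)}^p.$$

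Each area function $S(\phi_k)$ is spatially supported in a ball of radius $\lesssim 2^k r$ around $x_0$, so H\"older's inequality in $x$ together with the $L^2$-identity $\|\phi_k\|_{T^{2,2,m}(t^\beta dtdy)}^2 = b_n \|\phi_k\|_{L^2(t^\beta dtdy)}^2$ yield
$$\|\phi_k\|_{T^{p,2,m}(t^\beta dtdy)} \lesssim (2^k r)^{n(1/p-1/2)} \|\phi_k\|_{L^2(t^\beta dtdy)}.$$
For $k=0$, Theorem \ref{AAabs} and the atom normalization give $\|\phi_0\|_{L^2(t^\beta dtdy)} \lesssim r^{-n(1/p-1/2)}$, hence $\|\phi_0\|_{T^{p,2,m}(t^\beta dtdy)}\lesssim 1$. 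For $k\ge 1$ the goal is to prove
$$\|\phi_k\|_{L^2(t^\beta dtdy)} \lesssim 2^{-kmM} r^{-n(1/p-1/2)},$$
which gives $\|\phi_k\|_{T^{p,2,m}(t^\beta dtdy)} \lesssim 2^{k[n(1/p-1/2)-mM]}$, whose $p$-th powers are summable in $k$ precisely when $p>p_M = \frac{2n}{n+2mM}$, using the definition $M=\frac{n}{2m}(2/p_M-1)$.

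To prove the $L^2$-bound, I combine the $L^q$--$L^2$ off-diagonal decay with H\"older on the ball $B$: since $\mathrm{dist}(D_k,B)\sim 2^k r$, one gets $\|1_{D_k} K(t,s)1_B g\|_2 \lesssim \tilde K_k(t,s)\|g\|_2$ for every $g\in L^2(\R^n)$, where, setting $\gamma := n(1/q-1/2)$,
$$\tilde K_k(t,s) = r^{\gamma}\,|t-s|^{-1-\gamma/m}\,\bigl(1+(2^k r)^m/|t-s|\bigr)^{-M}.$$
By Minkowski's integral inequality, the bound on $\|\phi_k\|_{L^2(t^\beta dtdy)}$ reduces to a Schur-type estimate for the scalar operator with kernel $\tilde K_k(t,s)1_{t/2<s<t}$ on $L^2(t^\beta dt)$, acting on inputs supported in $(0,r^m]$. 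Since on this support $|t-s|\le r^m \le (2^k r)^m$, one may replace $(1+v)^{-M}$ by $v^{-M}$ (with $v=(2^k r)^m/|t-s|$), and, using $s\sim t$ on the strip $s\in (t/2,t)$, a direct change of variables shows that both $\int \tilde K_k(t,s)\,ds$ and $\int \tilde K_k(t,s)\,dt$ are $\lesssim 2^{-kmM}$. Schur's lemma then gives the required operator norm $\lesssim 2^{-kmM}$.

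The key technical point is the integrability of $\tilde K_k$ near $s=t$: the above computation requires $M>\gamma/m = n(1/q-1/2)/m$, which is implied by the hypothesis $M>n/(2m)$ (with equality in the worst case $q=1$). This is what makes $M>n/(2m)$ the sharp minimal decay order needed for the scheme to close, and it is precisely this inequality that drives the exponent $p_M$ appearing in the conclusion.
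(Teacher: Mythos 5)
Your proof is correct and follows essentially the same route as the paper: reduce to atoms via Lemma \ref{lem:atomic} and interpolation, split $T_{1}A$ over dyadic annuli, and use the $L^q$--$L^2$ decay (with H\"older on the ball to pass from $\|A(s,\cdot)\|_{q}$ to $\|A(s,\cdot)\|_{2}$) to gain a factor $2^{-kmM}$ per annulus, summing under $p>p_{M}$. The only cosmetic differences are that you run a Schur test in the time variable where the paper uses a Cauchy--Schwarz inequality with an $\varepsilon$-weight, and that you should explicitly invoke Lemma \ref{lem:rep} to justify the kernel representation of $1_{D_k}T_{1}A$ for $k\ge 1$ (your absolute-convergence estimates supply exactly its hypothesis).
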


 \begin{lemma}\label{lem:m2} The statement of Theorem \ref{thm:lql2abs} holds for $T_{2}$.
\end{lemma}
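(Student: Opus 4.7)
My plan is to combine atomic decomposition on tent spaces below $p = 1$ with interpolation up to $p = 2$. At $p = 2$, the bound $T_2 \in \mB(T^{2,2,m}(t^\beta dtdy))$ is immediate from Theorem~\ref{AAabs}, since $T^{2,2,m}(t^\beta dtdy)$ coincides with $\mH_\beta$ up to a constant and $T_2 \in \mT^+$. For $p_0 \leq 1$ just above the relevant critical exponent (i.e., $p_0 > p_c$ in Case~1, understood in the relevant sub-range, or $p_0 > \sup(p_M, \tilde p_c)$ in Case~2), Lemma~\ref{lem:atomic} reduces the problem to proving uniform boundedness of $T_2$ on $T^{p_0, 2, m}(t^\beta dtdy)$ atoms. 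Interpolation of tent spaces between such a $p_0$ and $p = 2$ then yields the full range stated in Theorem~\ref{thm:lql2abs}.

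Let $A$ be a $T^{p_0, 2, m}(t^\beta dtdy)$ atom supported in $(0, r^m] \times B$ with $B = B(x_0, r)$. I would split $\R^n$ into $C_0 = 4B$ and annuli $C_k = 2^{k+2} B \setminus 2^{k+1} B$ for $k \geq 1$, and use the $p_0$-subadditivity of the tent quasi-norm:
\[
\|T_2 A\|_{T^{p_0, 2, m}(t^\beta dtdy)}^{p_0} \leq \sum_{k \geq 0} \|1_{C_k} T_2 A\|_{T^{p_0, 2, m}(t^\beta dtdy)}^{p_0}.
\]
The term $k = 0$ is controlled by the $L^2$-boundedness of $T_2$ combined with H\"older's inequality in $x$ after time truncation (the area function of $1_{C_0 \times (0, (2r)^m]} T_2 A$ is supported in an $x$-ball of radius $\sim r$). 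For $k \geq 1$, the $L^q - L^2$ decay of $K$ (Definition~\ref{def:qroff}) applied with $E = B$ and $F = C_k$ gives, for $s \leq r^m$ and $t \geq 2s$,
\[
\|1_{C_k} K(t,s) A(s,\cdot)\|_2 \lesssim t^{-1 - \frac{n}{m}(\frac{1}{q} - \frac{1}{2})} \Bigl(1 + \frac{(2^k r)^m}{t}\Bigr)^{-M} r^{n(\frac{1}{q}-\frac{1}{2})} \|A(s,\cdot)\|_2,
\]
where I used H\"older on $B$ to replace $\|A(s)\|_q$ by $r^{n(1/q-1/2)} \|A(s)\|_2$. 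Integrating in $s \in (0, \min(t/2, r^m))$ by Cauchy--Schwarz against the atomic bound $\int_0^{r^m} \|A(s)\|_2^2 s^\beta ds \leq r^{-n(2/p_0 - 1)}$ (weights controlled using $\beta < 1$) yields a pointwise-in-$t$ bound on $\|1_{C_k} T_2 A(t, \cdot)\|_2$.

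To convert this into a $T^{p_0, 2, m}(t^\beta)$-bound, I would dyadically decompose the $t$-axis into $I_\ell \sim (2^\ell (2^k r)^m, 2^{\ell+1}(2^k r)^m]$; on each time slice the area function of $1_{I_\ell \times C_k} T_2 A$ is supported in an $x$-ball of radius $\sim 2^{\ell/m+k} r$, so H\"older's inequality in $x$ converts the $L^2(t^\beta dtdy)$-estimate on that slice into a $T^{p_0, 2, m}(t^\beta)$-estimate at the cost of a factor $(2^{\ell/m+k} r)^{n(1/p_0 - 1/2)}$. Summing in $\ell$ and $k$, the H\"older growth $2^{kn(1/p_0 - 1/2)}$ must be balanced against the off-diagonal decay $2^{-kmM}$ together with the time integral. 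Case~1 ($q' \leq 2n/(m(1-\beta))$) corresponds to the $\ell$-sum converging on its own, producing the threshold $p_c$ without invoking $M$; Case~2 requires the off-diagonal decay $M$ to tame the divergence of the $t$-integral at infinity, producing both $\tilde p_c$ and the additional requirement $p_0 > p_M$. The main obstacle is the delicate bookkeeping needed to distinguish these two regimes and extract the sharp critical exponents stated in Theorem~\ref{thm:lql2abs}; a secondary subtlety is verifying via Lemma~\ref{lem:rep} that the Bochner integral representation of $T_2 A$ survives the annular truncation.
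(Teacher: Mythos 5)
Your plan works for Case 2, and there it is essentially the paper's own argument: atomic decomposition below $p=1$ via Lemma \ref{lem:atomic}, annular/dyadic splitting, the $L^q$--$L^2$ decay with H\"older on the support of the atom, and interpolation with the $p=2$ bound from Theorem \ref{AAabs}; the differences in how you index spatial annuli versus time slices are bookkeeping. But there is a genuine gap in Case 1. There, by definition, $p_c\ge 1$, so there is no exponent $p_0\le 1$ with $p_0>p_c$, and the atomic decomposition --- which only exists for $p\le 1$ --- cannot be run ``just above the critical exponent''. The obstruction is not merely formal: carrying out your estimate, the $k$-th annular piece of $T_2A$ contributes a coefficient $\lambda_k\sim 2^{-km\inf(M,v)}\,2^{kn(\frac{1}{p_0}-\frac12)}$ with $v=-\frac{\beta-1}{2}+\frac{n}{m}(\frac1q-\frac12)$, and summability of $\sum\lambda_k^{p_0}$ requires $\inf(M,v)>\frac{n}{2m}(\frac{2}{p_0}-1)\ge\frac{n}{2m}$ for $p_0\le 1$; Case 1 is precisely the regime $v\le\frac{n}{2m}$, so $T_2$ is \emph{not} uniformly bounded on $T^{p_0,2,m}(t^\beta dtdy)$ atoms for any $p_0\le 1$, and no choice of $p_0$ rescues the scheme.

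The paper's resolution, absent from your proposal, is to embed $T_2$ into the analytic family $\mathcal{J}_\alpha f(t,y)=\int_0^{t/2}(s/t)^\alpha (K(t,s)f(s,\cdot))(y)\,ds$, with $\mathcal{J}_0=T_2$. The extra factor $(s/t)^{\Re e\,\alpha}$ supplies exactly the decay near $s=0$ that is missing: for $\Re e\,\alpha>\alpha_1$, where $\alpha_1$ is chosen so that $v(\alpha_1,q)=\frac{n}{2m}$, the atomic argument closes at $p=1$ (uniformly in $\Im m\,\alpha$), while Schur's lemma gives $T^{2,2,m}(t^\beta dtdy)$-boundedness for $\Re e\,\alpha>\frac{\beta-1}{2}<0$. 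Stein's interpolation theorem for analytic families on tent spaces (via \cite{htv}) then recovers $\mathcal{J}_0$ on $T^{p,2,m}(t^\beta dtdy)$ for $p_c<p<2$, and the value of $p_c$ comes out of the interpolation parameter, not from a direct atomic estimate. You would need to add this device (or an equivalent one) to make Case 1 go through; your remark that the kernel representation must be justified via Lemma \ref{lem:rep} is correct and matches the paper.
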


\begin{proof}[Proof of Lemma \ref{lem:m1}] By interpolation { (see \cite{cms} for the case $m=1$, $\beta=-1$, and apply Remark \ref{rk:iso} to deduce the general case)} it suffices to consider $p_{M}<p\le 1$.
By Lemma \ref{lem:atomic}, it is enough to show  that $T_{1} A \in T^{p,2,m}(t^{\beta}dtdy)$ if $A$ is a
$T^{p,2,m}(t^{\beta}dtdy)$ atom, with a uniform bound.
{Since the proofs are scale invariant, we assume that}
$A$ is supported in $(0, 1] \times B(0,1)$. Then we remark that  if $t>2$,  $T_{2}A(t,\cdot)=TA(t,\cdot)$ because of the definition  of  $T_{2}$ and the support of $A$. Hence $(T_{1}A)(t,\cdot)=0$  for $t>2$. We let $f_{j}(t,y)= (T_{1} A)(t,y) $ if $2^j\le |y| <2^{j+1}$, 0 elsewhere, and $f_{0}(t,y)=(T_{1} A)(t,y)$ if $|y|\le 2$, 0 elsewhere. We show that $f_{j}=\lambda_{j}A_{j}$ with $A_{j}$ a $T^{p,2,m}(t^{\beta}dtdy)$ atom and  $\sum |\lambda_{j}|^p \lesssim 1$.

For $j=0$, this follows from the boundedness of $T_{1}$ on $T^{2,2,m}(t^{\beta}dtdy)$ as $\beta<1$. For $j\ge 1$, we  argue as follows:
\begin{align*}
 \int\limits_{B(0,2^{j+1})} \int \limits _{0} ^{2^{(j+1){m}}}& |f_{j}(t,y)|^2 t^{\beta}\,dtdy
\\
&=  \int \limits _{0} ^{2} \int\limits_{2^j\le |y| <2^{j+1}} |(T_{1}A)(t,y)|^2\, dy \,  t^{\beta}\,dt
\\
&{=   \int \limits _{0} ^{2}\int\limits_{2^j\le |y| <2^{j+1}}  \bigg\vert\int \limits _{\frac{t}{2}} ^{t} \bigg(\frac{t-s}{t-s}\bigg)^{\varepsilon-\frac{1}{2}} (K(t,s) A(s,\cdot))(y)  \, ds\bigg\vert^2dy \,  t^{\beta}\,dt
}\\
& \lesssim   \int \limits _{0} ^{2}\int\limits_{2^j\le |y| <2^{j+1}}  \int \limits _{\frac{t}{2}} ^{t} t^{2\varepsilon}(t-s)^{1-2\varepsilon  } |(K(t,s) A(s,\cdot))(y)|^2  \, dsdy \,  t^{\beta}\,dt
\\
& \lesssim   \int \limits _{0} ^{2} \int \limits _{\frac{t}{2}} ^{t} t^{2\varepsilon}\frac 1{(t-s)^{1+2\varepsilon +\frac{2n}{m}(\frac{1}{q}-\frac{1}{2})} }\Bigl(1+\frac{2^{jm}}{t-s}\Bigr)^{-2M}
\|A(s,.)\|_{q} ^{2}\, t^{\beta}\, ds\,dt
\\
& \lesssim
 \int \limits _{0} ^{1} \|A(s,.)\|_{2} ^{2} s^{\beta}   s^{2\varepsilon} \int \limits _{s} ^{2s}\frac 1{(t-s)^{1+2\varepsilon + \frac{2n}{m}(\frac{1}{q}-\frac{1}{2})}}
\Bigl(1+\frac{2^{jm}}{t-s}\Bigr)^{-2M}\, dt  ds
\\
& \lesssim    2^{-2jmM} \int \limits _{0} ^{1}  \|A(s,.)\|_{2} ^{2}\, s^{\beta}ds.
\end{align*}
{We used Cauchy-Schwarz inequality in the fourth line and $t^{2\varepsilon}\eqsim  \int  _{\frac{t}{2}} ^{t} {(t-s)}^{2\varepsilon-1}\, ds$ when $\varepsilon>0$. In the next to  last line, we impose }
 {$\varepsilon<M- \frac{n}{m}(\frac{1}{q}-\frac{1}{2})$}, which is possible as
$M>\frac{n}{2m} $ and $q\ge 1$.
The estimate $\|A(s,.)\|_{q}  \lesssim \|A(s,.)\|_{2}$  uses the fact that $A(s,\cdot)$ is supported in $B(0,1)$. As $\gamma=2mM - n(\frac{2}{p}-1)>0$, we thus get the desired estimate with $\lambda_{j}=C2^{-j\gamma/2}$. We also remark that we implicitly used Lemma \ref{lem:rep}, {which is possible since} the last four lines yield the required estimate to write $T_{1}A(t,y)=\int\limits_{\frac{t}{2}}^t (K(t,s)A(s,\cdot))(y)\, ds$ on the support of $f_{j}$.
\end{proof}

\begin{proof}[Proof of Lemma \ref{lem:m2}] We imbed $T_{2}$ into an analytic family of integral operators $\mathcal{J}_{\alpha}$ defined  for $\alpha\in \C$ by
$$
\mathcal{J}_{\alpha}f(t,y)= \int \limits ^{\frac t 2} _{{0}{}} \left(\frac{s}{t}\right)^\alpha (K(t,s)f(s,\cdot))(y)ds.
$$
Observe that
$$
\iint\limits_{\R^{n+1}_{+}}|\mathcal{J}_{\alpha}f(t,y)|^2 t^\beta dtdy=  \iint\limits_{\R^{n+1}_{+}}\bigg|
 \int \limits ^{\frac t 2} _{{0}{}} \bigg(\frac{s}{t}\bigg)^{\alpha- \frac{\beta-1}{2}} (tK(t,s)(s^{\frac{\beta+1}{2}}f(s,\cdot)))(y)\frac{ds}{s}\bigg|^2\, \frac{dtdy}{t}.
 $$
 An application of Schur's lemma, using that $t\sim t-s$ and the uniform boundedness of $tK(t,s)$,
 shows that, provided $ \Re e\,  \alpha  - \frac{\beta-1}{2}>0$, the last integral is bounded by
 $$ C\bigg( \Re e\, \alpha  - \frac{\beta-1}{2}\bigg)  \iint\limits_{\R^{n+1}_{+}}
 |s^{\frac{\beta+1}{2}}f(s,x)|^2\, \frac{dsdx}{s}= C\bigg( \Re e\, \alpha  - \frac{\beta-1}{2}\bigg)  \iint\limits_{\R^{n+1}_{+}}
 |f(s,x)|^2\, s^\beta{dsdx}.
 $$
 Hence, $\mathcal{J}_{\alpha}$ is well-defined for $ \Re e\, \alpha  > \frac{\beta-1}{2}$ and bounded on $T^{2,2,m}(t^{\beta}dtdy)$ for all $m$. Notice that $\beta<1$ implies that this domain contains $\alpha=0$ and $\mathcal{J}_{0}= T_{2}$.

 Now we let $A$ be a $T^{p,2,m}(t^{\beta}dtdy)$ atom and estimate $\mathcal{J}_{\alpha}A$.   {Since the proof below is scale invariant,} we assume that $A$ is supported in $(0,1]\times B(0,1)$. We let
{$$ f_{j}(t,y)= \begin{cases}
(\mathcal{J}_{\alpha} A)(t,y) \; \text{if} \; 2^j\le |y| <2^{j+1} \; \text{and} \; t<2^{jm},\\
(\mathcal{J}_{\alpha} A)(t,y) \; \text{if} \; |y| <2^{j+1} \; \text{and}\; 2^{jm}\le t < 2^{(j+1)m},\\
0 \; \text{otherwise},
 \end{cases}
 $$
 }
{for $j\ne 0$ and $f_{0}(t,y)=(\mathcal{J}_{\alpha} A)(t,y)$ if $|y|\le 2$ and $t<2^m$, 0 elsewhere, so that $\mathcal{J}_{\alpha} A= f_{0}+f_{1}+\ldots$}

By the boundedness property of $\mathcal{J}_{\alpha}$, we get
$$
\int\limits_{B(0,2)}\int\limits_{0}^{2^m}|f_{0}(t,y)|^2 t^\beta dtdy \le
C\bigg( \Re e\, \alpha  - \frac{\beta-1}{2}\bigg) \int\limits_{B(0,1)}\int\limits_{0}^{1}|A(s,x)|^2 s^\beta dsdx
\le C\bigg( \Re e\, \alpha  - \frac{\beta-1}{2}\bigg).
$$

 Next,
 \begin{align*}
 \int\limits_{B(0,2^{j+1})}\int\limits_{0}^{2^{(j+1)m}}|f_{j}(t,y)|^2 t^\beta dtdy
 &= \int\limits_{2^j<|y|<2^{j+1}}\int\limits_{0}^{2^{jm}}|f_{j}(t,y)|^2 t^\beta dtdy \\
 \\& \qquad + \int\limits_{|y|<2^{j+1}}\int\limits_{2^{jm}}^{2^{(j+1)m}}|f_{j}(t,y)|^2 t^\beta dtdy.
\end{align*}
 Call $I_{j}$ and $J_{j}$ the square roots of  the first and second integrals.   For $I_{j}$, we split
 the integral in $s$ defining $\mathcal{J}_{\alpha}A(t,y)$ as
 $$
 \sum_{k\ge 1}\  \int\limits _{2^{-k-1}t}^{2^{-k}t} \bigg(\frac{s}{t}\bigg)^\alpha (K(t,s)A(s,\cdot))(y)\, ds
 $$
 so that by Minkowski inequality $I_{j}\le \sum_{k\ge 1}I_{j,k}$ with
 $$
 I_{j,k}^2 =  \int\limits_{2^j<|y|<2^{j+1}}\int\limits_{0}^{2^{jm}} \Big| \int\limits _{2^{-k-1}t}^{2^{-k}t} \bigg(\frac{s}{t}\bigg)^\alpha(K(t,s)A(s,\cdot))(y)\, ds\Big|^2 t^\beta dtdy.
 $$
 Using Cauchy-Schwarz inequality in the $s$ integral
 and then the $L^q-L^2$ decay with $t\sim t-s$, we get
\begin{align*}
I_{j,k}^2 &\lesssim \int\limits_{0}^{2^{jm}}2^{-k} t  \int\limits_{2^{-k-1}t}^{2^{-k}t} \bigg(\frac{s}{t}\bigg)^{2\Re e\,\alpha}\frac{1}{t^{2+\frac{2n}{m}(\frac{1}{q}-\frac{1}{2})}}\Big(1+\frac{2^{jm}}{t}\Big)^{-2M}\,
\|A(s,\cdot)\|_{q}^2\,{ds}\ t^\beta dt
\\
& \lesssim  2^{-2jmM}  \int\limits_{0}^{2^{jm}} 2^{-k} t  \int\limits_{2^{-k-1}t}^{2^{-k}t} 2^{-2k\Re e\,\alpha}\frac{1}{t^{2+\frac{2n}{m}(\frac{1}{q}-\frac{1}{2})-2M}}
\|A(s,\cdot)\|_{2}^2\,{ds}\ t^\beta dt
\\
&
\lesssim   2^{-2jmM} 2^{k(-2 \Re e\, \alpha   + \beta-1)} \int\limits_{0}^{2^{jm-k}}  \|A(s,\cdot)\|_{2}^2\, s^\beta (2^ks)^{2M- \frac{2n}{m}(\frac{1}{q}-\frac{1}{2})}\, ds.
\end{align*}
Recall that  the support condition on $A$  forces $s\le 1$. Also $M> \frac{n}{2m}\ge \frac{n}{m}(\frac{1}{q}-\frac{1}{2})$. Using also the size requirement on $A$ we obtain
$$I_{j,k}^2 \lesssim
 2^{-2jmM} 2^{k(-2 \Re e\, \alpha   + \beta-1)}2^{\inf(k, jm)(2M- \frac{2n}{m}(\frac{1}{q}-\frac{1}{2}))} .
 $$
Hence, $\sum_{k\ge 1} I_{j,k}$ is controlled by $2^{-jm\inf (M, v(\alpha,q))}$ with $v(\alpha,q)= \Re e\,\alpha- \frac{\beta-1}{2}+ \frac{n}{m}(\frac{1}{q}-\frac{1}{2})$ if $M\ne v(\alpha,q)$ and by
$jm2^{-jmM}$ if $M= v(\alpha,q)$.

Next, for the second integral, we remark that the support of $A$ forces $s\le 1$ while $t\sim 2^{jm}\ge 2$. Hence
 \begin{align*}
\label{est:low}
J_{j}^2
&\leq
\int\limits_{|y|<2^{j}}\int\limits_{2^{jm}}^{2^{(j+1)m}}  \int\limits_{0}^{1} \bigg(\frac{s}{t}\bigg)^{2\Re e\,\alpha {-(\beta-1)}}
\Bigl|{t}(K(t,s){s^{\frac{\beta+1}{2}}}A(s,\cdot))(y)\Bigr|^2\,{\frac{ds}{s} \frac{dt}{t}}
 \\
 &
\lesssim \int\limits_{2^{jm}}^{2^{(j+1)m}}  \int\limits_{0}^{1} \bigg(\frac{s}{t}\bigg)^{2\Re e\,\alpha{-(\beta-1)}}\frac{{t^{2}}}{t^{\frac{2n}{m}(\frac{1}{q}-\frac{1}{2})+2}}\, \|{s^{\frac{\beta+1}{2}}}A(s,\cdot)\|_{q}^2\,{\frac{ds}{s} \frac{dt}{t}} \\
 &\lesssim
2^{-j(2(\Re e\,\alpha- \frac{\beta-1}{2})+ \frac{2n}{m}(\frac{1}{q}-\frac{1}{2}))m}= 2^{-2jmv(\alpha,q)}.
\end{align*}
We used H\"older's inequality, the size requirement on $A$, and also $s^{2\Re e\,\alpha- (\beta-1)}\le 1$.
In all
$$
\bigg(\ \int\limits_{|x|<2^{j+1}}\int\limits_{0}^{2^{(j+1)m}}|f_{j}(t,y)|^2 t^\beta dtdy\bigg)^{\frac{1}{2}} \lesssim (1+jm)2^{-jm\inf (M, v(\alpha,q))}.$$

We now start the discussion.
{Case (2) corresponds to} $v(0,q)>\frac{n}{2m}$. The exponent $\tilde p_{c}$ {is such that}  $v(0,q)=\frac{n}{2m}(\frac{2}{\tilde p_{c}}-{1})$.  By Lemma \ref{lem:atomic}, $\mathcal{J}_{0}$ extends to a bounded map on $ T^{p,2,m}(t^{\beta}dtdy)$ for any $p\le 1$ with
{$\frac{n}{2m}(\frac 2 p - 1) { \geq }\inf (M, v(0,q))$}, which means $1\ge p>\sup(p_{M},\tilde p_{c})$. By interpolation with the $p=2$ result, $\mathcal{J}_{0}$ extends to a bounded map on $ T^{p,2,m}(t^{\beta}dtdy)$ for $\sup(p_{M},\tilde p_{c})< p \le 2$.

{Case (1) corresponds to} $v(0,q)\le \frac{n}{2m}$. Let $\alpha_{1}>0$ be such that $v(\alpha_{1},q)=\frac{n}{2m}$. As in the preceding case, for any $\alpha$ with $ \Re e\, \alpha >\alpha_{1}$,  $\mathcal{J}_{\alpha}$ extends to a bounded map on $ T^{1,2,m}(t^{\beta}dtdy)$ and by checking the proof above, the bound does not depend on $ \Im m\, \alpha $. By the $p=2$ case, if $\alpha_{2}=\frac{\beta-1}{2}<0$, then  for any $\alpha$ with $ \Re e\, \alpha  >\alpha_{2}$,  $\mathcal{J}_{\alpha}$ extends to a bounded map on $ T^{2,2,m}(t^{\beta}dtdy)$ and  the bound does not depend on $ \Im m\, \alpha $. Hence, by Stein's interpolation theorem for analytic families extended to tent spaces (see \cite{htv} for its extension to the tent  spaces $T^{p,2}$ with $p\ge 1$),  $\mathcal{J}_{0}$ extends to a bounded map on $ T^{p,2,m}(t^{\beta}dtdy)$ for $p_{c}<p<2$ and $p_{c}$ is the exponent with $\frac{1}{p_{c}}=\frac{\theta}{1}+\frac{1-\theta}{2}$ when  $0= \theta \alpha_{1}+(1-\theta)\alpha_{
 2}$. A calculation yields the explicit formula of the statement.
\end{proof}

\begin{remark}
Note that the most restrictive conditions on $p$ come from the tail operator $T_{2}$, not the singular one $T_{1}$, which is contrary  to usual feeling for singular integral operators. This can be understood by noticing that this tail operator contains {the terms where $s$ is close to $0$,}
and some decay is required to control the  tent space norms {near this boundary}.
\end{remark}

We next give a result for operators in $\mT^-_{m,q,M}$ when $q\le 2$.

\begin{proposition}\label{prop:adjoint}
Let $\beta > -1,\,m \in \N^*$, $T\in \mT^-_{m,q,M}$ with $1\le q \le 2$ and $M> \frac{n}{2m}.$ Let $p_{M}<1$ be such that $M=\frac{n}{2m}(\frac{2}{p_{M}-1})$.
Then $T$ extends to a bounded operator on $T^{p,2,m}(t^\beta dt dx)$ for $p_{M}<p<2$.
\end{proposition}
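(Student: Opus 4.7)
The plan is to follow the atomic blueprint of Lemmas \ref{lem:m1} and \ref{lem:m2}, but to exploit a structural asymmetry between $\mT^+$ and $\mT^-$. Decompose $T = T_1 + T_2$ with
\[
T_1 f(t) = \int_t^{2t} K(t,s)f(s,\cdot)\, ds, \qquad T_2 f(t) = \int_{2t}^\infty K(t,s)f(s,\cdot)\, ds.
\]
By Theorem \ref{AAabs}, $T$ is bounded on $T^{2,2,m}(t^\beta dtdy)\simeq \mH_\beta$ for $\beta>-1$, and since $\|K(t,s)\|\lesssim 1/(s-t)\sim 1/s$ on $s\in (t,2t)$, the weighted Hardy inequality controls $T_1$ there; hence $T_2$ is bounded on $\mH_\beta$ as well. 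Crucially, for an atom $A$ supported in $(0,1]\times B(0,1)$ (the general case reduces to this by scale invariance), both $T_1A$ and $T_2A$ vanish for $t>1$. This is the key difference from the $\mT^+$ case: the ``tail'' of a $\mT^-$ operator acting on an atom still has bounded time support, so no analytic family or analogue of the $p_c$ exponent of Theorem \ref{thm:lql2abs} is needed.

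By Lemma \ref{lem:atomic} and interpolation with the $p=2$ estimate, it suffices, for $p_M<p\leq 1$, to decompose $T_iA$ atomically with $p$-summable coefficients. For each $i\in\{1,2\}$ and $j\geq 1$, let $f_j = \mathbf{1}_{\{2^j\le |y|<2^{j+1}\}}T_iA$, and $f_0 = \mathbf{1}_{\{|y|\le 2\}}T_iA$. Then $f_0$ is controlled by the $L^2(t^\beta dtdy)$ boundedness already established, while each $f_j$ is supported in $(0,1]\times B(0,2^{j+1})\subset (0,2^{(j+1)m}]\times B(0,2^{j+1})$. The target bound is $\|f_j\|_{L^2(t^\beta dtdy)}\lesssim 2^{-jmM}$, which, combined with atom normalization, produces $|\lambda_j|\lesssim 2^{-j(mM - n(1/p-1/2))}$, so that $\sum_j|\lambda_j|^p<\infty$ exactly when $p>p_M=2n/(2mM+n)$.

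To achieve the bound $\|f_j\|_{L^2(t^\beta dtdy)}\lesssim 2^{-jmM}$, one inserts a weight in the $s$-integration and applies Cauchy--Schwarz: for $T_1$, the singular weight $(s-t)^{\varepsilon-1/2}$ (as in Lemma \ref{lem:m1}); for $T_2$, a weight of the form $s^{\delta/2}$ to handle the range $s\in(2t,1)$. One then invokes the $L^q$--$L^2$ off-diagonal decay with $\mathrm{dist}(B(0,1), \{2^j\leq |y|<2^{j+1}\})\sim 2^j$ and $|t-s|\leq 1\leq 2^{jm}$, so that $(1+2^{jm}/|t-s|)^{-2M}\le 2^{-2jmM}|t-s|^{2M}$. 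Carrying out the $s$- then $t$-integrations (using $\beta>-1$ for the $t$-integral in the $T_2$ case), one is left with $\|A(s,\cdot)\|_q^2$ multiplied by a power of $s$ which is non-negative precisely because $M>(n/m)(1/q-1/2)$, a consequence of $M>n/(2m)$ and $q\geq 1$. The bounds $\|A(s,\cdot)\|_q\lesssim \|A(s,\cdot)\|_2$ (since $A(s,\cdot)$ is supported in $B(0,1)$) and the atom size $\int_0^1 \|A(s,\cdot)\|_2^2\, s^\beta\,ds\leq 1$ then close the estimate; Lemma \ref{lem:rep} justifies the integral representations throughout. The main technical point to watch is the choice of $\delta$ in the $T_2$ estimate so that the $s$-integration produces a convergent (rather than logarithmic) contribution at $s=0$, which is precisely what $\beta>-1$ combined with the borderline condition $M>n/(2m)$ ensures.
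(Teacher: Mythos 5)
Your proof is correct and follows the same basic strategy as the paper's: atomic decomposition via Lemma \ref{lem:atomic}, spatial annuli $2^j\le |y|<2^{j+1}$, weighted Cauchy--Schwarz in $s$, and the $L^q$--$L^2$ decay producing the factor $2^{-2jmM}$ that makes the coefficients $p$-summable exactly for $p>p_M$. You also correctly identify the structural reason why no analytic family or $p_c$-type exponent is needed here, in contrast to Theorem \ref{thm:lql2abs}: for $T\in \mT^-$ the whole of $TA$ inherits the time support $(0,1]$ of the atom, so only the spatial index generates new atoms. The one genuine difference is the treatment of the tail $\int_{2t}^{\infty}$: the paper decomposes it into dyadic pieces $\int_{2^kt}^{2^{k+1}t}$, obtaining a doubly indexed family $T_{k,j}A=\lambda_{k,j}A_{k,j}$ with $\lambda_{k,j}\cong 2^{(\frac n2(\frac 2p-1)-Mm)j}\,2^{-\frac k2(\beta+1)}$ and using $\beta>-1$ to sum the geometric series in $k$, whereas you handle $\int_{2t}^{1}$ in one stroke with a weight $s^{\delta/2}$, $\delta<1$, chosen so that the leftover power of $s$ is nonnegative thanks to $M>\frac nm(\frac 1q-\frac 12)$, with $\beta>-1$ entering only through $\int_0^{s/2}t^\beta\,dt\sim s^{\beta+1}$. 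Both routes are of comparable difficulty; yours saves one summation, the paper's makes the dependence on $\beta+1$ explicit in the coefficients. One slip to correct: in your justification of the $\mH_\beta$ boundedness the roles of $T_1$ and $T_2$ are interchanged --- it is on $s\in(2t,\infty)$ that $s-t\sim s$, so the weighted Hardy inequality directly controls the tail $T_2$ on $\mH_\beta$ for $\beta>-1$, and then $T_1=T-T_2$ is bounded by Theorem \ref{AAabs}; on $s\in(t,2t)$ the bound $1/(s-t)$ is not comparable to $1/s$. This does not affect the rest of the argument.
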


\begin{proof} By interpolation, it suffices to treat the case  $p_{M}<p\le 1$.  Take such a $p$.
Let $A$ be a $T^{p,2,m}(t^\beta dt dy)$ atom, i.e. a function
supported in some $(0,r^m] \times B(x_0,r),$ and satisfying
\[ \iint\limits_{\R^{n+1}_+} |A(s,x)|^2 s^\beta ds dx \leq r^{-n(\frac{2}{p}-1)}. \]
For $j \in \N,$ let $B_j = (0,(2^{j}r)^{ m}] \times B(x_0,2^jr) \subset \R^{n+1}_+$ and $C_j = B_j \backslash B_{j-1}$
(with $B_{-1} = \emptyset$).
For $k,j \in \N,$ and $(k,j)\ne (0,0)$ we let
\[ T_{k,j} A(t,y) = 1_{C_j}(t,y)\int\limits_{2^{k} t}^{2^{k+1} t} (K(t,s) A(s,\cdot))(y) ds \]
and \[ (T_{0,0} A)(t,y)= 1_{B_0}(t,y)(T_{1}A)(t,y)\]
where $T_{1}$ is the singular part of $T$.

We claim that, for a sequence $\lambda_{k,j} > 0,$ which is independent of $A$ and satisfies $\sum_{k,j = 0}^\infty \lambda_{k,j} < \infty,$ we have
\begin{equation*}\label{equ:proof adjoint}
\iint\limits_{B_j} |T_{k,j} A(t,y)|^2 t^\beta dt dy \leq (2^jr)^{-n(\frac{2}{p}-1)} \lambda_{k,j}^2,
\end{equation*}
so
$\lambda_{k,j}^{-1} T_{k,j} A$ is a $T^{p,2,m}(t^\beta dt dy)$ atom.
Note that $\sum_{k\ge 1, j\ge 0} T_{k,j}A= T_{2}A$.
Using Lemma \ref{lem:rep} a posteriori, we have $T_{1}A=\sum_{j\ge 0} T_{0,j}A$. Hence $\sum_{k\ge 0, j\ge 0} T_{k,j}A= TA$ and thus   $\| T A \|_{T^{p,2,m}(t^\beta dt dy)} \lesssim \sum_{k,j =0}^\infty \lambda_{k,j}.$
By Lemma \ref{lem:atomic}, we are then able to conclude the proof.
It remains to prove the claim.

{The proof is scale and translation invariant so} we assume that $x_0 = 0$ and $r = 1.$
For $j \geq 1,$ we have
\begin{align*}
\iint\limits_{B_j}
& \left| T_{k,j} A(t,y) \right|^2 t^\beta dt dy
 \leq \iint\limits_{C_j}(2^k t)^{2 \epsilon}  \int\limits_{2^k t}^{2^{k+1} t} (s-t)^{1 - 2\epsilon} \left| (K(t,s)A(s,\cdot) )(y)\right|^2 ds\, t^\beta dtdy.
\end{align*}
Here we have used the Cauchy-Schwarz inequality {as in the proof of Lemma \ref{lem:m1} and  the parameter $\epsilon > { 0}$ will be determined later.}
Write   $C_j = (0,2^{(j-1)m}] \times \left[ B(0,2^{j}) \backslash B(0,2^{j-1}) \right] \cup [2^{(j-1)m},2^{jm}] \times B(0,2^j) =: C_j^{(1)} \cup C_j^{(2)}.$
If $(t,y) \in C_j^{(2)},$ then $t \geq 2^{(j-1)m} \geq 1,$ and if $s > 2^{k}t \geq 1,$ then $A(s,\cdot) = 0.$
Thus, we can replace $C_j$ by $C_j^{(1)}$ in the above multiple integral and impose $t\le 1$.
Then we can apply the $L^q-L^2$ decay  with $F = B(0,2^j)\backslash B(0,2^{j-1})$ and $E = B(0,1)$  to continue estimating as follows
\begin{align*}
& \leq \int\limits_0^{1}  (2^k t)^{2 \epsilon}\int\limits_{2^k t}^{2^{k+1}t}  \frac{1}{(s-t)^{1+2\varepsilon+\frac{2n}{m}(\frac{1}{q}-\frac{1}{2})}} ( 1 + \frac{2^{jm}}{s-t})^{-2M} \left\| A(s,\cdot)\right\|^2_{q} ds \,  t^\beta dt  \\
& \cong  \int\limits_0^1 (2^k t)^{2 \epsilon} \int\limits_{2^k t}^{2^{k+1}t} \frac{1}{(s-t)^{1+2\varepsilon+ \frac{2n}{m}(\frac{1}{q}-\frac{1}{2})}}(1 + \frac{2^{jm}}{s-t})^{-2M} \left\|{ A(s,\cdot)}\right\|^2_{2} ds  \, t^\beta dt  \\
& = 2^{2k \epsilon}  \int\limits_0^{2^{k+1}}  \int\limits_{2^{-k-1}s}^{2^{-k}s}  \frac{t^{\beta+2\varepsilon}}{(s-t)^{\frac{2n}{m}(\frac{1}{q}-\frac{1}{2})+1+2\varepsilon}} ( 1 + \frac{2^{jm}}{s-t} )^{-2 M} dt \, \|A(s,\cdot)\|^2_{2}\, ds  \\
& \cong 2^{2k \epsilon}  \int\limits_0^1 ( 2^{-k}s)^{\beta + 2 \epsilon} \int\limits_{2^{-k-1}s}^{2^{-k}s} \frac{1}{(s-t)^{\frac{2n}{m}(\frac{1}{q}-\frac{1}{2})+1+2\varepsilon}}(\frac{2^{jm}}{s-t})^{-2M} dt\,  \|A(s,\cdot)\|^2_{2}\, ds .
\end{align*}
We take $\epsilon \in ({0},M- \frac{n}{m}(\frac{1}{q}-\frac{1}{2}))$ so that the integral with respect to  $t$ converges.
Indeed, $M>\frac{n}{2m} \ge  \frac{n}{m}(\frac{1}{q}-\frac{1}{2})$ and the calculation continues as follows:
\begin{align*}
& \cong 2^{2k\epsilon} 2^{-k(\beta + 2 \epsilon)}2^{-2Mmj} 2^{-k}  \int\limits_0^1 s^{2M-\frac{2n}{m}(\frac{1}{q}-\frac{1}{2}) - 2 \epsilon} \|A(s,\cdot)\|^2_{2}\,  s^{\beta + 2 \epsilon} ds  \\
& \lesssim   2^{-k(\beta + 1)} 2^{-2Mmj}\int\limits_0^1\|A(s,\cdot)\|^2_{2} \, s^\beta ds \\
& \leq 2^{-jn(\frac{2}{p}-1)} \lambda_{k,j}^2
\end{align*}
with $\lambda_{k,j} \cong 2^{(\frac n2(\frac{2}{p}-1) - Mm)j} 2^{-\frac{k}{2}(\beta + 1)}$, and we used $M>\frac{n}{2m}\ge  \frac{n}{m}(\frac{1}{q}-\frac{1}{2})$.

If $j=0$ and $k \geq 1,$ we do not use the decay but rather the fact that $(t-s)K(t,s)$ is uniformly bounded on $L^2(\R^n).$
Then we can repeat the above calculation literally taking $q=2$ and $M=0$.

If $k=0$ and $ j=0,$ using the boundedness of $T_{1}$ since $\beta>-1$,
\[ \int\limits_{B(0,2)} \int\limits_0^{2^m} \left| (T_{0,0}A)(t,y) \right|^2 t^\beta dt dy \leq C \int\limits_{B(0,1)} \int\limits_0^1 |A(s,x)|^2 s^\beta ds dx. \]
We conclude that $\lambda_{k,j} \cong 2^{(\frac n2(\frac{2}{p}-1) - Mm)j} 2^{-\frac{k}{2}(\beta + 1)}$ is summable for $\beta > -1$ and $M > \frac{n}{2m}(\frac{2}{p}-1).$
\end{proof}

\section{Role of $L^2-L^q$ decay}
{When $q\ge 2$,   $L^2-L^q$ decay can be used to quantify $T^{p,2}$ results for $p$ above 2.} Clearly the adjoint class to $\mT^\pm_{m,q,M}$ is $\mT^\mp_{{m,q',M}}$ with respect to the
inner product $$\langle f , g \rangle = \int\limits_{\R^n} \int\limits_0^\infty f(t,y) \overline{g(t,y)} dt dy.$$
It is easy to deduce from \cite[Section 5]{cms} that
for $p \in (1,\infty),m \in \N^*$ and $\beta \in \R,$ we have $\left[T^{p,2,m}(t^\beta dt dy)\right]' = T^{p',2,m}(t^{-\beta} dt dy),$
with duality given by $\langle f, g\rangle,$ i.e.
\[\|f\|_{T^{p,2,m}(t^\beta dt dy)} \sim \sup_{\|g\|_{T^{p',2,m}(t^{-\beta} dt dy)} \leq 1}|\langle f , g \rangle|. \]

Thus, we obtain results for $2<p<\infty$ by dualizing Theorem \ref{thm:lql2abs} and Proposition \ref{prop:adjoint} in the classes $\mT^\pm_{m,q,M}$ with $2\le q\le \infty$ and $M>\frac n {2m}$. In addition, the results for $p=\infty$ also hold.

\begin{theorem}\label{thm:lql2absdual}   Let $T\in \mT^-_{m,q,M}$ with  $2\le q\le \infty$ and  $M>\frac{n}{2m}$.  Let $\beta>-1$.
 \begin{enumerate}
  \item  If $q\le \frac{2n}{m(1-\beta)}$ or equivalently  $\frac{n}{2m} \ge -\frac{\beta-1}{2} + \frac{n}{m}(\frac{1}{2}-\frac{1}{q})$
   then
 ${T}$  extends to a bounded operator on
 $T^{p,2,m}(t^{\beta}dtdy)$ when $2\le p<p_{c}'$, where
 $$p_{c}=\frac{2\left(\frac{n}{2m}-\frac{n}{m}\left(\frac{1}{2}-\frac{1}{q}\right)\right)}{\frac{n}{2m}-\frac{n}{m}\left(\frac{1}{2}-\frac{1}{q}\right)+ \frac{1-\beta}{2}}= \frac{4n}{2n + m(1-\beta)q}\ .
$$
 \item
 If $q> \frac{2n}{m(1-\beta)}$ or equivalently   $  -\frac{\beta-1}{2} + \frac{n}{m}(\frac{1}{2}-\frac{1}{q})> \frac{n}{2m}$
 then
 $T$  extends to a bounded operator on
 $T^{p,2,m}(t^{\beta}dtdy)$ when $2\le p\le \infty$.
  \end{enumerate}
\end{theorem}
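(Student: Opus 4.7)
The plan is to combine tent space duality with Theorem \ref{thm:lql2abs} to handle $2 \le p < \infty$, and then treat the endpoint $p = \infty$ in case (2) by a separate direct Carleson-type argument in the spirit of Theorem \ref{thm:infini}. The tent space duality $[T^{p,2,m}(t^\beta dtdy)]' = T^{p',2,m}(t^{-\beta} dtdy)$ with the unweighted pairing $\langle f, g \rangle = \iint f \overline{g}\, dtdy$ has already been recorded; under the same pairing, any $T \in \mT^-_{m,q,M}$ with $q \ge 2$ has adjoint $T^* \in \mT^+_{m,q',M}$ with $q' \le 2$, the kernel $K^*(s,t) = K(t,s)^*$ inheriting $L^{q'}-L^2$ decay from the $L^2-L^q$ decay of $K(t,s)$ by duality. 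Since $\beta > -1$ forces $-\beta < 1$, Theorem \ref{thm:lql2abs} applies to $T^*$ on the dual tent space, and its boundedness for $1 < p' \le 2$ transfers to boundedness of $T$ for $2 \le p < \infty$.

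The two cases of the statement are then obtained by matching against the two cases of Theorem \ref{thm:lql2abs} applied to $T^*$, under the substitutions $q \leftrightarrow q'$ and $\beta \leftrightarrow -\beta$. In case (1), the stated hypothesis should correspond to case (1) there, yielding $T^*$ bounded for $p' \in (p_c^{\mathrm{abs}}, 2]$; dualizing gives $T$ bounded for $p \in [2, (p_c^{\mathrm{abs}})')$, and a direct computation of the dual exponent reproduces the explicit $p_c'$ in the statement. In case (2), both thresholds $p_M$ and $\tilde p_c$ from Theorem \ref{thm:lql2abs} lie strictly below $1$, so $T^*$ is bounded on the entire range $1 < p' \le 2$; by duality $T$ is bounded on $T^{p,2,m}(t^\beta dtdy)$ for every $2 \le p < \infty$.

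The endpoint $p = \infty$ in case (2) cannot be reached by duality, since $[T^{1,2,m}]'$ is strictly larger than $T^{\infty,2,m}$, so a direct argument is required. I would fix a Carleson ball $B = B(x_0, r) \subset \R^n$ and estimate $r^{-n} \iint_{B \times (0, r^m]} |Tg(t,y)|^2 t^\beta \, dtdy$ by decomposing the $s$-integral in $Tg(t,y) = \int_t^\infty (K(t,s) g(s, \cdot))(y)\, ds$ into a local part (corresponding to a fixed space-time dilate of $B \times (0, r^m]$) and dyadic annular parts in the spatial variable, with further dyadic splitting in time when needed to account for the backward nature of $T$. The local part is controlled by Theorem \ref{AAabs}, using that the Carleson condition on $g$ bounds its weighted $L^2$ norm on dilates of $B$. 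Each annular part is handled via the $L^2-L^q$ decay of $K(t,s)$ combined with H\"older's inequality (to convert the Carleson bound on $g$ into an $L^q$ bound at each time slice), and the resulting series is summed using the polynomial decay of order $M > \frac{n}{2m}$. The main technical obstacle will be this annular summation: the hypothesis $q > \frac{2n}{m(1-\beta)}$ is precisely what is needed to make the space-time dyadic series converge, reflecting the same balance of exponents that governs case (2) of Theorem \ref{thm:lql2abs}.
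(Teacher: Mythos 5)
Your overall strategy --- duality with Theorem \ref{thm:lql2abs} for $2\le p<\infty$ plus a direct Carleson-type argument for the $p=\infty$ endpoint of case (2) --- is exactly the route the paper takes: the section's preamble obtains the finite-$p$ range by dualizing Theorem \ref{thm:lql2abs}, and the written proof of Theorem \ref{thm:lql2absdual} is precisely the $p=\infty$ computation you sketch (decomposition $f=\sum f_j$ over $C_j=B_j\setminus B_{j-1}$, the local term via Theorem \ref{AAabs}, the annular terms via $L^2-L^q$ decay after H\"older in $y$, and an extra dyadic splitting of the $s$-integral over the intervals $(2^kt,2^{k+1}t)$ to handle the backward-reaching kernel). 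Your identification of $T^*\in \mT^+_{m,q',M}$ and of the weight flip $\beta\mapsto-\beta$ under the unweighted pairing is also correct.

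There is, however, one step you assert without carrying out, and it does not check out against the statement as printed. In Theorem \ref{thm:lql2abs} the roles of $(\tilde q,\tilde\beta)$ are played, after dualizing, by $(q',-\beta)$; its case (1) condition $\tilde q'\le\frac{2n}{m(1-\tilde\beta)}$ therefore becomes $q\le\frac{2n}{m(1+\beta)}$, and the critical exponent becomes $\frac{4n}{2n+m(1+\beta)q}$ --- that is, $\beta$ is replaced by $-\beta$ relative to the formulas in the statement you are proving. The same $1+\beta$ (rather than $1-\beta$) emerges from the direct endpoint computation: after exchanging the order of integration one finds $I_{j,k}^2\lesssim 2^{-2jmM}\,2^{-k(1+\beta)}\min(2^k,2^{jm})^{2M-\frac{2n}{m}(\frac12-\frac1q)}\,2^{jn}\|f\|^2_{T^{\infty,2,m}(t^\beta dtdy)}$, so the $k$-sum converges because $\beta>-1$, while the $j$-sum requires $\frac{n}{2m}<\frac{n}{m}\bigl(\frac12-\frac1q\bigr)+\frac{1+\beta}{2}$, i.e.\ $q>\frac{2n}{m(1+\beta)}$. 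Consequently, either the printed statement carries a sign slip in $\beta$ (in which case your argument proves the corrected statement verbatim), or, taken at face value, your proof leaves a genuine gap: for $-1<\beta<0$ and $\frac{2n}{m(1-\beta)}<q\le\frac{2n}{m(1+\beta)}$ the statement's case (2) claims the full range $2\le p\le\infty$ but neither the duality step nor the endpoint estimate delivers it. You must do this substitution explicitly rather than asserting that ``a direct computation of the dual exponent reproduces the explicit $p_c'$ in the statement.''
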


\begin{proposition}\label{prop:adjointdual}
Let $T\in \mT^+_{m,q,M}$ with $2\le q \le \infty$ and    $M > \frac{n}{2m}$. For  all $\beta <1$,
 $T$ extends to a bounded operator on $T^{p,2,m}(t^\beta dt dy)$ for $2\le p\le \infty$.
\end{proposition}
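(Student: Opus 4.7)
My plan is to reduce the proposition entirely to Proposition \ref{prop:adjoint} via duality, the same mechanism the authors announce just before the statement.

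\textbf{Step 1 (the adjoint lies in the dual class).} I would first check that if $T\in\mT^+_{m,q,M}$ with $q\in[2,\infty]$, then the Hilbert-space adjoint $T^*$ (taken with respect to the pairing $\langle f,g\rangle=\iint_{\R^{n+1}_+}f\bar g\,dtdy$ on $\mH_0$) belongs to $\mT^-_{m,q',M}$, where $q'=q/(q-1)\in[1,2]$. The associated kernel is $K^*(s,t):=K(t,s)^*$, and the $L^2\!-\!L^q$ decay of $K(t,s)$ transposes by H\"older duality to $L^{q'}\!-\!L^2$ decay of $K(t,s)^*$ between the sets $F$ and $E$ (whose roles swap, but $\mathrm{dist}(E,F)$ is symmetric, so the order $M$ and homogeneity $m$ are preserved). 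Hence $T^*\in\mT^-_{m,q',M}$ with $1\le q'\le 2$.

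\textbf{Step 2 (apply Proposition \ref{prop:adjoint} to $T^*$).} Since $\beta<1$ gives $-\beta>-1$, and since $M>\tfrac{n}{2m}$ forces $p_M<1$, Proposition \ref{prop:adjoint} applied to $T^*$ with the weight $t^{-\beta}$ produces a bounded extension of $T^*$ on $T^{p',2,m}(t^{-\beta}dtdy)$ for every $p'\in(p_M,2)$. In particular the whole interval $p'\in[1,2)$ is covered.

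\textbf{Step 3 (dualize).} For $p\in(2,\infty)$ the duality identity $[T^{p,2,m}(t^\beta dtdy)]'=T^{p',2,m}(t^{-\beta}dtdy)$ recalled by the authors immediately before Theorem \ref{thm:lql2absdual} turns the $T^*$ estimate of Step 2 into the desired bound for $T$ on $T^{p,2,m}(t^\beta dtdy)$. The case $p=2$ is a direct consequence of Theorem \ref{AAabs}, since $T^{2,2,m}(t^\beta dtdy)$ coincides (up to the harmless constant $b_n$) with $L^2(\R^{n+1}_+,t^\beta dtdy)$.

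\textbf{Step 4 (the endpoint $p=\infty$).} This is the step I expect to be the main obstacle, because the duality recalled in the paper is only for $p\in(1,\infty)$. Here I would invoke the classical identification $[T^{1,2}]'=T^{\infty,2}$ from \cite{cms} and transport it to the weighted/scaled situation through the isometry $j$ of Remark \ref{rk:iso}, obtaining $[T^{1,2,m}(t^{-\beta}dtdy)]'=T^{\infty,2,m}(t^\beta dtdy)$ with the same bilinear pairing. The extension of $T$ to $T^{\infty,2,m}(t^\beta dtdy)$ is then defined, as usual, by $\langle Tf,g\rangle:=\langle f,T^*g\rangle$ on a dense subset of $T^{1,2,m}(t^{-\beta}dtdy)$, which is legitimate because $T^*$ is already bounded there by the $p'=1$ endpoint of Step 2. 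One needs to verify that on $\mH_\beta\cap T^{\infty,2,m}(t^\beta dtdy)$ this definition agrees with the action of $T\in B(\mH_\beta)$ coming from Theorem \ref{AAabs}; this is routine since $T^*$ maps compactly supported $\mH_{-\beta}$ functions into $\mH_{-\beta}$ and one can test against such $g$'s. Combining Steps~3 and~4 gives boundedness on $T^{p,2,m}(t^\beta dtdy)$ for the full range $2\le p\le\infty$.
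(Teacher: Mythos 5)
Your argument is correct, and it is worth separating its two halves. For $2\le p<\infty$ it coincides with what the paper itself does: the authors state, just before Theorem \ref{thm:lql2absdual}, that the range $2<p<\infty$ is obtained exactly by dualizing Proposition \ref{prop:adjoint} through the unweighted pairing and the identification $[T^{p,2,m}(t^\beta dtdy)]'=T^{p',2,m}(t^{-\beta}dtdy)$; your Steps 1--3 (adjoint kernel $K(s,t)^*$, transposition of $L^2$--$L^q$ decay into $L^{q'}$--$L^2$ decay with the same order $M$ and homogeneity $m$, and $p=2$ from Theorem \ref{AAabs}) are precisely that reduction. Where you genuinely diverge is the endpoint $p=\infty$: the paper's displayed proof of Proposition \ref{prop:adjointdual} is entirely a direct Carleson-measure estimate (split $f=\sum f_j$ over the annular regions $C_j$, apply H\"older and the $L^2$--$L^q$ decay, and sum the resulting $2^{-2jmM}2^{jn}$ bounds using $M>\frac{n}{2m}$), with the extension to general $f\in T^{\infty,2,m}(t^\beta dtdy)$ defined by truncation; you instead take the Banach adjoint of $T^*$ acting on $T^{1,2,m}(t^{-\beta}dtdy)$ (the $p'=1$ endpoint of Proposition \ref{prop:adjoint}, available since $M>\frac{n}{2m}$ forces $p_M<1$ and $-\beta>-1$) and invoke $[T^{1,2}]'=T^{\infty,2}$ from \cite{cms}, transported by the isometry of Remark \ref{rk:iso}. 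Your route is shorter and recycles the atomic estimate already proved; its cost is that the $p=\infty$ extension is only defined weak-$*$, so the consistency checks you flag --- that the adjoint-defined operator agrees with the $\mH_\beta$ extension of Theorem \ref{AAabs} on $\mH_\beta\cap T^{\infty,2,m}(t^\beta dtdy)$, tested against finite linear combinations of $T^{1,2,m}(t^{-\beta}dtdy)$ atoms, which do lie in $\mH_{-\beta}$ --- are a genuine part of the proof rather than an afterthought, and you do not directly recover the concrete Carleson bound for the truncation-defined $Tf$. The paper's direct computation has the additional benefit that the same $I_j$ estimates are reused verbatim in the proof of Theorem \ref{thm:lql2absdual}.
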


It is enough to prove the result for $p=\infty$. The extension is done by taking $f\in T^{\infty,2,m}(t^\beta dt dy)$, truncating $f$ on $(0,k^m)\times B(0,k)$ and letting $k$ go to infinity.

\begin{proof}[Proof of Proposition \ref{prop:adjointdual}] This is very similar to \cite{amp}.
Pick a point $x_{0}\in \R^n$ and $r>0$. Let the sets $B_{j}$ and $C_{j}$ be defined as in the proof of Proposition \ref{prop:adjoint}.  Set
$$
I^2=\int \limits _{B(x_{0},r)}
\int \limits _{0} ^{r^m} |(Tf)(t,y)|^{2} t^{\beta} dtdy.
$$
We want to show that $I^{2} \lesssim r^{{n}} \|f\|_{T^{\infty,2,m}(t^{\beta}dtdy)}^2$.
We set
$$
I_{j}^2=
\int \limits _{B(x_{0},r)} \int \limits _{0} ^{r^m} |(Tf_{j})(t,y)|^{2} t^{\beta} dtdy
$$
where $f_{j}(s,x)=f(s,x) 1_{C_{j}}(s,x)1_{(0,r^m]}(s)$ for $j\ge 0$.  Thus by
Minkowski inequality, $I \le \sum I_{j}$.
{Since the proofs are scale and translation invariant}, we assume $x_{0}=0$ and $r=1$ for simplicity.
For $I_{0}$ we use again Theorem~\ref{AAabs}
which implies
$$
I_{0}^2\lesssim  \int \limits _{B(0,2)} \int \limits _{0} ^{2^m} |f(s,x)|^{2} s^{\beta} dsdx
\lesssim  \|f\|_{T^{\infty,2,m}(t^{\beta}dtdy)}^2.
$$
Next, for $j \neq 0$, we proceed as in the proof of Proposition \ref{prop:adjoint}  by representing $Tf_{j}(t,y)$ through a kernel (which is justified by the calculation below and Lemma \ref{lem:rep} for the singular part)  but using this time $L^2-L^q$ decay {(after using H\"older inequality for the integral with respect to $y$ on $B(0,2)$)} to obtain
\begin{eqnarray*}
I_{j}^{2}
&\lesssim& \sum \limits _{k=1} ^{\infty} \int \limits _{0} ^{1} \int \limits _{2^{-k-1}t} ^{2^{-k}t}
  \frac{2^{-k}t}{|t-s|^{\frac{2n}{m}(\frac{1}{2}-\frac{1}{q})+2}}\Bigl(1+\frac{2^{jm}}{t-s}\Bigr)^{-2M} \|f_{j}(s,.)\|_{{2}} ^{2} \,ds\,t^{\beta}dt
\\[4pt]
&&+ \int \limits _{0} ^{1} \int \limits _{\frac{t}{2}} ^{t} \frac{t^{\beta+2\varepsilon}}{|t-s|^{\frac{2n}{m}(\frac{1}{2}-\frac{1}{q})+1+2\varepsilon}} \Bigl(1+\frac{2^{jm}}{t-s}\Bigr)^{-2M}
\|f_{j}(s,.)\|_{{2}} ^{2}\, \,ds\,dt.
\end{eqnarray*}
Exchanging the order of integration, and using the fact that $t \sim t-s$ in the first part and
that $t \sim s$ in the second, and $M>\frac{n}{m}(\frac{1}{2}-\frac{1}{q})+\varepsilon$ for small enough $\varepsilon$,  and  $\beta<1$,  we have the following.
\begin{eqnarray*}
I_{j}^{2} &\lesssim&
\sum \limits _{k=1} ^{\infty} 2^{-k}2^{-2jmM}
\int \limits _{0} ^{2^{-k}} \int \limits _{2^{k}s} ^{2^{k+1}s} t^{\beta-1+2M-\frac{2n}{m}(\frac{1}{2}-\frac{1}{q})}
\|f_{j}(s,.)\|_{{2}} ^{2} \, dtds
\\[4pt]
&& + \int \limits _{0} ^{1} \int \limits _{s} ^{2s} \frac{t^{\beta+2\varepsilon}}{|t-s|^{\frac{2n}{m}(\frac{1}{2}-\frac{1}{q})+1+2\varepsilon}}
\Bigl(1+\frac{2^{jm}}{t-s}\Bigr)^{-2M}\|f_{j}(s,.)\|_{{2}} ^{2}\, s^{\beta}dtds
\\[4pt]
&\lesssim&
\sum \limits _{k=1} ^{\infty} 2^{-k}2^{-2jmM}
\int \limits _{0} ^{2^{-k}} (2^{k}s)^{\beta} \|f_{j}(s,.)\|_{{2}} ^{2}\, ds
+ 2^{-2jmM} \int \limits _{0} ^{1}
\|f_{j}(s,.)\|_{L^{2}} ^{2} s^{\beta} ds
\\[4pt]
&\lesssim& 2^{-2jmM} \int \limits _{0} ^{2^{jm}}  \|f_{j}(s,.)\|_{{2}} ^{2}\, s^{\beta}ds.
\end{eqnarray*}
We thus have
$$
I_{j}^{2} \lesssim 2^{-2jmM}2^{jn}\|f\|^{2}_{T^{\infty,2,m}(t^{\beta} dtdy)},
$$
and the condition $M>\frac{n}{2m}$ allows us to sum these estimates.
\end{proof}

\begin{proof}[Proof of Theorem  \ref{thm:lql2absdual}] The proof is almost entirely similar to the above one.  Set $I_{j}$ as in the proof of Proposition \ref{prop:adjoint}. $I_{0}$ is estimated as before. When $j\ge 1$, the inner term in $ I_{j}$ can be expressed using the kernel representation  from $t$ to $+\infty$, which is split into $I_{j,k}$ on the dyadic  intervals $(2^kt, 2^{k+1}t)$ for $k\in
\N$, using Minkowski inequality. The $k=0$ term is estimated as was the term corresponding to $(t/2,t)$. For $k\ge 1$, the $k$th term is controlled by
$$
 \int \limits _{0} ^{1} \int \limits _{2^{k}t} ^{2^{k+1}t}
  \frac{2^{k}t}{|t-s|^{\frac{2n}{m}(\frac{1}{2}-\frac{1}{q})+2}}\Bigl(1+\frac{2^{jm}}{t-s}\Bigr)^{-2M} \|f_{j}(s,.)\|_{{2}} ^{2}\, ds\,t^{\beta}dt.
  $$
  Exchanging order, we obtain the bound
  $$
 2^{-2jmM} 2^{k(1-\beta+ 2M- \frac{2n}{m}(\frac{1}{2}-\frac{1}{q}))}  \int \limits _{0} ^{2^k}
\|f_{j}(s,.)\|_{{2}} ^{2} \,s^{\beta} ds.
 $$
 Note that the support of $f_{j}$ forces $s\le 2^{(j+1)m}$ in the integral, which is bounded by $C 2^{jn}$.
  The series for $I_{j,k}$ is summable in $k$ under the condition in the statement and summable in
  $j$ if  $M>\frac{n}{2m}$.
\end{proof}

\section{Maximal regularity operators}\label{sec:max}
Let us come back to our original motivation {which is to bound  maximal regularity operators on tent spaces}.

\begin{definition} Let $1\le q\le r\le \infty$.
A family of bounded linear operators $(T_{t})_{t > 0} \subset B(L^{2}(\R^{n}))$ is said to
satisfy $L^q-L^r$ off-diagonal estimates of order $M$, with homogeneity $m$, if, for all Borel sets
$E,F \subset \R^{n}$, all $t>0$, and all $f \in L^{2}(\R^{n})\cap L^q(\R^n)$:
$$
\|1_{F}T_{t}1_{E}f\|_{r} \lesssim t^{-\frac{n}{m}(\frac{1}{q}- \frac{1}{r})}\,  \Big(1+\frac{dist(E,F)^{m}}{t}\Big)^{-M}\|1_{E}f\|_{q}.
$$
\end{definition}

{With this definition we have the following simple fact.}

\begin{proposition}
Let $1\le q\le 2$ (resp. $2\le q\le \infty$) and assume that $(tLe^{-tL})_{t \geq 0}$ satisfies $L^q-L^2$ (resp. $L^2-L^q$) off-diagonal estimates (of order
$M$),
 with homogeneity $m$. Then
 $\mathcal{M}_{L} \in \mT^+_{m,q,M}$ and $\mathcal{M}_{L^*}^-\in \mT^-_{m,q',M}$.  \end{proposition}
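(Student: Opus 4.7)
The plan is to recognize that the earlier discussion has already placed $\mathcal{M}_L$ in $\mT^+$, with operator-valued kernel $K(t,s) = Le^{-(t-s)L}$ on $\{s<t\}$, and correspondingly $\mathcal{M}_{L^*}^- = (\mathcal{M}_L)^*$ in $\mT^-$ with kernel $K(s,t)^* = L^*e^{-(s-t)L^*}$ on $\{t<s\}$. Consequently only the additional $L^q-L^2$ (respectively $L^2-L^{q'}$) decay of these kernels needs to be verified; the $\mH_0$-boundedness is de Simon's theorem, and the size bound $\|K(t,s)\|_{\mathcal{B}(L^2)} \lesssim |t-s|^{-1}$ is the standard analyticity estimate applied to $uLe^{-uL}$ at $u=t-s$.

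For the case $q\le 2$ and the forward operator, I would fix Borel sets $E,F\subset \R^n$ and $s<t$, and simply rewrite
\[
1_F K(t,s) 1_E = (t-s)^{-1}\, 1_F\bigl((t-s)Le^{-(t-s)L}\bigr)1_E,
\]
then apply the assumed $L^q-L^2$ off-diagonal estimate on the family $(uLe^{-uL})_{u>0}$ at time $u=t-s$. The resulting decay factor $u^{-\frac{n}{m}(\frac{1}{q}-\frac{1}{2})}(1+\mathrm{dist}(E,F)^m/u)^{-M}$, combined with the prefactor $(t-s)^{-1}$, matches exactly the decay required by Definition \ref{def:qroff}, so that $\mathcal{M}_L\in \mT^+_{m,q,M}$.

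For the backward operator, the key observation is that the $L^2$-adjoint of an operator bounded $L^q\to L^2$ with norm $C$ is bounded $L^2\to L^{q'}$ with the same norm. Taking $L^2$-adjoints of the off-diagonal estimates for $(uLe^{-uL})$ thus yields $L^2-L^{q'}$ off-diagonal estimates, of the same order $M$ and homogeneity $m$, for $(uL^*e^{-uL^*})$. Since $q'\ge 2$, applying the previous step with $L$ replaced by $L^*$ shows that $K(s,t)^*$ belongs to $\CZK^-$ with $L^2-L^{q'}$ decay of order $M$, so that $\mathcal{M}_{L^*}^-\in \mT^-_{m,q',M}$. The case $2\le q\le \infty$ is treated symmetrically, with the roles of $L^q-L^2$ and $L^2-L^q$ off-diagonal estimates interchanged throughout.

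The argument is a direct bookkeeping check, and no serious obstacle arises; the only point requiring mild care is the duality step translating off-diagonal estimates on the semigroup into off-diagonal estimates on the adjoint semigroup with the correct exponents.
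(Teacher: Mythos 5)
Your proposal is correct and follows essentially the same route as the paper, which simply observes that the kernel $Le^{-|t-s|L}$ inherits the $L^q-L^2$ (resp. $L^2-L^q$) decay of order $M$ from the off-diagonal estimates on $(tLe^{-tL})_{t\geq 0}$ evaluated at $t-s$, and then invokes Definition \ref{def:SIOq}. Your explicit duality step for passing to $(tL^*e^{-tL^*})_{t\geq 0}$ with exponents $(2,q')$ is left implicit in the paper but is exactly the intended argument.
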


 {Indeed,  the operator-valued kernel $Le^{-|t-s|L}$ has $L^q-L^2$ (resp. $L^2-L^q$) decay (of order
$M$),
 with homogeneity $m$ so that it suffices to apply Definition \ref{def:SIOq}.}

To illustrate  our results so far, let us prove Proposition \ref{cor:Laplace}.

{
\begin{proof}[Proof of Proposition \ref{cor:Laplace}] Let $L=-\Delta+V$ or $-{\rm div} \,A\nabla$ with real coefficients.
{
Then, the kernel of the semigroup $(e^{-tL})_{t\geq 0}$ satisfies  pointwise Gaussian estimate {(see e.g. \cite[Theorem 6.10]{o})}, hence $L^1-L^2$ and $L^2-L^\infty$ off-diagonal estimates with homogeneity   $m=2$ of order $\infty$. {Therefore we have that} $\mathcal{M}_{L} \in \mT^+_{2,1,\infty}\cap \mT^+_{2,\infty,\infty}$. We now   apply  the second case of Theorem \ref{thm:lql2abs} and Proposition \ref{prop:adjointdual} with  $\beta=0$ to conclude that  $T^{p,2,m}(dt dy)$ boundedness of  $\mathcal{M}_{L}$ holds for $\infty\ge p>\tilde p_{c} =\frac{n}{n+1}$.

 Using the subordination formula,  the Poisson semigroup associated with $\sqrt L$ satisfies $L^1-L^2$ and $L^2-L^\infty$ off-diagonal estimates with homogeneity   $m=1$ and order $\frac n 2 +1$. Thus  $\mathcal{M}_{\sqrt{L}} \in \mT^+_{1,1,\frac n 2 +1 }\cap \mT^+_{1,\infty,\frac n 2 +1}$.  From $M=\frac n 2 +1$ and $m=1$, we have $p_{M}= \frac{n}{n+1}$.
As $\beta=-1$, $m=1$ and $q=1$,   $\frac n 2 < -\frac{\beta-1}{2} + \frac{n}{m}\left(\frac{1}{q}-\frac{1}{2}\right) =1+ \frac n 2$ and we are in the second case of Theorem \ref{thm:lql2abs}. Applying this result  and Proposition \ref{prop:adjointdual}, we conclude that $T^{p,2,m}(t ^{-1}dt dy)$ boundedness of $\mathcal{M}_{\sqrt{L}}$ holds for $\infty\ge p>\sup(p_{M},\tilde p_{c}) =\frac{n}{n+1}$. }
\end{proof}

As explained in the introduction,  applications of our results require  $M$ to be sufficiently large, namely $M>\frac{n}{2m}$, whatever the value of $q$}. Of course, with exponential decay, this is not a  problem.
{Semigroups generated by elliptic operators of even  order $m\ge 2$ have, in general, such an exponential off-diagonal decay}.
However, in the case of Poisson type semigroups, small  polynomial decay is to be expected.
This application suggests  {that
the lower bound on $M$ should be kept as low as possible}. Looking at the proof of Lemma \ref{lem:m1}, {there seems to be unavoidable restrictions}
if we are only given $M$  without further information. However, the decay of the semigroup is usually computed from the decay of the resolvent and integration on a contour. This is the point of view we shall take.

We consider the  following conditions on resolvent estimates for fixed $1\le q\le r\le \infty$.

1)  There exists a bisectorial operator $\widetilde{L}$ of angle $\omega \in [0,\frac{\pi}{2})$ having a bounded $H^\infty$ functional calculus on $L^2(\R^n)$ such that $L = |\widetilde{L}| (=\sqrt{\widetilde L^2}=\sqrt {L^2}),$ and  for any $K\in \N$ and $\omega<\nu<\pi/2$,
\begin{equation}\label{equ:OD resolvent |L|}\tag{H1}
\|1_F(1-z\widetilde{L})^{-1}1_E f\|_r \leq c(K,\nu) |z|^{-\frac{n}{m}(\frac{1}{q}- \frac{1}{r})} \Big( 1 + \frac{dist(E,F)^m}{|z|} \Big)^{-K} \|1_E f\|_q .
\end{equation}
for all $f\in L^2(\R^n)\cap L^q(\R^n)$,  $E,F$ Borel subsets of $\R^n$, $z=e^{\pm i \theta}t$, $t>0$ and  $|\theta- \frac{\pi}{2}| <  \frac{\pi}{2} - \nu$.

2) The operator $L^2$ is  sectorial in $L^2(\R^n)$ of angle $2 \omega < \pi$  and for any $K\in \N$ and $\omega<\nu<\pi/2$,
\begin{equation}\label{equ:OD resolvent L2}\tag{H2}
\|1_F(1-z L^2)^{-1}1_E f\|_r \leq c(K,\nu) |z|^{-\frac{n}{2m}(\frac{1}{q}- \frac{1}{r})} \Big( 1 + \frac{dist(E,F)^{2m}}{|z|} \Big)^{-K} \|1_E f\|_q
\end{equation}
for all $f\in L^2(\R^n)\cap L^q(\R^n)$,  $E,F$ Borel subsets of $\R^n$, $z=e^{\pm i \theta}t$, $t>0$ and  $2\nu < \theta\le \pi$.

Operators of Dirac type satisfying \eqref{equ:OD resolvent |L|} with $m=1$ appear in \cite[Proposition 5.2]{akm}.
{See also \cite{amr} and \cite{hnp}.}

(H1) and (H2) are closely related and, in fact,    (H1) implies (H2). Indeed, it follows from the
resolvent formula
$$
(1-z^2 \widetilde L^2)^{-1} = \frac{1}{2} (1-z\widetilde{L})^{-1} + \frac{1}{2} (1+z\widetilde{L})^{-1}
$$
for $z$ as in (H1).  Remark that, in (H2), $2w$ may be greater than or equal to $\pi/2$, in which case $-L^2$ may not generate a semigroup.

\begin{proposition}\label{prop:qr} Let $L$ be a sectorial operator of angle $\omega<\pi/2$ with an $H^\infty$ functional calculus on $L^2(\R^n)$.
Assume that \eqref{equ:OD resolvent |L|} or \eqref{equ:OD resolvent L2} is satisfied and fix $\omega<\nu<\pi/2$.
Then for any $0< \epsilon < R < \infty$  and  any $\alpha \in \C$ with $ \Re e\, \alpha  \in [\epsilon , R],$ $\|1_F (tL)^\alpha e^{-tL} 1_Ef \|_r$ has bound
\begin{equation*}\label{equ:OD M alpha}
  c(\epsilon,R,q,r,\nu) e^{\nu| \Im m\, \alpha \Bk |} \cdot t^{-\frac{n}{m}(\frac{1}{q}- \frac{1}{r})} \Big( 1 + \frac{dist(E,F)^m}{t} \Big)^{- \Re e\, \alpha  - \frac{n}{m}(\frac{1}{q}- \frac{1}{r})} \|1_E f\|_q.
\end{equation*}
\end{proposition}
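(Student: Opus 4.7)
The natural plan is to write $(tL)^\alpha e^{-tL}$ as a Dunford--Cauchy contour integral in an appropriate resolvent and then insert the off-diagonal resolvent bound (H1) or (H2) ray by ray. Under (H1), I would use $L=|\widetilde L|$ and construct a function $g$ holomorphic on a bisector $\Sigma^{bi}_\nu$ by setting $g(\lambda)=(t\lambda)^\alpha e^{-t\lambda}$ on $\Sigma_\nu^+$ and $g(\lambda)=(-t\lambda)^\alpha e^{t\lambda}$ on $-\Sigma_\nu^+$, and verify (using $\widetilde L = \mathrm{sgn}(\widetilde L)|\widetilde L|$) that $g(\widetilde L)=(tL)^\alpha e^{-tL}$. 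The $H^\infty$ functional calculus then gives
\[
(tL)^\alpha e^{-tL}=\frac{1}{2\pi i}\int_{\partial\Sigma^{bi}_\theta} g(\lambda)\,(\lambda-\widetilde L)^{-1}\,d\lambda
\]
for any $\omega<\theta<\nu$. Writing $(\lambda-\widetilde L)^{-1}=\lambda^{-1}(1-\lambda^{-1}\widetilde L)^{-1}$ and applying (H1) with $z=1/\lambda$, I get the bound
\[
\|1_F(\lambda-\widetilde L)^{-1}1_E f\|_r \lesssim |\lambda|^{\frac{n}{m}(\frac{1}{q}-\frac{1}{r})-1}\bigl(1+|\lambda|\,\mathrm{dist}(E,F)^m\bigr)^{-K}\|1_E f\|_q.
\]

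Parametrizing each ray by $\lambda = s e^{\pm i\theta}$, I would control $|g(\lambda)|\le (ts)^{\Re\alpha}e^{\nu|\Im\alpha|}e^{-ts\cos\theta}$ (the sign choice between $g_\pm$ is exactly what ensures the exponential factor decays on every ray), insert this into the contour integral, change variables $u=ts$, and reduce matters to estimating
\[
\Psi(\tau)\;:=\;\int_0^\infty u^{\Re\alpha+\frac{n}{m}(\frac{1}{q}-\frac{1}{r})-1}\,e^{-u\cos\theta}\,(1+u\tau)^{-K}\,du,\qquad \tau=\mathrm{dist}(E,F)^m/t,
\]
with the prefactor $t^{-\frac{n}{m}(\frac{1}{q}-\frac{1}{r})}e^{\nu|\Im\alpha|}$ already yielding the correct homogeneity and the sought exponential dependence on $\Im\alpha$.

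The remaining task is to show $\Psi(\tau)\lesssim (1+\tau)^{-\Re\alpha-\frac{n}{m}(\frac{1}{q}-\frac{1}{r})}$. I would split into two regimes: if $\tau\le 1$ the decay factor is harmless and $\Psi(\tau)\lesssim 1$ since $\Re\alpha\ge\epsilon>0$ makes the integral convergent at $0$; if $\tau\ge 1$ I would split the $u$-integral at $u=1/\tau$, use $(1+u\tau)^{-K}\le 1$ for $u\le 1/\tau$ and $(1+u\tau)^{-K}\le (u\tau)^{-K}$ for $u\ge 1/\tau$, with $K$ chosen so that $K>R+n/m\ge \Re\alpha+\frac{n}{m}(\frac{1}{q}-\frac{1}{r})$; this produces the expected factor $\tau^{-\Re\alpha-\frac{n}{m}(\frac{1}{q}-\frac{1}{r})}$ from both pieces. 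Under (H2) alone, the same strategy applies after replacing the representation by $(tL)^\alpha e^{-tL}=h(L^2)$ where $h(\zeta)=(t\sqrt{\zeta})^\alpha e^{-t\sqrt{\zeta}}$ is holomorphic on a sector of opening $>2\omega$; one represents $h(L^2)$ via the resolvent of $L^2$, applies (H2), and carries out the analogous change of variables ($u=\sqrt{ts}$) which reconciles the $2m$-homogeneity of (H2) with the final $m$-homogeneity of the off-diagonal estimate.

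The main technical nuisance will be bookkeeping: keeping the $e^{\nu|\Im\alpha|}$ factor clean on each of the four rays of the bisector, verifying that the constant $K$ in (H1)/(H2) can be fixed a priori depending only on $\epsilon,R,q,r$, and, under (H2), correctly tracking exponents through the $\sqrt{\cdot}$ substitution so that the bisectorial $d^m$ decay and the sectorial-in-$L^2$ $d^{2m}$ decay match after the change of variables.
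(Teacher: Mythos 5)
Your proposal is correct and follows essentially the same route as the paper: a Dunford--Cauchy representation of $(tL)^\alpha e^{-tL}$ on a contour inside the region of holomorphy, insertion of the off-diagonal resolvent bounds ray by ray, and a change of variables reducing everything to the one-dimensional integral $\Psi(\tau)$, which you estimate by the same splitting at $u=1/\tau$ (the paper's condition $2K>R+\gamma$ matches your choice of $K$). The only cosmetic differences are that the paper dispenses with a separate (H1) argument by noting that (H1) implies (H2) via $(1-z^2\widetilde{L}^2)^{-1}=\tfrac12(1-z\widetilde{L})^{-1}+\tfrac12(1+z\widetilde{L})^{-1}$ and then works only with the sectorial operator $L^2$, whereas you treat (H1) directly with the bisectorial calculus; also, your substitution under (H2) should read $u=t\sqrt{s}$ rather than $u=\sqrt{ts}$.
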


A result in this spirit is in \cite{hm} for $q=r=2$.

\begin{proof} It is enough to assume (H2).
In this case, fix $\omega<\nu'<\theta<\nu$, and  let \[\phi_t(\lambda) = (t\lambda^{\frac12})^\alpha e^{-t\lambda^{\frac12}}\]
which is holomorphic and bounded for $|\arg \lambda\, | < \pi - 2\nu'.$
The Cauchy integral formula for sectorial operators implies that
\[ (tL)^\alpha e^{-tL} = \frac{1}{2\pi i }\int\limits_\Gamma \phi_t(\lambda)(1- \lambda^{-1}L^2)^{-1} \frac{d\lambda}{\lambda}\]
holds with $\Gamma$ the oriented contour $\{|s|e^{i \text{sign}(s)2\theta}:\:s \in \R\}.$
We write $c_\theta = \Re e\, (e^{i\theta})=\cos \theta > 0.$
Fix $f$ with $\|1_E f\|_q=1$. In the following, we write $a = \Re e\, \alpha  ,\, d = dist(E,F)$, $\gamma=\frac{n}{m}(\frac{1}{q}- \frac{1}{r})$.  Then \eqref{equ:OD resolvent L2}  gives us
\begin{align*}
\| 1_F (tL)^\alpha e^{-tL} 1_E f\|_r & \lesssim c(K) \int\limits_0^\infty t^a s^{{\frac{a}{2}}} e^{{\theta}  \Im m\, \alpha} e^{-c_\theta ts^{\frac12}}  s^{\frac{\gamma}{2}} ( 1 + d^{2m}s)^{-K} \frac{ds}{s} \\
& \cong c(K,\nu)e^{\nu| \Im m\, \alpha  |}\int\limits_0^\infty t^a s^a  s^{\gamma} e^{-c_\theta t s}(1+d^{2m}s^2)^{-K} \frac{ds}{s} \\
& \lesssim  c(K,\nu)e^{\nu|\Im m\, \alpha  |} t^{-\gamma}\int\limits_0^\infty  s^a  s^{\gamma} e^{-c_\theta s}\Big(1+\frac{d^{2m}s^2}{t^2}\Big)^{-K} \frac{ds}{s} \\
& \lesssim  c(\epsilon, K, q,r,\nu)e^{\nu| \Im m\, \alpha  |} t^{-\gamma} \Big(1 + \frac{d^m}{t}\Big)^{-a-\gamma}.
\end{align*}
provided   $2K > R+\gamma$. {We used the fact that $1 \le  2(1 + x)^{-1} $ when $x \leq 1$, and
$ x^{-1} \le 2 (1 +x)^{-1}$ when $x \geq 1$.} The parameter $\epsilon>0$ is only needed when $q=r$.
\end{proof}

It is clear that similar results hold for fractional powers of sectorial operators.  We shall not get into this here. Note also that an exponential decay in the resolvent estimates would not yield a better conclusion in general.

\begin{definition} Let $L$ be a sectorial operator of type $\omega<\pi/2$ and having a bounded holomorphic functional calculus on a Hilbert space $H$.
For $ \Re e\, \alpha   > 0$, we define the operator $\mM_\alpha$ acting on $L^2(\R_{+}, dt;  D(L^\alpha))$ by
\[ \mM_\alpha f (t) = \int\limits_0^t (t-s)^{\alpha - 1} L^\alpha e^{-(t-s)L}f(s)\,  ds.\]
\end{definition}

Clearly $\mM_{1}=\mM_{L}$.

\begin{proposition}\label{prop:malpha}
{Let $\alpha \in \{z \in \C \;;\; a \leq \Re e\, z \leq b\}$ for some $a,b \in \R_{+}$.}
Then $\mM_{\alpha}$ extends  boundedly to $L^2(\R_{+}, dt; H)$, with a bound not exceeding $ce^{\nu | \Im m\, \alpha  |}$ for any $\omega<\nu<\pi/2$, {and some
constant $c$  dependent on $a,b$.}
\end{proposition}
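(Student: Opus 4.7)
The plan is to realise $\mM_\alpha$ as an operator-valued Fourier multiplier in the time variable and then invoke Plancherel on $L^2(\R;H)$. Extend $f\in L^2(\R_{+},dt;D(L^\alpha))$ by zero to $\R$, so that $\mM_\alpha f = K_\alpha * f$ with operator-valued kernel $K_\alpha(t)=t^{\alpha-1}L^\alpha e^{-tL}\mathbf{1}_{(0,\infty)}(t)$. For $f$ in a dense nice subspace (e.g.\ $C^\infty_c(\R_{+};D(L^\alpha))$) one has a pointwise Fourier identity $\widehat{\mM_\alpha f}(\xi)=\widehat{K_\alpha}(\xi)\widehat f(\xi)$, so Plancherel reduces the extension property to a bound on the operator-valued symbol
\[
 \sup_{\xi\in\R}\bigl\|\widehat{K_\alpha}(\xi)\bigr\|_{B(H)} \;\le\; c\,e^{\nu|\Im m\,\alpha|}.
\]

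The symbol is computed via the $H^\infty$ calculus. On the scalar level, for $\Re e\,\lambda\ge 0$ with $(\lambda,\xi)\ne(0,0)$, one has $\int_0^\infty t^{\alpha-1}e^{-(\lambda+i\xi)t}\,dt=\Gamma(\alpha)(\lambda+i\xi)^{-\alpha}$ (with the principal branch). Since $L$ is sectorial of angle $\omega<\pi/2$, the spectrum of $L+i\xi$ lies in $i\xi+\Sigma_\omega$, which avoids $(-\infty,0]$, so the function $\phi_\xi(z):=z^\alpha(z+i\xi)^{-\alpha}$ is well-defined and bounded on every $\Sigma_\nu$ with $\omega<\nu<\pi/2$; and lifting the scalar identity via the $H^\infty$ calculus gives
\[
 \widehat{K_\alpha}(\xi)\;=\;\Gamma(\alpha)\,L^\alpha(L+i\xi)^{-\alpha}\;=\;\Gamma(\alpha)\,\phi_\xi(L).
\]
By the boundedness of the $H^\infty$ calculus, $\|\phi_\xi(L)\|\le C_\nu\|\phi_\xi\|_{H^\infty(\Sigma_\nu)}$, and Stirling yields $|\Gamma(\alpha)|\le C_{a,b}(1+|\Im m\,\alpha|)^{b-1/2}e^{-(\pi/2)|\Im m\,\alpha|}$ uniformly for $\Re e\,\alpha\in[a,b]$.

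The main obstacle is the scalar estimate of $\|\phi_\xi\|_{H^\infty(\Sigma_\nu)}$. Writing $z=re^{i\theta}$ with $|\theta|<\nu$ and $w:=z/(i\xi+z)$, a direct calculation gives
\[
 |w|^2 \;=\; \frac{r^2}{r^2+\xi^2+2\xi r\sin\theta} \;=\; \frac{1}{(u+\sin\theta)^2+\cos^2\theta}\quad\text{with }u=\xi/r,
\]
so $|w|\le 1/\cos\theta\le 1/\cos\nu$. Moreover, $i\xi+z$ has positive real part $r\cos\theta$, so $\arg(i\xi+z)\in(-\pi/2,\pi/2)$ and the monotonicity of $\arctan$ in the imaginary part yields $|\arg w|=|\theta-\arg(i\xi+z)|\le \pi/2+\nu$. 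Therefore
\[
 |\phi_\xi(z)|\;=\;|w|^{\Re e\,\alpha}\,e^{-\arg(w)\Im m\,\alpha}\;\le\;(\cos\nu)^{-b}\,e^{(\pi/2+\nu)|\Im m\,\alpha|}.
\]
Combining with the $\Gamma$-bound,
\[
 \|\widehat{K_\alpha}(\xi)\|_{B(H)}\;\le\;C_{a,b,\nu}\,(1+|\Im m\,\alpha|)^{b-1/2}\,e^{\nu|\Im m\,\alpha|}.
\]
The polynomial factor is absorbed into the exponential by an arbitrarily small increase of $\nu$ (we have freedom to choose $\nu$ anywhere in $(\omega,\pi/2)$), which gives the claimed bound for any $\omega<\nu<\pi/2$ and concludes by density. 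The delicate point is the matching of angles: the $e^{(\pi/2+\nu)|\Im m\,\alpha|}$ factor coming from the half-plane translation $z\mapsto z+i\xi$ meeting the sector $\Sigma_\nu$ would be insufficient on its own, and only its cancellation against the $e^{-(\pi/2)|\Im m\,\alpha|}$ decay of $\Gamma(\alpha)$ produces the $e^{\nu|\Im m\,\alpha|}$ growth demanded by the statement.
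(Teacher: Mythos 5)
Your argument is correct, but it takes a genuinely different route from the paper's. The paper first invokes the operational (operator-valued $H^\infty$) calculus of \cite{aa} to reduce to the scalar generator $L=zI$ on $L^2(\R_{+},dt;\C)$ with $|\arg z\,|<\nu$, and then runs Schur's test on the scalar kernel $(t-s)^{\alpha-1}z^\alpha e^{-(t-s)z}1_{s<t}$: both Schur integrals are computed exactly, each equal to $\Gamma(\Re e\,\alpha)\,e^{\nu|\Im m\,\alpha|}(\cos\nu)^{-\Re e\,\alpha}$, so the admissible growth in $\Im m\,\alpha$ falls out in one line with no need for Stirling's formula or for enlarging the angle. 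You instead diagonalise the time variable by Plancherel (de Simon's original strategy), reduce to the uniform symbol bound $\sup_{\xi}\|\Gamma(\alpha)\phi_\xi(L)\|$, and control $\phi_\xi(L)$ by the scalar $H^\infty$ calculus on the sector $|\arg z\,|<\nu$ together with the elementary estimates $|w|\le 1/\cos\nu$ and $|\arg w|\le \tfrac{\pi}{2}+\nu$ for $w=z/(z+i\xi)$; these computations are correct. The price of your route is the cancellation you rightly flag: the factor $e^{(\pi/2+\nu)|\Im m\,\alpha|}$ from the symbol must be offset by the $e^{-\pi|\Im m\,\alpha|/2}$ decay of $\Gamma(\alpha)$, and the residual Stirling polynomial absorbed by shrinking $\nu$ slightly, whereas in the paper's route the angle bookkeeping is immediate. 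Two points you should make explicit to be fully rigorous: (i) since $\|K_\alpha(t)\|_{B(H)}\lesssim t^{-1}$, the kernel is not Bochner integrable at either end, so the identity $\widehat{\mM_\alpha f}=\widehat{K_\alpha}\widehat f$ and the formula $\widehat{K_\alpha}(\xi)=\Gamma(\alpha)L^\alpha(L+i\xi)^{-\alpha}$ require a regularisation (e.g.\ replace $L$ by $L+\e$, for which the kernel decays exponentially, obtain the bound uniformly in $\e$, and let $\e\to 0$) --- this is routine and is the analogue of the limiting argument hidden in the paper's one-line appeal to \cite{aa}; (ii) on a Hilbert space the boundedness of the $H^\infty$ calculus at some angle yields it at every angle $\nu'\in(\omega,\pi/2)$, which is what licenses the $\e$-enlargement of $\nu$ at the end. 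With these remarks supplied, your proof is a complete and valid alternative.
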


\begin{proof} {Using operational calculus as in \cite{aa}, which is possible  since $L$ has bounded holomorphic functional calculus on $H$, } it is enough to prove the same thing for $L=zI$ on $L^2(\R_{+}, dt; \C)$ for $|\arg z \, | <\nu$. In this case, we use Schur's lemma for the complex-valued kernel  $(t-s)^{\alpha - 1} z^\alpha e^{-(t-s)z} 1_{s<t}$.  For $w=\Re e\, z$, $|z|\le \frac{w}{\cos\nu}$, hence
\begin{align*}
\int\limits_{0}^t |(t-s)^{\alpha - 1} z^\alpha e^{-(t-s)z}|\, ds  &\le \frac{e^{\nu | \Im m\, \alpha  |}}{(\cos\nu)^{ \Re e\, \alpha  }}  \int\limits_{0}^t |(t-s)^{ \Re e\, \alpha   - 1} w^{ \Re e\, \alpha  } e^{-(t-s)w}|\, ds \\
&
\le \frac{\Gamma( \Re e\, \alpha  ) e^{\nu | \Im m\, \alpha |}}{(\cos\nu)^{ \Re e\, \alpha }}
\end{align*}
and
\begin{align*}
\int\limits_{s}^\infty |(t-s)^{\alpha - 1} z^\alpha e^{-(t-s)z}|\, dt & \le \frac{e^{\nu | \Im m\, \alpha  |}}{(\cos\nu)^{ \Re e\, \alpha  }}  \int\limits_{s}^\infty |(t-s)^{\Re e\,\alpha - 1} w^{ \Re e\, \alpha  } e^{-(t-s)w}|\, dt \\ &
\le \frac{\Gamma( \Re e\, \alpha ) e^{\nu | \Im m\, \alpha  |}}{(\cos\nu)^{ \Re e\, \alpha }},
\end{align*}
with $\Gamma$ being the Euler  Gamma function.
\end{proof}

\begin{corollary}\label{cor:ML} Let $H=L^2(\R^n)$. If  $1\le q \le \infty$ and (H2) holds for $(q,2) $ if $q\le 2$ or $(2,q)$ if $q\ge 2$ then $\mM_\alpha \in \mT^+_{m,q, M_{q}}$ with $M_{q}= \Re e\, \alpha   + \frac{n}{m}|\frac{1}{q}- \frac{1}{2}|$.

\end{corollary}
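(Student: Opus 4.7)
The plan is to verify the three defining conditions for membership in $\mT^+_{m,q,M_q}$: boundedness on $\mH_0$, the integral representation with a kernel satisfying the $|t-s|^{-1}$ size bound, and the required $L^q$-$L^2$ (or $L^2$-$L^q$) off-diagonal decay of order $M_q$. The key reduction is that the kernel of $\mM_\alpha$ factors cleanly as
$$
K(t,s)=(t-s)^{\alpha-1}L^\alpha e^{-(t-s)L}=(t-s)^{-1}\bigl((t-s)L\bigr)^\alpha e^{-(t-s)L}
\qquad(s<t),
$$
so that everything will follow by applying Proposition \ref{prop:qr} to the operator $((t-s)L)^\alpha e^{-(t-s)L}$ with the time parameter $\tau := t-s$.

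First, boundedness on $\mH_0=L^2(\R_+,dt;L^2(\R^n))$ is exactly the content of Proposition \ref{prop:malpha}, since $L$ has a bounded $H^\infty$-calculus on $L^2(\R^n)$. For the size bound, I apply Proposition \ref{prop:qr} with $E=F=\R^n$ and $q=r=2$ (so that $\gamma=0$ and the off-diagonal factor is absent), which gives $\|((t-s)L)^\alpha e^{-(t-s)L}\|_{2\to 2}\lesssim 1$, hence $\|K(t,s)\|_{2\to 2}\lesssim|t-s|^{-1}$, as required by the definition of $\mT^+$. The integral representation \eqref{eq:rep+} holds on the dense subspace $L^2(\R_+,dt;D(L^\alpha))$ by the very definition of $\mM_\alpha$; for $f\in\mH_0$ with bounded support and $t$ outside this support, $|t-s|$ stays bounded below so $K(t,s)f(s)$ is Bochner-integrable and the identity passes to the limit under the extension.

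For the decay estimate, I apply Proposition \ref{prop:qr} again, this time on the off-diagonal side, choosing the pair $(q,2)$ when $q\le 2$ and $(2,q)$ when $q\ge 2$. With $\gamma=\frac{n}{m}\bigl|\frac{1}{q}-\frac{1}{2}\bigr|$ and $\tau=t-s$, the proposition yields, for Borel sets $E,F$ with $d=\mathrm{dist}(E,F)$,
$$
\bigl\|1_F\bigl(\tau L\bigr)^\alpha e^{-\tau L}1_E f\bigr\|_{r}
\lesssim \tau^{-\gamma}\Bigl(1+\frac{d^m}{\tau}\Bigr)^{-\Re\alpha-\gamma}\|1_E f\|_{q},
$$
where $(q,r)=(q,2)$ or $(2,q)$ according to the case. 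Multiplying by $\tau^{-1}$ converts this into precisely the $L^q$-$L^2$ (resp.\ $L^2$-$L^q$) decay of order $M_q=\Re\alpha+\gamma$ and homogeneity $m$ demanded in Definitions \ref{def:qroff}--\ref{def:SIOq}. Combining the three verifications gives $\mM_\alpha\in\mT^+_{m,q,M_q}$.

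The only real subtlety, and the step I would check most carefully, is the passage from the hypothesis on the resolvents \eqref{equ:OD resolvent L2} to the off-diagonal estimate for $((t-s)L)^\alpha e^{-(t-s)L}$ at the pair $(q,r)$ we need: this is exactly what Proposition \ref{prop:qr} delivers, provided $\Re\alpha$ ranges in a bounded interval $[\epsilon,R]\subset(0,\infty)$, which is the natural domain on which $\mM_\alpha$ is considered. No new estimates are required beyond what is already available.
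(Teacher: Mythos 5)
Your proof is correct and follows essentially the same route as the paper: membership in $\mT^+$ via Proposition \ref{prop:malpha}, and the kernel decay via Proposition \ref{prop:qr} applied to $((t-s)L)^\alpha e^{-(t-s)L}$, noting $K(t,s)=(t-s)^{-1}((t-s)L)^\alpha e^{-(t-s)L}$. The additional details you supply (the $q=r=2$ size bound and the verification of the integral representation) only make explicit what the paper leaves implicit.
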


{ Observe that the order of decay becomes a function of $q$, hence the notation $M_{q}$. $M_{q}$ increases as $q$ moves away from $2$: this is the interesting point for us.
As mentioned in the introduction, $M_{2}=1$ is best possible for the Poisson semigroup of $-\Delta$,  so it seems one cannot improve this conclusion. }

\begin{proof}
The fact that $\mM_\alpha \in \mT^+$  is contained in  Proposition \ref{prop:malpha}. The decay of the kernel $(t-s)^{\alpha - 1} L^\alpha e^{-(t-s)L}$ with $s<t$ is clear from Proposition \ref{prop:qr}.
\end{proof}

\begin{corollary}\label{cor:MLtp2} Let $H=L^2(\R^n)$.

A] Assume (H2) holds for $(q,2) $ with $q\le 2$.
 Then $\mM_L$ extends to a bounded operator on $T^{p,2,m}(t^{\beta}dt dy)$ for $p_{L}<p<2$ with $p_{L}$ calculated as follows:
 \begin{enumerate}
  \item If $\frac{n}{mq'}<1$ and $\beta\le -1$, $p_{L}=p_{M_{q}}$.
  \item If $\frac{n}{mq'}<1$ and $-1<\beta<1$, $p_{L}=\inf (\tilde p_{c}, p_{c})$.
 \item  If $\frac{n}{mq'}\ge 1$ then $\frac{1}{p_{L}}- \frac{1}{2}= \frac{mq'}{n}(\frac{1}{\inf(p_{c},1)}-\frac{1}{2})$.
\end{enumerate}

B] Assume  (H2) holds for $(2,q) $ with $q\ge 2$.
 Then $\mM_L$ extends to a bounded operator on $T^{p,2,m}(t^{\beta}dt dy)$ for $2<p<p_{L}$ with $p_{L}= \frac{2n}{n-mq}$ if $mq\le n$ and for $2<p\le \infty$ if $mq>n$.
 \end{corollary}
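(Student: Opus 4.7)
The plan is to combine Corollary \ref{cor:ML} with Theorem \ref{thm:lql2abs} (for Part A) and Proposition \ref{prop:adjointdual} (for Part B), applied not to $\mM_L = \mM_1$ alone but to the entire analytic family $\{\mM_\alpha\}_{\Re e\, \alpha > 0}$, and then to bridge the missing ranges by Stein's interpolation theorem on tent spaces (as in \cite{htv}). By Corollary \ref{cor:ML}, $\mM_\alpha \in \mT^+_{m,q,M_q(\alpha)}$ with $M_q(\alpha) = \Re e\, \alpha + \frac{n}{m}|\frac{1}{q}-\frac{1}{2}|$, so the decay order grows linearly with $\Re e\, \alpha$. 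Moreover, Proposition \ref{prop:malpha} combined with Theorem \ref{AAabs} yields boundedness of $\mM_\alpha$ on $T^{2,2,m}(t^\beta dt dy)$ for every $\beta < 1$ and $\Re e\, \alpha > 0$, with operator norm growing at most like $e^{\nu|\Im m\, \alpha|}$, which is admissible growth for Stein's analytic interpolation.

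For Part A, the hypothesis of Theorem \ref{thm:lql2abs} applied to $\mM_\alpha$ reads $M_q(\alpha) > \frac{n}{2m}$, i.e.\ $\Re e\, \alpha > \frac{n}{mq'}$. In cases (1) and (2) this holds at $\alpha = 1$, so I would apply Theorem \ref{thm:lql2abs} directly at $\alpha = 1$, identify its active branch according to the sign of $\frac{n}{mq'} - \frac{1-\beta}{2}$, and compare the exponents $p_c, \tilde p_c, p_{M_q(1)}$ by elementary algebra. The sign of $1+\beta$ controls whether $\sup(p_{M_q(1)},\tilde p_c)$ equals $p_{M_q(1)}$ or $\tilde p_c$, while the factor $(q'-2)(2n-m(1-\beta)q')$ controls the order between $\tilde p_c$ and $p_c$; this reproduces $p_L = p_{M_q(1)}$ when $\beta \leq -1$, and $p_L = \inf(\tilde p_c, p_c)$ when $-1 < \beta < 1$, the latter possibly requiring one further Stein interpolation in $\alpha$ to cross between the two branches of Theorem \ref{thm:lql2abs}. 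In case (3), Theorem \ref{thm:lql2abs} fails at $\alpha = 1$, so I would apply it at $\Re e\, \alpha$ slightly above $\frac{n}{mq'}$ to get $T^{p_1,2,m}(t^\beta dt dy)$-boundedness of $\mM_\alpha$ with $p_1 = \inf(p_c,1)$, then Stein-interpolate with the $T^{2,2,m}$ bound at $\Re e\, \alpha \searrow 0$; the interpolation weight $\theta = \frac{mq'}{n}$ produces the announced formula $\frac{1}{p_L}-\frac{1}{2} = \frac{mq'}{n}\bigl(\frac{1}{\inf(p_c,1)}-\frac{1}{2}\bigr)$.

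Part B is entirely analogous, with Proposition \ref{prop:adjointdual} replacing Theorem \ref{thm:lql2abs}: the required inequality $M_q(\alpha) > \frac{n}{2m}$ becomes $\Re e\, \alpha > \frac{n}{mq}$. When $mq > n$, this holds at $\alpha = 1$, so Proposition \ref{prop:adjointdual} gives boundedness of $\mM_L$ on $T^{p,2,m}(t^\beta dt dy)$ for all $2 \leq p \leq \infty$. When $mq \leq n$, I would apply Proposition \ref{prop:adjointdual} at $\Re e\, \alpha > \frac{n}{mq}$ to obtain the $T^{\infty,2,m}$ bound for $\mM_\alpha$, then Stein-interpolate with the $T^{2,2,m}$ bound at $\Re e\, \alpha \searrow 0$; the identity $1 = \theta \cdot \frac{n}{mq}$ forces $\theta = \frac{mq}{n}$, so $\frac{1}{p_L} = \frac{1-\theta}{2} = \frac{n-mq}{2n}$ and $p_L = \frac{2n}{n-mq}$.

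The main obstacle will be the bookkeeping in Part A: the delicate algebraic comparison of $p_c,\tilde p_c, p_{M_q(1)}$ in case (2), and the decision of where to stop the Stein interpolation---at $p=1$, as required by the tent space version in \cite{htv}, or below it via the atomic method of Lemma \ref{lem:atomic} when $p_c < 1$---a dichotomy recorded precisely by the expression $\inf(p_c,1)$ in the formula of case (3).
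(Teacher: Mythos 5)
Your proposal follows the paper's proof essentially step for step: Corollary \ref{cor:ML} places $\mM_\alpha$ in $\mT^+_{m,q,M_q(\alpha)}$ with $M_q(\alpha)=\Re e\,\alpha+\frac{n}{m}|\frac1q-\frac12|$; when $M_q(1)>\frac{n}{2m}$ (i.e.\ $\frac{n}{mq'}<1$ in A, $mq>n$ in B) one applies Theorem \ref{thm:lql2abs}, resp.\ Proposition \ref{prop:adjointdual}, directly to $\mM_L=\mM_1$; otherwise one applies them to $\mM_\alpha$ for $\Re e\,\alpha>\alpha_1=\frac{n}{mq'}$ (resp.\ $\frac{n}{mq}$) and Stein-interpolates against the $T^{2,2,m}(t^\beta dtdy)$ bound of Proposition \ref{prop:malpha} near $\Re e\,\alpha=0$, with exactly the weights $\theta=\frac{mq'}{n}$, resp.\ $\frac{mq}{n}$, that you compute. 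Your algebraic comparisons of $p_c$, $\tilde p_c$, $p_{M_q(1)}$ are correct and in fact more explicit than the paper, whose entire argument for A(1)--(2) is the sentence ``Cases (1) and (2) thus follow from Theorem \ref{thm:lql2abs}.''

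The one step you should delete is the ``further Stein interpolation in $\alpha$ to cross between the two branches'' in case A(2): it is neither used in the paper nor possible. The branch of Theorem \ref{thm:lql2abs} is decided by the sign of $\frac{1-\beta}{2}-\frac{n}{mq'}$, which does not involve $\alpha$; increasing $\Re e\,\alpha$ only lowers $p_{M_q(\alpha)}$ and leaves $p_c$ and $\tilde p_c$ fixed, so no interpolation along the family can reach the exponent of the inactive branch. Direct application at $\alpha=1$ therefore yields the exponent of whichever branch is active, and by your own sign computation with $(q'-2)(2n-m(1-\beta)q')$ that exponent is $\sup(\tilde p_c,p_c)$, not $\inf(\tilde p_c,p_c)$. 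This is also how the corollary is actually used downstream (for $n=3$, $m=2$, $\beta=0$, $q'\to 6$ in Proposition \ref{prop:divagrad} one has $\tilde p_c=6/7$ and $p_c=2/3$, and the lower bound quoted there is $6/7-\varepsilon$), so the mismatch sits in the statement's ``$\inf$'' rather than in your method: record $p_L$ as the exponent of the applicable branch of Theorem \ref{thm:lql2abs} and stop there.
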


 Note that the result for $p\ge 2$ does not depend on $\beta$. The exponents $p_{M_{q}}$, $\tilde p_{c}$, $p_{c}$ are explicitely defined in Theorem \ref{thm:lql2abs}.
 The last two depend on $\beta$.

 \begin{proof}
 A] The condition $\frac{n}{mq'}<1$ is equivalent to $M_{q}=1+ \frac{n}{m}(\frac{1}{q}-\frac 1  2) >\frac{n}{2m}$. Cases (1) and (2) thus follow from
 Theorem \ref{thm:lql2abs}. In the third case, Theorem \ref{thm:lql2abs} does not apply to $\mM_L$ but to
$\mM_{\alpha}$ for any $\alpha$ with $ \Re e\, \alpha  >\alpha_{1}$ and  $\alpha_{1}+ \frac{n}{m}(\frac{1}{q}-\frac 1  2) =\frac{n}{2m}$ which implies that $\mM_{\alpha}$ is bounded for $\inf (p_{c},1)<p<2$. At the same time, $\mM_{\alpha}$ is bounded for $p=2$ when $\Re e\, \alpha  >0$. The third case follows by complex interpolation {for the analytic family $\mM_{\alpha}$} (since the growth in $\ \Im m\, \alpha $ is admissible) in tent spaces.

 B] The condition $mq>n$ means $M_{q}=1+ \frac{n}{m}(\frac{1}{2}-\frac 1  q) >\frac{n}{2m}$.  So we apply Proposition \ref{prop:adjointdual} to $\mM_{L}$. If $mq\le n$, then we apply  this result   not to  $\mM_{L}$ but to $\mM_{\alpha}$ for $ \Re e\, \alpha  >\alpha_{1}$ and $\alpha_{1}+ \frac{n}{m}(\frac{1}{2}-\frac 1  q) =\frac{n}{2m}$ and the $p=2$ result for $\Re e\,\alpha>0$ and conclude by interpolation { for analytic families} again.
\end{proof}

{
\begin{proof}
[Proof of Propositions \ref{prop:divagrad} and \ref{prop:sqrtdivagrad}] Write $L=-{\rm div} \,A\nabla$.  We have that $(e^{-tL})_{t\ge 0}$ satisfies pointwise Gaussian estimates  if $n=1,2$.  Hence the conclusion  of the first part of Proposition \ref{cor:Laplace} applies.
For $n\ge 3$,  let $1\le p_{-}(L) < p_{+}(L)\le \infty$ be the numbers introduced  in \cite{memoir} such that for  $p_{-}(L)<q\le r <p_{+}(L)$, $(e^{-tL})_{t\ge 0}$ satisfies $L^q-L^r$ off-diagonal estimates with homogeneity $m=2$. As the decay is Gaussian, the order is $\infty$.  Moreover, $p_{-}(L) < \frac{2n}{n+2}$, $p_{+}(L)>\frac{2n}{n-2}$ and,
by \cite{hmm},  this is sharp for this class of complex operators.  Taking  $q<\frac{2n}{n+2}$, we use the {second} item  in Corollary \ref{cor:MLtp2}, A]  when $n=3,4$ and the third one when $n\ge 5$ to get the lower bound on $p$. For the upper bound $p=\infty$ included, we use B].

Now for the  semigroup associated to $\sqrt L$. { When $n=1$ or 2,} we have the pointwise Poisson kernel estimate, hence $L^1-L^2$ and $L^2-L^\infty$ off-diagonal estimates with order  $ \frac{n}{2}+1$ and homogeneity $m=1$.   Hence the conclusion  of the second part in Proposition \ref{cor:Laplace} applies since $m=1$ and $\beta=-1$.
For $n\ge 3$, with the same numbers $p_{-}(L), p_{+}(L)$ as above, the resolvent estimate (H2) holds with $m=1$ and $p_{-}(L)<q\le r <p_{+}(L)$. Taking  $q<\frac{2n}{n+2}$, we use the first  item  in Corollary \ref{cor:MLtp2}, A]  when $n=3, 4$ and the third one when $n\ge 5$ to get the lower bound on $p$. For the upper bound, we use B] with $q> \frac{2n}{n-2}$ and find $\infty$ included if $n=3,4$, and the proposed value if $n\ge 5$.
\end{proof}
}

\begin{corollary} Let $H=L^2(\R^n)$.

A] Assume (H2) holds for $(2,q) $ with $ 2\le q$.
 Then $\mM_L^-$ extends to a bounded operator on $T^{p,2,m}(t^{\beta}dt dy)$ for $2<p<p_{L}$ with $p_{L}$ calculated as follows:
 \begin{enumerate}
  \item If $\frac{n}{mq}<1$ and $\beta\ge 1$, $p_{L}=\infty$ (and boundedness holds at $\infty$).
  \item If $\frac{n}{mq}<1$ and $-1<\beta<1$, {$p_{L}= \infty$ (and boundedness holds at $p=\infty$) if $p_{c}<1$ and $p_{L}= p'_{c}$ if $p_{c}\ge 1$.}
 \item  If $\frac{n}{mq}\ge 1$ then $\frac{1}{p_{L}}- \frac{1}{2}= \frac{mq}{n}(-\frac{1}{2})=  -\frac{mq}{2n}$.
\end{enumerate}

B] Assume  (H2) holds for $(q,2) $ with $q\le 2$.
 Then $\mM_L^-$ extends to a bounded operator on $T^{p,2,m}(t^{\beta}dt dy)$ for $p_{L}<p<2$ with $p_{L}= \frac{2n}{n+mq'}$ if $mq'\le n$ and  $p_{L}=p_{M_{q}}$ if $mq'>n$.
 \end{corollary}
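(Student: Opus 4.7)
The plan is to follow the proof of Corollary \ref{cor:MLtp2} \emph{mutatis mutandis}, replacing the $\mT^+$-tools (Theorem \ref{thm:lql2abs} and Proposition \ref{prop:adjointdual}) by the corresponding $\mT^-$-tools (Theorem \ref{thm:lql2absdual} and Proposition \ref{prop:adjoint}). The starting observation is that, by the same argument as in Corollary \ref{cor:ML} (the backward kernel $(s-t)^{\alpha-1}L^{\alpha}e^{-(s-t)L}$, $s>t$, satisfies the same pointwise and off-diagonal bounds as its forward counterpart), one has $\mM_{\alpha}^- \in \mT^-_{m,q,\,\Re e\,\alpha + \frac{n}{m}|\frac{1}{q}-\frac{1}{2}|}$ under the relevant (H2) hypothesis; in particular $\mM_L^- = \mM_1^- \in \mT^-_{m,q,M_q}$ with $M_q = 1 + \frac{n}{m}|\frac{1}{q}-\frac{1}{2}|$.

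For Part A, the condition $\frac{n}{mq}<1$ appearing in Cases (1) and (2) is equivalent to $M_q > \frac{n}{2m}$, which puts $\mM_L^-$ in the scope of Theorem \ref{thm:lql2absdual}. In Case (1), $\beta \ge 1$ forces $\frac{2n}{m(1-\beta)} \le 0 < q$, so case (2) of the theorem applies and yields boundedness through $p=\infty$. In Case (2), either branch of Theorem \ref{thm:lql2absdual} applies, according to whether $p_c < 1$ or $p_c \ge 1$. For Case (3), $\frac{n}{mq} \ge 1$ prevents direct application to $\mM_L^-$, so we embed into the analytic family $\mM_\alpha^-$: for $\Re e\,\alpha > \alpha_1 := \frac{n}{mq}$ we have $M>\frac{n}{2m}$, and Theorem \ref{thm:lql2absdual} supplies an endpoint bound. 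Combined with the $p=2$ bound valid for all $\Re e\,\alpha > 0$ (from Theorem \ref{AAabs} applied to $\mM_\alpha^- \in \mT^-$), Stein's complex interpolation theorem for analytic families in tent spaces \cite{htv}, with admissible $\Im m\,\alpha$-growth granted by Proposition \ref{prop:malpha}, applied at $\alpha = 1$ (with interpolation weight $\theta = \frac{mq}{n}$) yields the stated $p_L = \frac{2n}{n-mq}$.

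Part B is handled by the same scheme, with Proposition \ref{prop:adjoint} in place of Theorem \ref{thm:lql2absdual}. When $mq' > n$, we have $M_q > \frac{n}{2m}$ and Proposition \ref{prop:adjoint} applies to $\mM_L^-$ directly, delivering $p_L = p_{M_q}$. When $mq' \le n$, we use the family $\mM_\alpha^-$ with $\alpha_1 := \frac{n}{mq'}$: Proposition \ref{prop:adjoint} for $\Re e\,\alpha > \alpha_1$ gives an endpoint bound at $p$ arbitrarily close to $1$ from above, and Stein interpolation at $\alpha = 1$ (weight $\theta = \frac{mq'}{n}$) against the $L^2$ endpoint at $\Re e\,\alpha > 0$ produces the formula $p_L = \frac{2n}{n+mq'}$. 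The only genuinely delicate step in the whole argument is the Stein interpolation in Case (3) of Part A and its Part B analogue, which amounts to a verbatim transcription of the corresponding step in the proof of Corollary \ref{cor:MLtp2}.
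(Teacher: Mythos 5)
Your overall route is exactly the paper's: the authors' own proof of this corollary is a three-line sketch referring to Proposition \ref{prop:adjoint}, Theorem \ref{thm:lql2absdual}, the family $\mM_\alpha^-$, and the interpolation procedure of Corollary \ref{cor:MLtp2}, and your write-up is a faithful expansion of that sketch. Part B and Part A cases (1)--(2) check out as you present them (with the caveat that in A(1) you read the case-(2) hypothesis of Theorem \ref{thm:lql2absdual} literally as $q>\frac{2n}{m(1-\beta)}$ with a nonpositive right-hand side; the displayed ``equivalent'' inequality $-\frac{\beta-1}{2}+\frac{n}{m}(\frac12-\frac1q)>\frac{n}{2m}$ actually fails for $\beta\ge 1$, and at $\beta=1$ even the literal form is vacuous, so this step rests on which of the two non-equivalent formulations one takes as the hypothesis).

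The step that does not go through as written is Case (3) of Part A. Your interpolation with weight $\theta=\frac{mq}{n}$ produces $\frac1{p_L}=\frac{1-\theta}{2}=\frac{n-mq}{2n}$ only if the endpoint supplied by Theorem \ref{thm:lql2absdual} at $\Re e\,\alpha>\alpha_1$ is $p=\infty$. But the case dichotomy in that theorem depends only on $(q,m,n,\beta)$ and not on the order of decay $M$: raising $\Re e\,\alpha$ increases $M_\alpha$ but cannot move you from case (1) to case (2), and the $p=\infty$ conclusion is available only in case (2). Under the hypothesis $\frac{n}{mq}\ge 1$ of Case (3), the case-(2) condition $\frac{n}{mq}<\frac{1-\beta}{2}$ forces $\beta<-1$, which is excluded; so for the admissible $\beta$ one is in case (1), the endpoint is $p_c'<\infty$, and the interpolation yields a strictly smaller $p_L$ than $\frac{2n}{n-mq}$ (the analogue of the $\inf(p_c,1)$ appearing in Corollary \ref{cor:MLtp2} A](3) should appear here). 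This is precisely the point where the transcription from Corollary \ref{cor:MLtp2} is \emph{not} verbatim: in the forward case the $p=\infty$ endpoint comes from Proposition \ref{prop:adjointdual}, which holds unconditionally for all $\beta<1$ once $M>\frac{n}{2m}$, whereas the backward $p=\infty$ result of Theorem \ref{thm:lql2absdual} is conditional. To be fair, this gap is inherited from the paper's own sketch and arguably from the statement of Case A(3) itself, but your proof should either restrict to the parameter range where the $\infty$-endpoint is genuinely available or replace $\infty$ by the correct case-dependent endpoint before interpolating.
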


This time, this follows from Proposition \ref{prop:adjoint} and Theorem \ref{thm:lql2absdual} where one finds the value of $p_{c}$, using the operators
\[ \mM_\alpha^- f (t) = \int\limits_t^\infty (s-t)^{\alpha - 1} L^\alpha e^{-(s-t)L}f(s)\,  ds\]
and the interpolation procedure of Corollary  { \ref{cor:MLtp2}}. Details are left to the reader.

{\flushleft{\sc Pascal Auscher}}\\
Univ. Paris-Sud, laboratoire de Math\'ematiques, UMR 8628, F-91405 {\sc Orsay}; CNRS, F-91405 {\sc Orsay}. \\
{\tt pascal.auscher@math.u-psud.fr}\\

{\flushleft{\sc Christoph Kriegler}}\\
Laboratoire de Math\'ematiques (UMR 6620),
Universit\'e Blaise-Pascal (Clermont-Ferrand 2),
Campus des C\'ezeaux,
63177 Aubi\`ere Cedex\\
{\tt christoph.kriegler@math.univ-bpclermont.fr}\\

{\flushleft{\sc Sylvie Monniaux}}\\
LATP-UMR 7353, FST Saint-J\'er\^ome,
Univ. Aix-Marseille, F-13397 {\sc Marseille} C\'edex 20.\\
{\tt sylvie.monniaux@univ-amu.fr}\\

{\flushleft{\sc Pierre Portal}}\\
Permanent Address:\\
Universit\'e Lille 1, Laboratoire Paul Painlev\'e, F-59655 {\sc Villeneuve d'Ascq}.\\
Current Address:\\
Australian National University, Mathematical Sciences Institute, John Dedman Building,
Acton ACT 0200, Australia.\\
{\tt pierre.portal@math.univ-lille1.fr}\\

\end{document}